\crefname{equation}{}{}
\crefname{enumi}{}{}
\crefname{figure}{Figure}{Figure}
\crefname{subsection}{Subsection}{Subsections}
\crefname{lemma}{Lemma}{Lemma}
\crefname{theorem}{Theorem}{Theorem}
\crefname{proposition}{Proposition}{Proposition}
\crefname{section}{Section}{Section}
\newtheorem{theorem}{Theorem}[section]
\newtheorem{lemma}[theorem]{Lemma}
\theoremstyle{definition}
\theoremstyle{remark}
\newtheorem{remark}[theorem]{Remark}
\numberwithin{equation}{section}
\numberwithin{figure}{section}
\newcommand{\bs}[1]{\boldsymbol{#1}}
\newcommand{\bb}[1]{{\mbox{\sffamily \bfseries #1}}}
\newcommand{\oname}[1]{\textrm{#1}}
\newcommand{\abs}[1]{\left|#1\right|}
\newcommand{\nrm}[1]{\left|\left|#1\right|\right|}
\newcommand{\eqdef}{\stackrel{\mathrm{def}}{=\joinrel=}}
\newcommand{\pp}[2]{\frac{\partial {#1}}{\partial {#2}}}
\newcommand{\phf}[1]{{{#1}+1/2}}
\newcommand{\phfg}[2]{{{#1}+{#2}/2}}
\newcommand{\mhf}[1]{{{#1}-1/2}}
\newcommand{\mhfg}[2]{{{#1}-{#2}/2}}
\newcommand{\cfl}{{\textrm{cfl}}}
\newcommand{\ave}{\textrm{ave}}
\begin{document}

\title[HV Method for Euler Equations]{An Explicit Fourth-Order Hybrid-Variable Method for Euler Equations with A Residual-Consistent Viscosity}

\author[X.~Zeng]{Xianyi Zeng}
\address{Department of Mathematics, Lehigh University, Bethlehem, PA 18015, United States.}
\email[Corresponding author, X.~Zeng]{xyzeng@lehigh.edu}

\date{\today}

\subjclass[2010]{65M06 \and 65M08 \and 65M22 \and 35Q31}
%\subjclass[2020]{65M06 \and 65M08 \and 65M22 \and 35Q31}

\keywords{
  Hybrid-variable method;
  Superconvergence;
  High-order accuracy;
  Entropy viscosity;
  Euler equations.
}

\begin{abstract}
%My abstract: 150 -- 250 words.
In this paper we present a formally fourth-order accurate hybrid-variable method for the Euler equations in the context of method of lines.
The hybrid-variable (HV) method seeks numerical approximations to both cell-averages and nodal solutions and evolves them in time simultaneously; and it is proved in previous work that these methods are inherent superconvergent.
Taking advantage of the superconvergence, the method is built on a third-order discrete differential operator, which approximates the first spatial derivative at each grid point, only using the information in the two neighboring cells.
Stability and accuracy analyses are conducted in the one-dimensional case for the linear advection equation; whereas extension to nonlinear systems including the Euler equations is achieved using characteristic decomposition and the incorporation of a residual-consistent viscosity to capture strong discontinuities.
Extensive numerical tests are presented to assess the numerical performance of the method for both 1D and 2D problems.
\end{abstract}

\maketitle

\section{Introduction}
\label{sec:intro}
Numerical methods for hyperbolic conservation laws in general and the Euler equations in particular have been a focal research area in the past decades. 
It has always been a challenging task, because discontinuity usually develops even with smooth initial data, and the correct speed for the shock wave can only be captured if the method is conservative.
To this end, methods that are second-order in region where solutions are smooth and first-order near discontinuities~\cite{JPBoris:1976a,BvanLeer:1979a,STZalesak:1979a,FShakib:1991a} have been extremely popular in both academic and industrial applications related to fluid mechanics.

In recent years, there has been growing interests in constructing higher-order schemes for Euler equations, at least when the solutions are smooth, while maintaining the numerical conservation and being robust near discontinuities.
Examples include, but are not limited to, the class of weighted essentially non-oscillatory (WENO) methods~\cite{GSJiang:1996a} and their variants, the discrete Galerkin (DG) schemes~\cite{BCockburn:1998a} and the more recent hybridizable discrete Galerkin (HDG) methods~\cite{NCNguyen:2012a}, as well as the high-order residual distribution schemes~\cite{RAbgrall:2019a}.
High-order schemes are especially preferred over lower-order ones due to their ability of resolving small or sub-grid scale features in the flow, such as those due to the interaction between shock waves and/or contact discontinuities.

High-order methods generally demand more computation costs than lower-order ones.
For example, WENO type methods make use of multiple large stencils to achieve high-order accuracy and numerical stability, which usually ends in large communication cost in parallel computations.
The DG type schemes, in contrast, employ local high-order polynomial reconstruction and thus communication is only required between adjacent mesh cells; however, the localization of discrete operators is usually computationally expensive in the sense that more degrees of freedom are required to achieve the same order of accuracy. %comparing to WENO or finite volume methods.
To get an idea, let us consider a one-dimensional (1D) grid with $N$ uniform cells for scalar problems, then a $p$-th order finite-difference or finite-volume scheme requires $N+1$ or $N$ unknowns, respectively, whereas the discretization of spatial derivative $\partial_x$ requires a stencil of at least $p$ or $p+1$ consecutive cells. 
For a DG type scheme, however, most computations are localized to stay inside each cell, but in order to achieve $p$-th order accuracy one requires $(p+1)$ unknowns per cell, which results in a total of $N(p+1)$ unknowns.

In this work, we attempt to seek a balance between using a larger stencil and using more degrees of freedom per mesh cell to achieve higher-order accuracy for Euler equations; and the methodology extends to other hyperbolic conservation laws naturally.
Particularly, the proposed method is based on the recently developed inherently superconvergent hybrid-variable (HV) discretization framework~\cite{XZeng:2019a}, which seeks numerical approximations to both nodal solutions and cell-averaged solutions and evolves them in time using the method of lines.
In this work we restrict the stencil of each discretization operator to two adjacent cells (in the 1D case), and utilize the superconvergence of HV schemes to achieve the highest possible order of accuracy.
Using the same 1D example as before, the proposed HV scheme will have $2N+1$ unknowns ($N+1$ nodal approximations and $N$ cell averages); and it approximates $\partial_x$ with third-order accuracy: in two neighboring cells, there are five unknowns ($3$ nodal values and $2$ cell-averaged ones), and four in the upwind-biased stencil will be used to construct a discrete differential operator (DDO) that approximates $\partial_x$.
Due to the superconvergence property, the third-order DDO leads to a HV scheme that is fourth-order accurate in space, see the general theory in~\cite{XZeng:2019a} and also the accuracy, stability, and convergence proof in~\cref{sec:anal}.

The DDO itself is a Hermite type operator and it leads to a linearly stable method due to the choice of upwind-biased stencil. 
Although it is reported in the literature that Hermite type methods seem to provide better total variation control comparing to methods that are based on Lagrange interpolation, its mechanism is never fully understood~\cite{JGoodrich:2006a,DAppelo:2012a}. 
To this end, we employ the classical idea of artificial viscosity as a discontinuity capturing operator to enhance the robustness of the method near shocks.
In particular, we develop a residual-consistent viscosity that is inspired by the ones utilizing an entropy residual~\cite{CJohnson:1990a,JLGuermond:2014a,JLGuermond:2016a,VStiernstrom:2021a}, so that the formal fourth-order of accuracy is maintained in smooth-solution region and it reduces to the classical von Neumann-Richtmyer viscosity~\cite{JVonNeumann:1950a} near strong discontinuities.

Lastly, we note that although preliminary numerical results showed without proof that the superconvergence property is valid on both structured and unstructured grids~\cite{XZeng:2014a}, this work will assume Cartesian grids for simplicity; extension of the methodology to simplicial grids and its analysis will be addressed in future work.

The remainder of this paper is organized as follows. 
The proposed fourth-order HV method is described for one-dimensional problems in~\cref{sec:hv}, and accuracy and stability analysis are provided in~\cref{sec:anal}.
In particular, we propose a concept called {\it essential order of accuracy} and utilize it to prove the convergence of the proposed HV scheme (\cref{thm:anal_conv}).
The one-dimensional method for Cauchy problems is completed with the construction of an artificial viscosity that is consistent with the entropy residual, which is employed to detect discontinuities without hurting the formal order of accuracy in smooth-solution regions.
In~\cref{sec:ext}, we generalize the methodology to boundary conditions other than the periodic ones as well as two-dimensional Cartesian grids.
Extensive 1D and 2D benchmark tests are offered in~\cref{sec:num} to assess the numerical performance of the current work.
Finally,~\cref{sec:concl} provides a summary of the proposed method.

\section{A fourth-order HV method for Euler equations, Part I}
\label{sec:hv}
We consider the Euler equations:
\begin{equation}\label{eq:hv_euler}
  \bs{W}_t + \nabla\cdot\bs{F}(\bs{W}) = \bs{0}\;,\quad(\bs{x},\;t)\in\Omega\times[0,\;T]\;,
\end{equation}
where $\Omega\subset\mathbb{R}^d$ is a bounded Lipschitz domain with piecewise smooth boundaries.
The conservative variable $\bs{W}$ and the flux function $\bs{F}$ are given by:
\begin{equation}\label{eq:hv_vars}
  \bs{W} = \begin{bmatrix}
    \rho \\ \rho \bs{v} \\ \rho E
  \end{bmatrix}\quad\textrm{ and }\quad
  \bs{F}(\bs{W}) = \begin{bmatrix}
    \rho \bs{v}^T \\ \rho \bs{v}\bs{v}^T + p\bs{I}_d \\ (\rho E+p)\bs{v}^T
  \end{bmatrix}\;,
\end{equation}
where $\rho$, $\bs{v}$, and $E$ are density, velocity, and specific total energy, respectively; and $\bs{I}_d\in\mathbb{R}^{d\times d}$ is the identity matrix.
The pressure $p$ is determined from other thermodynamic variables including the density and the specific internal energy $e=E-\bs{v}\cdot\bs{v}/2$ by the equation of state:
\begin{equation}\label{eq:hv_eos}
  p = p(\rho,\;e)\;.
\end{equation}
In this work, we assume polytropic gas where $p=(\gamma-1)\rho e$ with $\gamma$ being the specific heat capacity ratio.
The speed of sound $c_s>0$ is defined as $c_s^2 = \frac{\partial p}{\partial \rho}\Big|_s$, where $s$ is the specific entropy that is defined here as:
\begin{equation}\label{eq:hv_etp}
  s = p/\rho^\gamma\;,
  %s = -\log(p/\rho^\gamma)\;,
\end{equation}
and thus the speed of sound is given by:
\begin{equation}\label{eq:hv_sps}
  c_s = \sqrt{\frac{\gamma p}{\rho}}\;.
\end{equation}
Later we shall also use the conservative entropy $S=\rho s=p/\rho^{\gamma-1}$ in the construction of a residual-consistent artificial viscosity.
%Note that we adopt the specific entropy definition of~\cite{AHarten:1983a}, which corresponds to the entropy $S=\rho s=p/\rho^{\gamma-1}$ that symmetrizes the hyperbolic equation.
%Note that we adopt the specific entropy definition of~\cite{AHarten:1983a}, which corresponds to the entropy $S=\rho s=-\rho\log(p/\rho^\gamma)$ that symmetrizes the hyperbolic equation.
%The methodology, however, can be easily extended to other equations of state such as the stiffened gases. 
In this section, we present the core components of the HV method in the context of the Cauchy problem for the one-dimensional (1D) Euler equation.
Then the analysis of the method is provided in~\cref{sec:anal} and other technical issues including non-periodic boundary conditions and extension to two-dimensional (2D) problems are presented in~\cref{sec:ext}.

\subsection{A fourth-order HV method in 1D}
\label{sec:hv_1d}
In the 1D case, let us consider $\Omega=[0,\;L]$ and a grid with $N$ uniform cells $\Omega_j^{j+1}=[x_j\;,x_{j+1}]$, where $0\le j\le N-1$.
Particularly, the grid points are given by $x_j=jh$, $0\le j\le N$, where $h=L/N$; and the cell centers are denoted $x_{\phf{j}}=(x_j+x_{j+1})/2$, $0\le j\le N-1$.
At the semi-discretization level, the HV methods seek approximations to both nodal solutions $\bs{W}_j(t)\approx\bs{W}(x_j,\;t)$ and cell-averaged solutions $\overline{\bs{W}}_{\phf{j}}(t)\approx\frac{1}{h}\int_{x_j}^{x_{j+1}}\bs{W}(x\;,t)dx$.

The ODE for updating $\overline{\bs{W}}_{\phf{j}}$ can be derived naturally by integrating the governing equation on $\Omega_j^{j+1}$:
\begin{equation}\label{eq:hv_1d_cell}
  \overline{\bs{W}}_{\phf{j}}' + \frac{1}{h}\left(\bs{F}(\bs{W}_{j+1})-\bs{F}(\bs{W}_j)\right) = \bs{0}\;;
\end{equation}
the ODE for updating $\bs{W}_j$, however, is obtained by linearizing the flux term and a characteristic decomposition:
\begin{equation}\label{eq:hv_1d_node}
  \bs{W}_j' + \bs{R}_j\bs{\Lambda}_j[\mathcal{D}_x(\bs{R}_j^{-1}\bs{W})]_j = \bs{0}\;,
\end{equation}
where $\bs{J}_j = \bs{R}_j\bs{\Lambda}_j\bs{R}_j^{-1}$ is the eigenvalue decomposition of the Jacobian matrix $\bs{J}_j\eqdef\pp{\bs{F}(\bs{W}_j)}{\bs{W}}$.
In the case of Euler equations, they can be derived in closed form thus no numerical eigenvalue decomposition is required.
The discrete differential operator (DDO) $[\mathcal{D}_x\bullet]_j$ designates an HV approximation to the spatial derivative at $x_j$ that utilizes nearby cell-averaged and nodal solutions; it is to be specified later.

In~\cref{eq:hv_1d_node}, the chosen DDO is applied to each characteristics as some sort of upwinding in its stencil is required for stability.
In this work, we consider the shortest stencil in the construction of $[\mathcal{D}_x]_j$, i.e., using information located in the interval $[x_{j-1},\;x_{j+1}]$.
For illustration, let $w(x)$ be a sufficiently smooth function with $w_j\approx w(x_j)$ and $\overline{w}_{\phf{j}}\approx\frac{1}{h}\int_{x_j}^{x_{j+1}}w(x)dx$, then we construct $[\mathcal{D}_xw]_j$ using the three nodal values $w_{j-1}$, $w_j$, and $w_{j+1}$, and two cell-averaged values $\overline{w}_{\mhf{j}}$ and $\overline{w}_{\phf{j}}$.
In particular, we define the leftward biased operator $[\mathcal{D}_x^-]$ and the rightward biased operator $[\mathcal{D}_x^+]$ respectively as:
\begin{align}
  \label{eq:hv_1d_ddo_l}
  [\mathcal{D}_x^-w]_j &= \frac{1}{h}\left(w_{j-1}-\frac{7}{2}\overline{w}_{\mhf{j}}+2w_j+\frac{1}{2}\overline{w}_{\phf{j}}\right)\;, \\
  \label{eq:hv_1d_ddo_r}
  [\mathcal{D}_x^+w]_j &= \frac{1}{h}\left(-\frac{1}{2}\overline{w}_{\mhf{j}}-2w_j+\frac{7}{2}\overline{w}_{\phf{j}}-w_{j+1}\right)\;.
\end{align}
Then by comparing the leading terms in the Taylor series expansions one sees that both operators are {\it third-order} accurate in space in the sense that if the exact nodal and cell-averaged values are denoted by $w_j^\star$ and $\overline{w}_{\phf{j}}^\star$, respectively, then:
\begin{displaymath}
  [\mathcal{D}_x^-w^\star]_j = w'(x_j) + O(h^3)\;,\quad
  [\mathcal{D}_x^+w^\star]_j = w'(x_j) + O(h^3)\;.
\end{displaymath}

To this end, the proposed semi-discretization method (without shock capturing operator for now) is given by~\cref{eq:hv_1d_cell} and~\cref{eq:hv_1d_node} with $[\mathcal{D}_x]$ being applied to each component of $\bs{R}_j^{-1}\bs{W}$ specified as the one of the two operators~\cref{eq:hv_1d_ddo_l} and~\cref{eq:hv_1d_ddo_r}, depending on the sign of the corresponding eigenvalue.
More specifically, let us consider the construction of $[\mathcal{D}_x(\bs{R}_{j_0}^{-1}\bs{W})]_{j_0}$ for some fixed $j_0$.
Then we define the local characteristics $\bs{Y}=\bs{R}_{j_0}^{-1}\bs{W}$ and thus compute $\bs{Y}_j=\bs{R}_{j_0}^{-1}\bs{W}_j$ and $\overline{\bs{Y}}_{\phf{j}}=\bs{R}_{j_0}^{-}\overline{\bs{W}}_{\phf{j}}$ for $j$ nearby $j_0$.
Denoting the components of $\bs{Y}$ by $Y_{,k}$, $1\le k\le d+2$, and those of $\bs{Y}_j$ and $\overline{\bs{Y}}_{\phf{j}}$ by $Y_{j,k}$ and $\overline{Y}_{\phf{j},k}$, respectively, the $k$-th component of $[\mathcal{D}_x(\bs{R}_{j_0}^{-1}\bs{W})]_{j_0}$ is then defined as:
\begin{equation}\label{eq:hv_1d_ddo}
  [\mathcal{D}_x(\bs{R}_{j_0}^{-1}\bs{W})]_{j_0,k} = 
  [\mathcal{D}_xY_{,k}]_{j_0} \eqdef \left\{
  \begin{array}{lcl}
    \left[\mathcal{D}^-_xY_{,k}\right]_{j_0} & & \textrm{if}\quad \lambda_{j_0,k}>0\;, \\ \vspace*{-.05in} \\
    \left[\mathcal{D}^+_xY_{,k}\right]_{j_0} & & \textrm{if}\quad \lambda_{j_0,k}\le0\;.
  \end{array}\right.
\end{equation}
where $\lambda_{j_0,k}$ is the $k$-th diagonal entry of $\bs{\Lambda}_{j_0}$.

Although both $[\mathcal{D}^-_x]$ and $[\mathcal{D}^+_x]$ are third-order DDOs, they result in a fourth-order method.
More details will be provided in the next section for model advection equations.

\subsection{Artificial viscosity}
\label{sec:hv_av}
To capture the strong discontinuities that usually appear in nonlinear hyperbolic conservation laws, we adopt the strategy of artificial viscosity and solve the regularized equation:
\begin{equation}\label{eq:hv_av}
  \bs{W}_t + \bs{F}(\bs{W})_x - (\nu_h\bs{A}\bs{W}_x)_x = 0\;,
\end{equation}
where $\bs{A}$ is a scaling matrix and $\nu_h>0$ is the artificial viscosity that shrinks to zero as $h\to0$.
In the case of Euler equations, $\bs{A}$ is computed as:
\begin{equation}\label{eq:hv_av_scale}
  \bs{A} = \begin{bmatrix}
    1 & 0 & 0 \\ -v & 1 & 0 \\ -E & 0 & 1
  \end{bmatrix}\;,
\end{equation}
so that the total energy conservation is ensured.
While the details of computing $\nu_h$ will be left until~\cref{sec:ev}, we briefly discuss the discretization of the diffusion term here.
Integrating~\cref{eq:hv_av} over $\Omega_j^{j+1}$, one obtains an ODE for $\overline{\bs{W}}_{\phf{j}}$:
\begin{equation}\label{eq:hv_av_semi_cell}
  \overline{\bs{W}}_{\phf{j}}' + \frac{1}{h}\left(\bs{F}(\bs{W}_{j+1})-\bs{F}(\bs{W}_j)\right) - \frac{1}{h}\left[\nu_{j+1}\bs{A}_{j+1}[\mathcal{D}_x^c\bs{W}]_{j+1}-\nu_j\bs{A}_j[\mathcal{D}_x^c\bs{w}]_j\right] = \bs{0}\;.
\end{equation}
Here $\nu_j$ is the local artificial viscosity at $x_j$ and $\bs{A}_j = \bs{A}(\bs{W}_j)$; and $[\mathcal{D}_x^c\bs{W}]_j$ is a central difference operator defined as:
\begin{equation}\label{eq:hv_av_dx_c}
  [\mathcal{D}_x^c\bs{W}]_j = \frac{1}{h}\left(\overline{\bs{W}}_{\phf{j}}-\overline{\bs{W}}_{\mhf{j}}\right)\;.
\end{equation}
The ODE for updating $\bs{W}_j$ is obtained by linearizing the flux term and the diffusion term:
\begin{equation}\label{eq:hv_av_semi_node}
  \bs{W}_j' + \bs{R}_j\bs{\Lambda}_j[\mathcal{D}_x(\bs{R}_j^{-1}\bs{W})]_j - \nu_j\left[\bs{A}_j[\mathcal{D}_{xx}^c\bs{W}]_j+\left(\pp{\bs{A}(\bs{W}_j)}{\bs{W}}:[\mathcal{D}_x^c\bs{W}]_j\right)[\mathcal{D}_x^c\bs{W}]_j\right] = \bs{0}\;.
\end{equation}
Here we freeze the diffusion coefficient to $\nu_j$ for simplicity; the operator $[\mathcal{D}_{xx}^c\bs{W}]_j$ approximates $\bs{W}_{xx}$ at $x_j$ using hybrid data with a centered stencil:
\begin{equation}\label{eq:hv_av_dxx_c}
  [\mathcal{D}_{xx}^c\bs{W}]_j = \frac{1}{h^2}\left[3\overline{\bs{W}}_{\mhf{j}}-6\bs{W}_j+3\overline{\bs{W}}_{\phf{j}}\right]\;;
\end{equation}
Lastly, $\bb{T} = \partial\bs{A}(\bs{W})/\partial\bs{W}$ is a third-order tensor; in particular denoting the components of $\bs{A}$ by $A_{ab}$ and that of $\bs{W}$ by $W_c$, the components of $\bb{T}$ are $T_{abc}=\partial A_{ab}/\partial W_c$.
The operator $\bullet:\bullet$ designates the contraction between the last index of a third-order tensor and a vector.

\subsection{Time marching scheme}
\label{sec:hv_time}
To match the spatial order of accuracy, we use the classical fourth-order explicit Runge-Kutta method (RK4)~\cite{EHairer:1993a} to march the solution in time.
Specifically, let us denote the entire solution vector by $\bs{U}$ (including both nodal and cell-averaged solutions) and denote ODE system~\cref{eq:hv_av_semi_cell} and~\cref{eq:hv_av_semi_node} by $\bs{U}'+\mathcal{R}(\bs{U})=\bs{0}$.
Let $t_n$ and $t_{n+1}=t_n+\Delta t_n$ be two consecutive time steps, we update the numerical solution from $\bs{U}^n\approx\bs{U}(t_n)$ to $\bs{U}^{n+1}\approx\bs{U}(t_{n+1})$ by:
\begin{align}
  \notag
  \bs{U}^{(1)} &= \bs{U}^n - \frac{1}{2}\Delta t_n\mathcal{R}(\bs{U}^n)\;, \\
  \notag
  \bs{U}^{(2)} &= \bs{U}^n - \frac{1}{2}\Delta t_n\mathcal{R}(\bs{U}^{(1)})\;, \\
  \notag
  \bs{U}^{(3)} &= \bs{U}^n - \Delta t_n\mathcal{R}(\bs{U}^{(2)})\;, \\
  \label{eq:hv_time_rk4}
  \bs{U}^{n+1} &= \bs{U}^n - \frac{1}{6}\Delta t_n\mathcal{R}(\bs{U}^n) - \frac{1}{3}\Delta t_n\mathcal{R}(\bs{U}^{(1)}) - \frac{1}{3}\Delta t_n\mathcal{R}(\bs{U}^{(2)}) - \frac{1}{6}\Delta t_n\mathcal{R}(\bs{U}^{(3)})\;.
\end{align}
The last missing piece, the time step size, is computed following the Courant condition:
\begin{equation}\label{eq:hv_time_dt}
  \Delta t_n = \frac{\alpha_{\cfl} h}{\max\left(\max_jv_{\max}(\bs{W}_j^n),\;\max_jv_{\max}(\overline{\bs{W}}^n_{\phf{j}})\right)}\;.
\end{equation}
Here $v_{\max}(\bs{W})=\abs{v(\bs{W})} + c_s(\bs{W})$ is the maximum characteristic speed given the fluid state $\bs{W}$, with $v(\cdot)$ and $c_s(\cdot)$ being the velocity and speed of sound, respectively; and $\alpha_{\cfl}\le0.808$ is the threshold derived from linear stability analysis using the model advection equation~\cite{XZeng:2019a}.
In all computations of this work, we adopt a smaller Courant number $\alpha_{\cfl}=0.6$ to account for the extra viscosity term.

As a last remark, on the one hand, the analysis of the HV method especially the superconvergence property in the next section assumes exact initial data for both the nodal values and cell-averaged values.
On the other hand, the initial cell-averaged values are approximated using a fourth-order Gauss-Legendre quadrature rule in all numerical tests in this work.
That is, let the initial data be given analytically by $\bs{W}(x,\;0) = \bs{W}_0(x)$, and we assume $\bs{W}_0(x)$ can be computed at any $x$ using some black-box procedure, then the initial nodal values and cell-averaged value are computed by:
\begin{subequations}\label{eq:hv_time_ic}
  \begin{align}
    \label{eq:hv_time_ic_node}
    \bs{W}^0_j &= \bs{W}_0(x_j)\;, \\
    \label{eq:hv_time_ic_cell}
    \overline{\bs{W}}^0_{\phf{j}} &= \frac{1}{2}\bs{W}_0\left(x_{\phf{j}}-\frac{1}{2\sqrt{3}}h\right)+\frac{1}{2}\bs{W}_0\left(x_{\phf{j}}+\frac{1}{2\sqrt{3}}h\right)\;,
  \end{align}
\end{subequations}
whether a closed-form formula for computing $\overline{\bs{W}}$ exists or not.
%Here we use the Gauss-Legendre quadrature rule to approximate the cell integral up to fourth-order of accuracy.

\section{Stability, accuracy, and convergence analysis}
\label{sec:anal}
In this section we show the superconvergence property of the proposed semi-discretized HV method for model Cauchy problem of advection equations, as well as a convergence proof in the case of smooth solutions.
To this end, let us consider the advection equation with a constant advection velocity $\lambda>0$ and periodic boundary condition:
\begin{equation}\label{eq:anal_adv}
  \left\{\begin{array}{lcl}
    w_t + \lambda w_x = 0\;, & & (x,t)\in [0,\;L]\times[0,\;T]\;, \\ \vspace*{-.1in} \\
    w(0,t) = w(L,t)\;, & & t\in[0,\;T]\;, \\ \vspace*{-.1in} \\
    w(x,0) = w_0(x)\;, & & x\in[0,\;L]\;.
  \end{array}\right.
\end{equation}
Adopting the convention that $w_j=w_{j+N}$ and $\overline{w}_{\phf{j}}=\overline{w}_{\phf{j+N}}$, etc., the semi-discretized HV method gives rise to the ODE system of $\overline{w}_{\phf{j}}(t)$ and $w_j(t)$:
\begin{equation}\label{eq:anal_adv_ode}
  \left\{\begin{array}{lcl}
    \overline{w}'_{\phf{j}}+\frac{\lambda}{h}(w_{j+1}-w_j) = 0\;, & & \forall j\;,\quad t\in[0,\;T]\;, \\ \vspace*{-.1in} \\
    w_j'+\frac{\lambda}{h}\left(w_{j-1}-\frac{7}{2}\overline{w}_{\mhf{j}}+2w_j+\frac{1}{2}\overline{w}_{\phf{j}}\right) = 0\;, & & \forall j\;,\quad t\in[0,\;T]\;.
  \end{array}\right.
\end{equation}
First, we show the superconvergence of the method in the sense of von Neumann analysis, i.e., when the initial condition is given by a simple wave.
\begin{theorem}\label{thm:anal_sc_sw}
  Suppose $w^\star(x,t)$ is the exact solution to the problem~\cref{eq:anal_adv} with the simple wave initial condition:
  \begin{equation}\label{eq:anal_adv_sw_ic}
    w_0(x) = e^{i\kappa x}\;,
  \end{equation}
  where $\kappa=2m\pi/L$ for some interger $m$.
  Let $w_j(t)$ and $\overline{w}_{\phf{j}}(t)$ solve the ODE system~\cref{eq:anal_adv_ode} with exact initial data:
  \begin{equation}\label{eq:anal_adv_ode_ic}
    w_j(0) = w^\star(x_j,0)\;,\quad
    \overline{w}_{\phf{j}} = \frac{1}{h}\int_{x_j}^{x_{j+1}}w^\star(x,0)dx\;.
  \end{equation}
  Then there is the estimates:
  \begin{equation}\label{eq:anal_adv_est}
    \abs{w_j(T)-w^\star(x_j,T)} \le Ch^4\ \textrm{ and }\ 
    \abs{\overline{w}_{\phf{j}}(T)-\frac{1}{h}\int_{x_j}^{x_{j+1}}w^\star(x,T)dx} \le Ch^4\;,\quad
    \forall j\;,
  \end{equation}
  where $C$ is a constant that only depends on $L$, $T$, $\lambda$, and $\kappa$.
\end{theorem}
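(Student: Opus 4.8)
The plan is to carry out a von Neumann analysis on the ODE system \cref{eq:anal_adv_ode}. Since the initial data is a simple wave $w_0(x)=e^{i\kappa x}$, I would look for solutions of the form $w_j(t) = \hat w(t)\,e^{i\kappa x_j}$ and $\overline w_{\phf{j}}(t) = \hat{\overline w}(t)\,e^{i\kappa x_{\phf{j}}}$ (equivalently $\hat{\overline w}(t)e^{i\kappa x_j}$ up to the fixed phase $e^{i\kappa h/2}$). Substituting into \cref{eq:anal_adv_ode} and writing $\xi=\kappa h$, the spatial shifts $w_{j\pm1}$, $\overline w_{\phf{j}}$, $\overline w_{\mhf{j}}$ all turn into multiplication by fixed powers of $e^{i\xi}$, so the infinite ODE system collapses to a constant-coefficient $2\times2$ linear ODE $\frac{d}{dt}\begin{bmatrix}\hat w\\ \hat{\overline w}\end{bmatrix} = -\frac{\lambda}{h}M(\xi)\begin{bmatrix}\hat w\\ \hat{\overline w}\end{bmatrix}$ for an explicit matrix $M(\xi)$ obtained by reading off the coefficients $1,-7/2,2,1/2$ and $1,-1$ from \cref{eq:anal_adv_ode}.

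Next I would diagonalize (or otherwise solve) this $2\times 2$ system. Its two eigenvalues $\mu_1(\xi),\mu_2(\xi)$ give growth factors $e^{-\lambda t\mu_\ell(\xi)/h}$. The exact solution has $w^\star(x_j,T)=e^{i\kappa x_j}e^{-i\kappa\lambda T}$ and the exact cell average is $e^{i\kappa x_{\phf{j}}}e^{-i\kappa\lambda T}\cdot\frac{2\sin(\xi/2)}{\xi}$; so I must compare the numerically evolved amplitude to $e^{-i\xi \lambda T/h}$ (since $\kappa\lambda T=\xi\lambda T/h$, this is $N_T:=\lambda T/h$ periods). The heart of the matter is the \emph{physical} eigenvalue $\mu_1(\xi)$, which must satisfy $\mu_1(\xi) = i\xi + O(\xi^5)$ as $\xi\to0$: this is precisely the superconvergence statement — a third-order DDO producing a fourth-order scheme — and it should follow from the general HV theory of \cite{XZeng:2019a}, but I would also verify it directly by Taylor-expanding the characteristic polynomial of $M(\xi)$ and checking that the $\xi^2,\xi^3,\xi^4$ terms in $\mu_1(\xi)-i\xi$ cancel. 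One must also confirm the \emph{spurious} eigenvalue $\mu_2(\xi)$ has non-negative real part bounded away from $0$ (i.e. $\mathrm{Re}\,\mu_2(\xi)\ge c>0$) so that the corresponding mode is damped; combined with the decomposition of the exact initial data into the two eigenvectors — where the spurious component has amplitude $O(\xi^{?})$ because the initial data is consistent between nodal and cell-averaged parts — this mode contributes only $O(h^4)$ at time $T$.

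Putting the pieces together: write the initial amplitude vector $(1,\,2\sin(\xi/2)/\xi)^T$ in the eigenbasis of $M(\xi)$ as $a_1(\xi)\bs{r}_1(\xi)+a_2(\xi)\bs{r}_2(\xi)$; show $a_1(\xi)=1+O(\xi^{4})$ and $a_2(\xi)$ is small (it should vanish to sufficiently high order, since the exact-data vector is itself $O(\xi^4)$-close to $\bs{r}_1(\xi)$). Then
\[
  w_j(T) = a_1 e^{-\lambda T\mu_1(\xi)/h}(\bs{r}_1)_1 e^{i\kappa x_j} + a_2 e^{-\lambda T\mu_2(\xi)/h}(\bs{r}_2)_1 e^{i\kappa x_j}.
\]
For the first term, $e^{-\lambda T\mu_1(\xi)/h} = e^{-i\kappa\lambda T}\exp\!\big(-\tfrac{\lambda T}{h}(\mu_1(\xi)-i\xi)\big) = e^{-i\kappa\lambda T}\big(1+O(\tfrac{\lambda T}{h}\xi^5)\big) = e^{-i\kappa\lambda T}(1+O(h^4))$, using $\xi=\kappa h$ and absorbing $\kappa^5\lambda T$ into $C$; the second term is $O(h^4)$ because $a_2$ is small and $|e^{-\lambda T\mu_2(\xi)/h}|\le1$. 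The same computation, multiplied by the quadrature factor $2\sin(\xi/2)/\xi$ which is itself $1+O(\xi^2)$ but cancels exactly against the corresponding exact cell-average factor, handles $\overline w_{\phf{j}}(T)$.

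The main obstacle I anticipate is twofold: first, carefully tracking the order of vanishing of the spurious amplitude $a_2(\xi)$ and confirming it is high enough (together with the strict damping $\mathrm{Re}\,\mu_2(\xi)\ge c>0$) to kill that mode to $O(h^4)$ uniformly in $T$; and second, the bookkeeping in the Taylor expansion of $\mu_1(\xi)$ — one genuinely needs the $O(\xi^5)$ (not merely $O(\xi^4)$) cancellation, because the error accumulates over $N_T=\lambda T/h$ periods, so an $O(\xi^4)=O(h^4)$ per-step phase error would degrade to $O(h^3)$ after $O(1/h)$ steps. Establishing the clean fifth-order agreement $\mu_1(\xi)=i\xi+O(\xi^5)$ — equivalently, that the amplification factor of the physical mode matches $e^{i\xi}$ through fourth order — is the crux, and it is exactly where the superconvergence mechanism of \cite{XZeng:2019a} enters.
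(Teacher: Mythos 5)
Your proposal follows essentially the same route as the paper: a von Neumann reduction of \cref{eq:anal_adv_ode} to a $2\times2$ amplification ODE in $\theta=\kappa h$, the superconvergent expansion $\mu_1(\theta)=i\theta+O(\theta^5)$ of the physical eigenvalue (which the paper establishes via the identity $a(\theta)+b(\theta)=i\theta+c_1(\theta)\theta^4$ and a perturbed square root), and control of the spurious mode; you correctly identify the fifth-order cancellation as the crux because the phase error accumulates over $O(1/h)$ wavelengths. The only cosmetic differences are that the paper normalizes the cell-average ansatz by the exact quadrature factor $(e^{i\kappa x_{j+1}}-e^{i\kappa x_j})/(i\kappa h)$ so both initial amplitudes are $1$, and it does not need $\operatorname{Re}\mu_2$ bounded away from zero --- only $\operatorname{Re}\mu_{1,2}\ge0$ together with the $O(\theta^5)$ spurious amplitude and a uniform lower bound $|\mu_1-\mu_2|\ge3$.
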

\begin{proof}
  We attempt to find a solution to~\cref{eq:anal_adv_ode} and~\cref{eq:anal_adv_ode_ic} in the form:
  \begin{equation}\label{eq:anal_adv_ode_sol}
    w_j(t) = A(t)e^{i\kappa x_j}\;,\quad
    \overline{w}_{\phf{j}}(t) = \frac{\overline{A}(t)}{i\kappa h}\left(e^{i\kappa x_{j+1}}-e^{i\kappa x_j}\right)\;,
  \end{equation}
  for some continuous functions $A(t)$ and $\overline{A}(t)$ with initial values being unity.
  Plugging~\cref{eq:anal_adv_ode_sol} into~\cref{eq:anal_adv_ode} and writing $\theta=\kappa h$, one obtains a system of two ODEs that are independent of $j$:
  \begin{equation}\label{eq:anal_adv_amp_mat}
    \frac{d}{dt}\begin{bmatrix}
      \overline{A}(t) \\ A(t)
    \end{bmatrix} = 
    -\frac{\lambda}{h}\bs{C}_{\theta}
    \begin{bmatrix}
      \overline{A}(t) \\ A(t)
    \end{bmatrix}\;,\quad
    \bs{C}_{\theta} = \begin{bmatrix}
      0 & i\theta \\
      \frac{-8+7e^{-i\theta}+e^{i\theta}}{i2\theta} & 2+e^{-i\theta}
    \end{bmatrix}\;,\quad
    \begin{bmatrix}
      \overline{A}(0) \\ A(0)
    \end{bmatrix} = 
    \begin{bmatrix}
      1 \\ 1
    \end{bmatrix}\;,
  \end{equation}
  whose solution is given by:
  \begin{equation}\label{eq:anal_adv_amp_sol}
    \begin{array}{l}
      \overline{A}(t) = \frac{\mu_2-i\theta}{\mu_2-\mu_1}e^{-\frac{\lambda}{h}\mu_1t} + \frac{i\theta-\mu_1}{\mu_2-\mu_1}e^{-\frac{\lambda}{h}\mu_2t}\;, \\ \vspace*{-.1in} \\
      A(t) = \frac{\mu_1(\mu_2-i\theta)}{i\theta(\mu_2-\mu_1)}e^{-\frac{\lambda}{h}\mu_1t} + \frac{\mu_2(i\theta-\mu_1)}{i\theta(\mu_2-\mu_1)}e^{-\frac{\lambda}{h}\mu_2t}\;.
    \end{array}
  \end{equation}
  Here $\mu_1$ and $\mu_2$ are the two roots of the equation:
  \begin{equation}\label{eq:anal_adv_char}
    \mu^2-(2+e^{-i\theta})\mu + 4-\frac{7}{2}e^{-i\theta}-\frac{1}{2}e^{i\theta} = 0\;.
  \end{equation}
  Let $b(\theta) = 2+e^{-i\theta}$ and $a(\theta) = \frac{-8+7e^{-i\theta}+e^{i\theta}}{i2\theta}$, by the Taylor theorem one gets:
  \begin{displaymath}
    a(\theta)+b(\theta) = i\theta + c_1(\theta)\theta^4\;,
  \end{displaymath}
  where $c_1(\theta)$ is an analytic and bounded function of $\theta$; particularly that exists a constant $C_1>0$ such that $\abs{c_1(\theta)}<C_1$ for all $\theta\in\mathbb{R}$.
  Using this notation, one gets:
  \begin{displaymath}
    \mu_{1,2} = \frac{1}{2}\left[b(\theta)\pm\sqrt{b(\theta)^2-4i\theta a(\theta)}\right] = \frac{1}{2}\left[b(\theta)\pm\sqrt{(b(\theta)-i2\theta)^2+i4c_1(\theta)\theta^5}\right]\;.
  \end{displaymath}
  Next, we use a fact (proved later) that there exists a continuous and bounded function $c_2(\theta)$, such that $\abs{c_2(\theta)}<C_2$ for some constant $C_2>0$ and:
  \begin{equation}\label{eq:anal_adv_sqrt}
    (b(\theta)-i2\theta+c_2(\theta)\theta^5)^2 = (b(\theta)-i2\theta)^2+i4c_1(\theta)\theta^5\;.
  \end{equation}
  Then, the two eigenvalues $\mu_{1,2}$ can be computed as:
  \begin{equation}\label{eq:anal_adv_eigs}
    \mu_1 = i\theta-\frac{1}{2}c_2(\theta)\theta^5\;,\quad
    \mu_2 = b(\theta)-i\theta+\frac{1}{2}c_2(\theta)\theta^5\;.
  \end{equation}
  
  To show the existence of such a $c_2(\theta)$, one notices that~\cref{eq:anal_adv_sqrt} is equivalent to:
  \begin{equation}\label{eq:anal_adv_c2}
    2(b(\theta)-i2\theta)c_2(\theta) + c_2(\theta)^2\theta^5 = i4c_1(\theta)\;.
  \end{equation}
  Letting $\theta\to0$, one obtains $c_2(0) = \frac{i4c_1(0)}{2b(0)} = \frac{i2c_1(0)}{3}$ is finite; hence~\cref{eq:anal_adv_c2} admits a continuous $c_2(\theta)$ for $\theta\in\mathbb{R}$ by the implicit value theorem.
  To show its boundedness, one just needs to use the fact that $c_2(\theta)\to0$ as $\theta\to\infty$ due to the boundedness of $b(\theta)$ and $c_1(\theta)$.

  Defining $\Delta\mu=\mu_2-\mu_1$ and plugging~\cref{eq:anal_adv_eigs} into~\cref{eq:anal_adv_amp_sol}, one gets:
  \begin{align*}
    \overline{A}(t) =& \frac{\Delta\mu-\frac{c_2}{2}\theta^5}{\Delta\mu}e^{-\frac{\lambda}{h}\left(i\theta-\frac{c_2}{2}\theta^5\right)t} + \frac{\frac{c_2}{2}\theta^5}{\Delta\mu}e^{-\frac{\lambda}{h}\mu_2t}\;, \\
    \textrm{and}\quad A(t) =& \frac{(1+i\frac{c_2}{2}\theta^4)(\Delta\mu-\frac{c_2}{2}\theta^5)}{\Delta\mu}e^{-\frac{\lambda}{h}\left(i\theta-\frac{c_2}{2}\theta^5\right)t} - \frac{i\frac{c_2}{2}\theta^4(\Delta\mu-\frac{c_2}{2}\theta^5)}{\Delta\mu}e^{-\frac{\lambda}{h}\mu_2t}\;.
  \end{align*}
  By \cref{lm:anal_sc_sw_diff} below, there exists a constant $C_3>0$ such that $\abs{\Delta\mu}>C_3$ for all $\theta\in\mathbb{R}$; furthermore by \cref{lm:anal_sc_sw_stab}, $\oname{Re}\,\mu_{1,2}\ge0$ for all $\theta$.
  First let us derive an estimate on $\overline{A}(t)$ and to this end obtain:
  \begin{align*}
    \abs{\overline{A}(t)-e^{-i\lambda\kappa t}} \le & \abs{e^{-\frac{\lambda}{h}\left(i\theta-\frac{c_2}{2}\theta^5\right)t}-e^{-i\lambda\kappa t}} + \abs{\frac{\frac{c_2}{2}\theta^5}{\Delta\mu}e^{-\frac{\lambda}{h}\mu_1t}} + \abs{\frac{\frac{c_2}{2}\theta^5}{\Delta\mu}e^{-\frac{\lambda}{h}\mu_2t}} \le \abs{e^{\frac{\lambda c_2}{2h}\theta^5t}-1} + \frac{C_2}{C_3}\abs{\theta}^5\;,
  \end{align*}
  and next we focus on estimating the first term in the right hand side.
  On the one hand: 
  \begin{displaymath}
    \oname{Re}\frac{\lambda c_2}{2h}\theta^5t = \oname{Re}\frac{\lambda}{h}(i\theta-\mu_1)=-\oname{Re}\frac{\lambda}{h}\mu_1\le0\quad\Rightarrow\quad 
    \abs{e^{\frac{\lambda c_2}{2h}\theta^5t}-1} \le 2,\ \forall\theta\;;
  \end{displaymath}
  and on the other hand, fixing any $\theta_0>0$ we obtain for all $-\theta_0<\theta<\theta_0$, by the mean value theorem:
  \begin{displaymath}
    \abs{e^{\frac{\lambda c_2}{2h}\theta^5t}-1} = \abs{e^{\frac{1}{2}\lambda c_2\kappa\theta^4t}-1} = \abs{\frac{1}{2}\lambda c_2\kappa\theta^4te^{\frac{1}{2}\xi\lambda c_2\kappa\theta^4t}} \le \frac{1}{2}\lambda C_2\abs{\kappa}T\abs{\theta}^4e^{\frac{1}{2}\lambda C_2\abs{\kappa}\theta_0^4T}\;,
  \end{displaymath}
  where $0<\xi<1$.
  Combining the two, one obtains that for all $\theta\in\mathbb{R}$:
  \begin{displaymath}
    \abs{e^{\frac{\lambda c_2}{2h}\theta^5t}-1} \le C_4h^4\;,
  \end{displaymath}
  where $C_4>0$ is a constant that is given by:
  \begin{displaymath}
    C_4 = \max\left(\frac{2\kappa^4}{\theta_0^4},\;\frac{\lambda C_2\abs{\kappa}^5T}{2}e^{\frac{1}{2}\lambda C_2\abs{\kappa}\theta_0^4T}\right)\;.
  \end{displaymath}
  Finally, we obtain the desired estimate on $\overline{A}(t)$ as:
  \begin{displaymath}
    \abs{\overline{A}(t)-e^{-i\lambda\kappa t}} \le C_4h^4+\frac{C_2\abs{\kappa}^5}{C_3}h^5 \le C_5h^4\;,
  \end{displaymath}
  with $C_5=C_4+C_2\abs{\kappa}^5L/C_3$, which gives rise to:
  \begin{displaymath}
    \abs{\overline{w}_{\phf{j}}(T)-\frac{1}{h}\int_{x_j}^{x_{j+1}}w^\star(x,T)dx} = \abs{\overline{A}(t)-e^{-i\lambda\kappa t}}\abs{\frac{e^{i\kappa x_{j+1}}-e^{i\kappa x_j}}{i\kappa h}} \le C_5h^4\;.
  \end{displaymath}
  Here we used the mean value theorem again to see that $\abs{(e^{i\kappa x_{j+1}}-e^{i\kappa x_j})/(i\kappa h)}\le1$.

  Following the same procedure, we can derive a similar bound on $\abs{A(t)-e^{-i\lambda\kappa t}}$ that leads to the first inequality of~\cref{eq:anal_adv_est}; the details are omitted here.
\end{proof}

Next we prove the two lemmas in the previous proof.
\begin{lemma}\label{lm:anal_sc_sw_diff}
  Let $\mu_{1,2}$ be the two roots of~\cref{eq:anal_adv_char}, then there exists a constant $C>0$ such that $\abs{\mu_1-\mu_2}>C$ for all $\theta\in\mathbb{R}$.
\end{lemma}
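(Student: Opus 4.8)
The plan is to reduce the claimed lower bound on $\abs{\mu_1-\mu_2}$ to the non-vanishing of an explicit cubic polynomial on the unit circle, and then close it with a one-line reverse-triangle estimate. First I would apply Vieta's formulas to the quadratic~\cref{eq:anal_adv_char}: writing $b(\theta)=2+e^{-i\theta}$ and $c(\theta)=4-\tfrac72 e^{-i\theta}-\tfrac12 e^{i\theta}$, the two roots satisfy $\mu_1+\mu_2=b(\theta)$ and $\mu_1\mu_2=c(\theta)$, so that
\begin{equation*}
  (\mu_1-\mu_2)^2 = b(\theta)^2 - 4c(\theta) = e^{-2i\theta} + 18e^{-i\theta} - 12 + 2e^{i\theta} =: \Delta(\theta)\;.
\end{equation*}
(The simplification of $b^2-4c$ is a short algebraic computation.) Since $\abs{\mu_1-\mu_2}=\sqrt{\abs{\Delta(\theta)}}$, it suffices to bound $\abs{\Delta(\theta)}$ from below by a positive constant uniformly in $\theta\in\mathbb{R}$.

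Next I would substitute $z=e^{-i\theta}$, so that $\abs{z}=1$ and $e^{i\theta}=1/z$, and multiply through by $z$ to obtain $z\,\Delta(\theta)=z^3+18z^2-12z+2=:P(z)$. Because $\abs{z}=1$, this does not change magnitudes: $\abs{\Delta(\theta)}=\abs{P(z)}$. Splitting $P(z)=z^2(z+18)-(12z-2)$ and using $\abs{z}=1$, the reverse triangle inequality gives
\begin{equation*}
  \abs{P(z)} \ge \abs{z}^2\,\abs{z+18} - \abs{12z-2} \ge (18-1) - (12+1) = 3\;,
\end{equation*}
so $\abs{\Delta(\theta)}\ge 3$ for every real $\theta$, hence $\abs{\mu_1-\mu_2}\ge\sqrt 3$, and any $C\in(0,\sqrt 3)$ (for instance $C=1$) works.

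I do not expect a genuine obstacle here; the only place needing care is getting the coefficients of $\Delta(\theta)$ right and checking that in the final estimate the high-degree part dominates, i.e. $18-1 > 12+1$. A softer alternative, if one prefers to avoid the explicit constant, is to note that $\Delta$ is continuous and $2\pi$-periodic, so $\abs{\Delta}$ attains its minimum on $[0,2\pi]$ and one needs only $\Delta(\theta)\neq 0$ for all $\theta$ — which is again exactly the non-vanishing of $P$ on $\abs{z}=1$ — but the inequality above delivers this directly together with a concrete value of $C$.
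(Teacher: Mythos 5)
Your proof is correct, but it takes a different route from the paper's. Both arguments start identically, using Vieta's formulas to get $(\mu_1-\mu_2)^2=e^{-2i\theta}+18e^{-i\theta}-12+2e^{i\theta}$. From there the paper computes the squared modulus of this quantity explicitly as a real cubic in $x=\cos\theta$, namely $\abs{\mu_1-\mu_2}^4=425-456x+96x^2+16x^3$, checks that this cubic is decreasing on $[-1,1]$, and evaluates at $x=1$ to obtain the \emph{sharp} bound $\abs{\mu_1-\mu_2}\ge3$ (sharp because the roots at $\theta=0$ are $0$ and $3$). You instead set $z=e^{-i\theta}$, clear the denominator to get the cubic $P(z)=z^3+18z^2-12z+2$ on the unit circle, and apply the reverse triangle inequality so that the dominant coefficient $18$ beats $1+12+2=15$; this is shorter and avoids any calculus or monotonicity check, at the cost of a weaker (but entirely sufficient) constant $\abs{\mu_1-\mu_2}\ge\sqrt{3}$. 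One arithmetic slip to fix: the bound $\abs{12z-2}\le 12+1=13$ is false (at $z=-1$ the left side equals $14$); the correct estimate is $\abs{12z-2}\le 12+2=14$, which still gives $\abs{P(z)}\ge 17-14=3$, so your final constant stands. Since the lemma (and its use in the proof of \cref{thm:anal_sc_sw}) only requires \emph{some} positive lower bound, your softer argument fully serves the purpose.
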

\begin{proof}
  Since $\mu_1 + \mu_2 = 2+e^{-i\theta}$ and $\mu_1\mu_2 = 4-\frac{7}{2}e^{-i\theta}-\frac{1}{2}e^{i\theta}$, one has
  \begin{align*}
    &(\mu_1-\mu_2)^2 = e^{-2i\theta} + 18e^{-i\theta} - 12 + 2e^{i\theta} \\
    \Rightarrow\quad&\abs{\mu_1-\mu_2}^4 = P(\cos\theta)\;,\quad P(x) \eqdef 425 - 456x + 96x^2 + 16x^3\;.
  \end{align*}
  It is trivial to verify that $P(x)$ is decreasing on $[-1,\;1]$ hence $P(\cos\theta)\ge P(1) = 81$; it follows that $\abs{\mu_1-\mu_2}\ge3$ for all $\theta$.
\end{proof}
\begin{lemma}\label{lm:anal_sc_sw_stab}
  Let $\mu_{1,2}$ be the two roots of~\cref{eq:anal_adv_char}, then $\oname{Re}\,\mu_{1,2}\ge0$ for all $\theta\in\mathbb{R}$.
\end{lemma}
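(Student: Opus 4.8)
The plan is to reduce the claim to a single polynomial inequality in $x=\cos\theta$ by way of the quadratic formula. Write $b(\theta)=2+e^{-i\theta}$ and recall from the proof of \cref{lm:anal_sc_sw_diff} that the discriminant of \cref{eq:anal_adv_char} is
\begin{displaymath}
  D(\theta)\eqdef(\mu_1-\mu_2)^2 = e^{-2i\theta}+18e^{-i\theta}-12+2e^{i\theta}\;,\qquad\textrm{so that}\qquad \mu_{1,2}=\tfrac12\bigl(b(\theta)\pm\sqrt{D(\theta)}\bigr)\;,
\end{displaymath}
where $\pm\sqrt{D}$ denotes the two square roots of $D$, which are genuinely distinct by \cref{lm:anal_sc_sw_diff}. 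Taking real parts and using $\oname{Re}\,b(\theta)=2+\cos\theta\ge1>0$, the desired bound $\oname{Re}\,\mu_{1,2}\ge0$ is equivalent to $\bigl(\oname{Re}\sqrt{D}\bigr)^2\le(2+\cos\theta)^2$, an estimate that is symmetric in the two branches of $\sqrt{D}$ and hence controls both roots at once.

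Next I would apply the elementary identity $(\oname{Re}w)^2=\tfrac12\bigl(|w^2|+\oname{Re}\,w^2\bigr)$ with $w=\sqrt{D}$, which recasts the target as $|D|+\oname{Re}\,D\le 2(2+\cos\theta)^2$. Both quantities on the left are explicit functions of $x=\cos\theta$: from \cref{lm:anal_sc_sw_diff} we already have $|D|=|\mu_1-\mu_2|^2=\sqrt{P(x)}$ with $P(x)=425-456x+96x^2+16x^3$, and a direct computation using $\cos2\theta=2x^2-1$ gives $\oname{Re}\,D=\cos2\theta+20\cos\theta-12=2x^2+20x-13$. Substituting, the inequality collapses to $\sqrt{P(x)}\le 21-12x$.

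Finally, since $21-12x\ge 9>0$ on $[-1,1]$, both sides may be squared; the inequality becomes $P(x)\le(21-12x)^2$, which after expansion reads $16x^3-48x^2+48x-16\le0$, i.e. $16(x-1)^3\le0$ — true for every $x=\cos\theta\le1$, with equality precisely at $x=1$. That equality case is consistent with $\theta\in2\pi\mathbb{Z}$, for which \cref{eq:anal_adv_char} has the root $\mu=0$. This completes the proof.

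I do not anticipate a genuine obstacle: the argument is short once the reduction is set up. The only point requiring care is the bookkeeping for the two branches of $\sqrt{D}$ — the estimate must be phrased for $\bigl(\oname{Re}\sqrt{D}\bigr)^2$ so that it bounds both sign choices in $\oname{Re}\,\mu_{1,2}=\tfrac12\bigl(2+\cos\theta\pm\oname{Re}\sqrt{D}\bigr)$ simultaneously, and it is exactly the identity $(\oname{Re}w)^2=\tfrac12\bigl(|w^2|+\oname{Re}\,w^2\bigr)$ that renders the whole computation branch-free and routine.
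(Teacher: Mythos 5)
Your proof is correct, and every computation checks out: the reduction to $(\oname{Re}\sqrt{D})^2\le(2+\cos\theta)^2$ is valid because $\oname{Re}\,b=2+\cos\theta>0$, the identity $(\oname{Re}\,w)^2=\tfrac12(|w^2|+\oname{Re}\,w^2)$ correctly eliminates the branch ambiguity, $\oname{Re}\,D=2x^2+20x-13$ and $|D|=\sqrt{P(x)}$ are right, and $P(x)-(21-12x)^2=16(x-1)^3$ indeed closes the argument. However, this is a genuinely different route from the paper. The paper does not touch the quadratic formula at all: it invokes a theorem of E.~Frank (a Routh--Hurwitz-type criterion for polynomials with complex coefficients), forms the reversed-conjugate polynomial $P^\ast(z)$, and shows that the count of right-half-plane zeros of the quadratic equals that of an explicit linear polynomial $P_1(z)$, which is then checked by hand at the test point $\xi=1$; the case $\cos\theta=1$ is treated separately up front. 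Your approach is more elementary and self-contained --- it needs no external root-location theorem, it recycles the identity $|\mu_1-\mu_2|^4=P(\cos\theta)$ already established in \cref{lm:anal_sc_sw_diff}, and it pins down the equality case ($\theta\in2\pi\mathbb{Z}$, root $\mu=0$) exactly rather than handling it as a side case. What the paper's argument buys in exchange is scalability: Frank's criterion applies verbatim to higher-degree amplification polynomials, where no explicit root formula is available, whereas your method is tied to the quadratic setting. One cosmetic note: the distinctness of the two square roots of $D$ (via \cref{lm:anal_sc_sw_diff}) is not actually needed for your argument, since the bound on $(\oname{Re}\sqrt{D})^2$ covers both sign choices regardless.
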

\begin{proof}
  If $\cos\theta=1$, it is easy to verify that the two roots are $0$ and $3$; hence we assume $\cos\theta<1$ below.
  Let $\theta$ be arbitrary such that $\cos\theta<1$, we define the polynomials:
  \begin{align*}
    P(z) =& z^2-(2+e^{-i\theta})z+4-\frac{7}{2}e^{-i\theta}-\frac{1}{2}e^{i\theta}\;, \\
    P^\ast(z) =& (-z)^2-\overline{(2+e^{-i\theta})}(-z)+\overline{4-\frac{7}{2}e^{-i\theta}-\frac{1}{2}e^{i\theta}} = z^2+(2+e^{i\theta})z+4-\frac{7}{2}e^{i\theta}-\frac{1}{2}e^{-i\theta}\;.
  \end{align*}
  Then using a classical result by E. Frank~\cite[Theorem 2.1]{EFrank:1947a}, if for some complex number $\xi$ with $\oname{Re}\,\xi>0$ there is $|P^\ast(\xi)|>|P(\xi)|$, then $P(z)$ has one more zero in the open right complex plane than the polynomial $P_1(z)$ defined as:
  \begin{displaymath}
    P_1(z) = \frac{P^\ast(\xi)P(z)-P(\xi)P^\ast(z)}{z-\xi} = (4+2\cos\theta)\xi z-6i\sin\theta(z+\xi)-10+8\cos\theta+2\cos^2\theta\;.
  \end{displaymath}
  Hence we just need to find $\xi$, such that (i) $\oname{Re}\,\xi>0$, (ii) $|P^\ast(\xi)|>|P(\xi)|$, and (iii) $z=\frac{6\xi i\sin\theta+10-8\cos\theta-2\cos^2\theta}{(4+2\cos\theta)\xi-6i\sin\theta}$ has positive real part.
  To this end, it is elementary to verify that $\xi=1$ satisfies all three requirements; particularly for the last one there is:
  \begin{displaymath}
    \oname{Re}\,z = \frac{4(1-\cos\theta)^3}{(4+2\cos\theta)^2+36\sin^2\theta} > 0\;,
  \end{displaymath}
  due to the assumption that $\cos\theta<1$.
\end{proof}

Regarding arbitrarily initial data with sufficient smoothness, we can show that the method is {\it essentially} fourth-order in the sense given by the next theorem.
\begin{theorem}\label{thm:anal_esc}
  Suppose $w^\star(x,t)$ is the exact solution to the problem~\cref{eq:anal_adv} with $C^p$ initial condition in the sense that $w^{\textrm{ext}}_0(x,0)\in C^p(\mathbb{R})$, where $w^{\textrm{ext}}_0$ is the $L$-periodic extension of $w_0$; and we assume $p\ge3$.
  Let $w_j(t)$ and $\overline{w}_{\phf{j}}(t)$ be the solutions to~\cref{eq:anal_adv_ode} with exact initial data computed according to~\cref{eq:anal_adv_ode_ic}, then for any $\varepsilon>0$ there exists a constant $C>0$ that only depends on $L$, $T$, $\lambda$, $w_0$, and $\varepsilon$, such that for all $j$: 
  \begin{subequations}\label{eq:anal_esc_est}
    \begin{align}
      \label{eq:anal_esc_est_n}
      &\abs{w_j(T)-w^\star(x_j,T)} \le Ch^4+\varepsilon \\
      \label{eq:anal_esc_est_c}
      \textrm{and}\quad
      &\abs{\overline{w}_{\phf{j}}(T)-\frac{1}{h}\int_{x_j}^{x_{j+1}}w^\star(x,T)dx} \le Ch^4+\varepsilon\;.
    \end{align}
  \end{subequations}
\end{theorem}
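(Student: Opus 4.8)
The plan is to reduce the general smooth-data case to the single-wave case already handled in \cref{thm:anal_sc_sw} by means of a Fourier decomposition, and then to control the tail of the Fourier series using the $C^p$ regularity with $p\ge3$. First I would write the $L$-periodic initial datum as $w_0(x)=\sum_{m\in\mathbb Z}\hat w_m e^{i\kappa_m x}$ with $\kappa_m = 2m\pi/L$; since $w_0^{\mathrm{ext}}\in C^p(\mathbb R)$ with $p\ge3$, the coefficients decay like $|\hat w_m|\le C_p|m|^{-p}$, so the series (and enough of its derivatives) converges absolutely and uniformly. By linearity of both the PDE \cref{eq:anal_adv} and the ODE system \cref{eq:anal_adv_ode}, the exact solution is $w^\star(x,t)=\sum_m \hat w_m e^{-i\lambda\kappa_m t}e^{i\kappa_m x}$, and the discrete solution is the corresponding superposition of the single-wave discrete solutions from \cref{eq:anal_adv_ode_sol}, i.e.\ $w_j(t)=\sum_m \hat w_m A^{(m)}(t)e^{i\kappa_m x_j}$ and similarly for $\overline w_{\phf j}$, where $A^{(m)},\overline A^{(m)}$ are the amplitudes from \cref{eq:anal_adv_amp_sol} associated with $\theta_m=\kappa_m h$.

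The second step is to split the sum at a cutoff $|m|\le M$ versus $|m|>M$. For the low modes, \cref{thm:anal_sc_sw} gives $|A^{(m)}(T)-e^{-i\lambda\kappa_m T}|\le C(\kappa_m)h^4$ where the constant, tracking the dependence exposed in that proof, grows polynomially in $\kappa_m$ (through the factors $C_4,C_5$, which involve powers $\kappa_m^4$, $\kappa_m^5$ and an exponential $e^{\frac12\lambda C_2|\kappa_m|\theta_0^4 T}$ — note $\theta_0$ can be fixed once, say $\theta_0=\pi$, so that $\theta_0^4$ is an absolute constant). Hence the low-mode contribution to the nodal error is bounded by $\sum_{|m|\le M}|\hat w_m|\,C(\kappa_m)h^4$; using $|\hat w_m|\le C_p|m|^{-p}$ with $p\ge3$ actually kills the polynomial-in-$\kappa_m$ growth but \emph{not} necessarily the exponential one, so one must be slightly careful — either choose $M=M(\varepsilon)$ fixed independently of $h$ so the exponential factor is a fixed constant, or observe that for the low modes $\theta_m=\kappa_m h\le\kappa_M h$ stays bounded and re-derive the single-wave estimate with an $h$-independent constant on that restricted range. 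Either way the low-mode error is $\le C(\varepsilon)h^4$.

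For the high modes $|m|>M$, I would \emph{not} try to estimate $A^{(m)}-e^{-i\lambda\kappa_m T}$ sharply; instead I would bound each term crudely. From \cref{lm:anal_sc_sw_stab} we have $\mathrm{Re}\,\mu_{1,2}\ge0$ for all $\theta$, so $|e^{-\frac\lambda h\mu_{1,2}T}|\le1$; combined with $|\Delta\mu|\ge C_3$ from \cref{lm:anal_sc_sw_diff} and the explicit formulas \cref{eq:anal_adv_amp_sol}, one gets a bound $|A^{(m)}(T)|\le C(1+|\theta_m|)\le C(1+\kappa_M h)\le C'$ that is uniform in $m$ and in $h$ (for $h\le1$, say), and of course $|e^{-i\lambda\kappa_m T}|=1$. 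Therefore the high-mode contribution is $\le \sum_{|m|>M}|\hat w_m|(C'+1)\le C''\sum_{|m|>M}|m|^{-p}$, which is the tail of a convergent series and hence $<\varepsilon$ once $M=M(\varepsilon)$ is taken large enough. Combining the two pieces and noting that, as in \cref{thm:anal_sc_sw}, $|(e^{i\kappa_m x_{j+1}}-e^{i\kappa_m x_j})/(i\kappa_m h)|\le1$ for the cell-averaged quantities, yields \cref{eq:anal_esc_est} with $C=C(\varepsilon)$.

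The main obstacle is the bookkeeping in the low-mode estimate: the constant from \cref{thm:anal_sc_sw} depends on $\kappa$ in a way that is harmless once $M$ is fixed but must be made explicit enough to see that fixing $M=M(\varepsilon)$ first (to handle the tail), and only then sending $h\to0$, produces a legitimate constant $C(\varepsilon)$ — i.e.\ the order of quantifiers "$\forall\varepsilon\,\exists C$" must be respected, with $M$ depending on $\varepsilon$ but not on $h$. A secondary technical point is verifying the uniform-in-$m$ crude bound $|A^{(m)}(T)|\le C'$; this is straightforward from the lemmas but requires checking that the prefactors $\mu_{1,2}/(i\theta_m)$ and $(\mu_{1,2}-i\theta_m)/(i\theta_m)$ in \cref{eq:anal_adv_amp_sol} remain bounded, which follows because $\mu_1=i\theta_m+O(\theta_m^5)$ and $\mu_2=b(\theta_m)-i\theta_m+O(\theta_m^5)$ with $b$ bounded, so $\mu_1/(i\theta_m)\to1$ and $\mu_2/(i\theta_m)$ is $O(1/\theta_m)$ for large $\theta_m$ but multiplied against $\theta_m^4$-size quantities elsewhere — a short case split on $|\theta_m|\lessgtr 1$ settles it.
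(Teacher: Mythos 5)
Your overall strategy is exactly the paper's: Fourier-decompose the data, apply \cref{thm:anal_sc_sw} mode by mode below a cutoff $M=M(\varepsilon)$ fixed independently of $h$, control the tail with the $C^3$ decay of the Fourier coefficients, and respect the quantifier order ``fix $M(\varepsilon)$ first, then let $h\to0$.'' The low-mode bookkeeping you describe is also how the paper proceeds (it simply sets $C=\sum_{|m|\le m_0}|a_m|C_{5,m}$, a finite sum of finite constants once $m_0$ is fixed).

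The one step that fails as written is the claimed uniform-in-$m$ bound on the high-mode amplitudes. The chain $|A^{(m)}(T)|\le C(1+|\theta_m|)\le C(1+\kappa_M h)$ is false for $|m|>M$: there $|\theta_m|=2\pi|m|h/L$ exceeds $\kappa_M h$ and is unbounded over $m$ at fixed $h$. Moreover the linear growth in $|\theta_m|$ is genuine, not an artifact of a crude estimate: in \cref{eq:anal_adv_amp_sol} the coefficient $(i\theta-\mu_1)/(\mu_2-\mu_1)$ has modulus of order $|\theta|$ for large $|\theta|$, because $\mu_1$, $\mu_2$, and $\mu_2-\mu_1$ all stay bounded (roots of a quadratic with $2\pi$-periodic coefficients, with $|\mu_2-\mu_1|\ge3$ by \cref{lm:anal_sc_sw_diff}), so $\overline{A}^{(m)}$ and $A^{(m)}$ really can grow like $1+|\theta_m|$. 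The repair is exactly what the paper does and costs you nothing: keep the non-uniform bound $|A^{(m)}(T)|,\,|\overline{A}^{(m)}(T)|\le C(1+|m|h)\le C(1+|m|L)$ and pair it with the convergence of $\sum_{|m|>M}|m|\,|\hat{w}_m|$, which your decay $|\hat{w}_m|\lesssim|m|^{-p}$ with $p\ge3$ supplies with room to spare; the tail still tends to $0$ as $M\to\infty$. With that one substitution your argument closes and coincides with the paper's proof.
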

\begin{proof}
  As $w^\star(x,0)=w_0(x)\in C^p([0,\;L]) \subset L^2([0,\;L])$, it has the Fourier series expansion:
  \begin{displaymath}
    w^\star(x,0) = \sum_ma_me^{i\kappa_m x}\;,\quad \kappa_m \eqdef \frac{2m\pi}{L}\;,
  \end{displaymath}
  where the summation runs over all integers.
  Thus the exact solutions $w^\star(x,T)$ and the solutions to the ODEs~\cref{eq:anal_adv_ode} are given by:
  \begin{align*}
    &w^\star(x_j,T) = \sum_ma_me^{-i\lambda\kappa_mT}e^{i\kappa_m x_j}\,,\ 
    &&\frac{1}{h}\int_{x_j}^{x_{j+1}}w^\star(x,T)dx = \sum_m\frac{a_me^{-i\lambda\kappa_mT}}{i\kappa_m h}(e^{i\kappa_mx_{j+1}}-e^{i\kappa_mx_j})\;; \\
    &w_j(T) = \sum_ma_mA_m(T)e^{i\kappa_mx_j}\,,\ 
    &&\overline{w}_{\phf{j}}(T) = \sum_m\frac{a_m\overline{A}_m(T)}{i\kappa_mh}(e^{i\kappa_mx_{j+1}}-e^{i\kappa_mx_j})\;.
  \end{align*}
  Here $A_m(t)$ and $\overline{A}_m(t)$ are given by~\cref{eq:anal_adv_amp_sol} corresponding to $\kappa=\kappa_m$.

  As the $L$-periodic extension of $w_0(x,0)$ is $C^3$ (since $p\ge3$), a classical estimate on the Fourier coefficients gives for all $m\ne0$ and $m_0>0$:
  \begin{displaymath}
    \abs{a_m} \le \frac{L^3}{8m^3\pi^3}\nrm{w'''_0(x)}_{\infty}\;\Rightarrow\;
    \sum_{\abs{m}>m_0}\abs{a_m} \le \frac{L^3\nrm{w'''_0(x)}_{\infty}}{8m_0^2\pi^3}\ \textrm{ and }\ 
    \sum_{\abs{m}>m_0}\abs{ma_m} \le \frac{L^3\nrm{w'''_0(x)}_{\infty}}{4m_0\pi^3}\;.
  \end{displaymath}
  Because the quadratic function~\cref{eq:anal_adv_char} has coefficients that are periodic in $\theta$, there exists a $C_1>0$ such that $\abs{\mu_{1,2}}\le C_1$ for all $\theta\in\mathbb{R}$.
  Then by the first equation of~\cref{eq:anal_adv_amp_sol}, \cref{lm:anal_sc_sw_diff}, and \cref{lm:anal_sc_sw_stab}, we have:
  \begin{displaymath}
    \abs{\overline{A}_m(t)} \le \frac{2(C_1+\abs{\kappa_m}h)}{3}\;. 
  \end{displaymath}
  To get a similar bound on $A_m(t)$, we note that $\lim_{\theta\to0}\mu_1(\theta)/\theta = i$, hence there exists a constant $C_2>0$ such that $\abs{\mu_1}\le C_2\abs{\theta}$. 
  Thus using the second equation of~\cref{eq:anal_adv_amp_sol} and the same two lemmas we obtain:
  \begin{displaymath}
    \abs{A_m(t)} \le \frac{C_2(C_1+\abs{\kappa_m}h)}{3} + \frac{C_1(1+C_2)}{3}\;.
  \end{displaymath}
  Hence there exists $C_3, C_4>0$ such that for all $m$ and $t$:
  \begin{equation}\label{eq:anal_esc_a}
    \abs{\overline{A}_m(t)},\;\abs{A_m(t)} \le C_3 + C_4\abs{m}h \le C_3 + C_4\abs{m}L\;.
  \end{equation}

  At the end of the proof for~\cref{thm:anal_sc_sw} we showed that there exists a $C_{5,m}>0$ that depends on $L$, $T$, $\lambda$, and $\kappa_m$, such that $\abs{\overline{A}_m(t)-e^{-i\lambda\kappa_mT}}, \abs{A_m(t)-e^{-i\lambda\kappa_mT}} \le C_{5,m}h^4$ for all $h>0$.
  These estimates lead to:
  \begin{align*}
    \abs{w_j(T)-w^\star(x_j,T)} &\le \sum_{\abs{m}\le m_0}\abs{a_m}\abs{A_m(T)-e^{-i\lambda\kappa_mT}} + \sum_{\abs{m}>m_0}\abs{a_m}\left(\abs{A_m(T)}+1\right) \\
    &\le \sum_{\abs{m}\le m_0}\abs{a_m}C_{5,m}h^4+\sum_{\abs{m}>m_0}\abs{a_m}(1+C_3+C_4\abs{m}h) \\
    &\le \sum_{\abs{m}\le m_0}\abs{a_m}C_{5,m}h^4 + \frac{L^3(1+C_3)\nrm{w'''_0(x)}_{\infty}}{8m_0^2\pi^3} + \frac{L^4C_4\nrm{w'''_0(x)}_{\infty}}{4m_0\pi^3}\;.
  \end{align*}
  To this end, given any $\varepsilon>0$ we may fix an $m_0>0$ such that:
  \begin{displaymath}
    m_0 > \frac{L^4C_4\nrm{w'''_0(x)}_\infty}{2\varepsilon\pi^3}\quad\textrm{ and }\quad
    m_0^2 > \frac{L^3(1+C_3)\nrm{w'''_0(x)}_\infty}{4\varepsilon\pi^3}\;,
  \end{displaymath}
  and define $C=\sum_{\abs{m}\le m_0}\abs{a_m}C_{5,m}>0$.
  Then both $C$ and $m_0$ only depend on $L$, $T$, $\lambda$, $w_0$, and $\varepsilon$, and for all $j$ there is:
  \begin{displaymath}
    \abs{w_j(T)-w^\star(x_j,T)} < Ch^4 + \frac{\varepsilon}{2} + \frac{\varepsilon}{2} = Ch^4 + \varepsilon\;.
  \end{displaymath}

  Lastly regarding an estimate on $\overline{w}_{\phf{j}}(t)$, we use the fact that $\abs{e^{i\kappa_mx_{j+1}}-e^{i\kappa_mx_j}}\le\abs{i\kappa_mh}$ for all $m$, $h$, and $j$, and obtain the estimate:
  \begin{align*}
    \abs{\overline{w}_{\phf{j}}(t)-\frac{1}{h}\int_{x_j}^{x_{j+1}}w^\star(x,T)dx} &\le \sum_{\abs{m}\le m_0}\abs{a_m}\abs{\overline{A}_m(t)-e^{-i\lambda\kappa_mT}} + \sum_{\abs{m}>m_0}\abs{a_m}\left(\abs{\overline{A}_m(t)}+1\right) \\
    &\le Ch^4+\varepsilon\;.
  \end{align*}
  Here $\varepsilon>0$ is arbitrary and $m_0$ and $C$ are defined the same way as before.
\end{proof}

A direct consequence of~\cref{thm:anal_esc} is the convergence of the HV solution to the exact one given $C^3$ initial data as $N\to\infty$, where $N$ is the number of cells:
\begin{theorem}\label{thm:anal_conv}
  Let $w^\star(x,t)$, $w_j(t)$, and $\overline{w}_{\phf{j}}(t)$ be defined the same way as in previous theorem, then for any $\varepsilon_0>0$, there exists an $N_0>0$ that only depends on $L$, $T$, $\lambda$, $w^\star$, and $\varepsilon_0$,  such that for all $N>N_0$ the corresponding semi-discretized HV solution satisfies:
  \begin{equation}\label{eq:anal_conv_est}
    \abs{w_j(T)-w^\star(x_j,T)} \le \varepsilon_0\quad\textrm{and}\quad 
    \abs{\overline{w}_{\phf{j}}(T)-\frac{1}{h}\int_{x_j}^{x_{j+1}}w^\star(x,T)dx} \le \varepsilon_0\;,\quad 
    \forall j\;.
  \end{equation}
\end{theorem}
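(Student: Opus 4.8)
The plan is to obtain \cref{thm:anal_conv} as a direct corollary of \cref{thm:anal_esc} through a two-stage choice of parameters. First I would fix $\varepsilon=\varepsilon_0/2$ and apply \cref{thm:anal_esc}: since $w^\star$, $w_j$, $\overline{w}_{\phf{j}}$ are assumed to be as in that theorem (in particular $w_0$ has a $C^p$ periodic extension with $p\ge3$), this produces a constant $C>0$ depending only on $L$, $T$, $\lambda$, $w^\star$, and $\varepsilon_0$, such that for every $N$, with $h=L/N$, and every $j$,
\begin{align*}
  \abs{w_j(T)-w^\star(x_j,T)} &\le Ch^4+\frac{\varepsilon_0}{2}\;, \\
  \abs{\overline{w}_{\phf{j}}(T)-\frac{1}{h}\int_{x_j}^{x_{j+1}}w^\star(x,T)dx} &\le Ch^4+\frac{\varepsilon_0}{2}\;.
\end{align*}

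Second, because $h=L/N$, the residual term satisfies $Ch^4=CL^4/N^4$, which decreases monotonically to $0$ as $N\to\infty$. I would therefore set
\begin{displaymath}
  N_0 \eqdef L\left(\frac{2C}{\varepsilon_0}\right)^{1/4}\;,
\end{displaymath}
so that $N>N_0$ forces $Ch^4<\varepsilon_0/2$. Adding the two halves then yields both inequalities in \cref{eq:anal_conv_est}, and since $C$ depends only on the quantities listed in the statement, so does $N_0$.

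The argument involves no genuine obstacle; it is an $\varepsilon$–$N$ bookkeeping on top of \cref{thm:anal_esc}. The one subtlety that warrants care is the order of quantifiers: the constant $C$ delivered by \cref{thm:anal_esc} is permitted to depend on $\varepsilon$, so one must commit to the choice $\varepsilon=\varepsilon_0/2$ and extract $C$ \emph{before} selecting $N_0$; choosing $N_0$ first would be circular. It is also worth recording explicitly that the smoothness hypothesis $p\ge3$ required by \cref{thm:anal_esc} is inherited here via the standing assumption that $w^\star$, $w_j$, and $\overline{w}_{\phf{j}}$ are defined exactly as in the preceding theorem.
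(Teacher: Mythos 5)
Your proposal is correct and is essentially identical to the paper's proof: fix $\varepsilon=\varepsilon_0/2$ in \cref{thm:anal_esc}, extract the resulting constant $C$, and take $N_0=(2CL^4/\varepsilon_0)^{1/4}$ so that $Ch^4<\varepsilon_0/2$ for $N>N_0$. The quantifier-order remark is sound and matches the logic implicit in the paper's argument.
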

\begin{proof}
  Choose $\varepsilon=\varepsilon_0/2$ in~\cref{thm:anal_esc} and let the correponding constant in~\cref{eq:anal_esc_est} be denoted $C$.
  Then for all $N_0>(2CL^4/\varepsilon_0)^{1/4}$, one has~\cref{eq:anal_conv_est} following~\cref{eq:anal_esc_est}.
  Because $C$ depends on $L$, $T$, $\lambda$, $w^\star$, and $\varepsilon_0$, so is $N_0$.
\end{proof}

\section{Residual-consistent viscosity in the HV framework}
\label{sec:ev}
The last piece in the 1D HV method of~\cref{sec:hv} is the calculation of the artificial viscosity $\nu_h$ in~\cref{eq:hv_av}, which is described here.
There are two major considerations while designing $\nu_h$: first it should provide sufficient diffusion near discontinuities so that spurious oscillations are suppressed, and second the magnitude of $\nu_h$ should be sufficiently small when the solution is smooth so that the former order of accuracy is not affected.
To this end we construct a residual-based viscosity that is motivated by the entropy viscosity by Guermond et al~\cite{JLGuermond:2014a,JLGuermond:2016a} for finite element methods and a recent residual-based viscosity by Stiernstr\"{o}m~\cite{VStiernstrom:2021a} in the context of finite difference methods.

\subsection{Cellular entropy residual}
\label{sec:rcv_res}
The entropy $S=S(\bs{W})=\rho s$ satisfies the inequality $S(\bs{W})_t+\nabla\cdot(\bs{v}(\bs{W})S(\bs{W}))\le0$
%The entropy $S=S(\bs{W})$ satisfies the inequality:
%\begin{equation}\label{eq:rcv_res_ineq}
%  S(\bs{W})_t + \nabla\cdot\bs{\eta}(\bs{W}) \le 0\;,
%  %S(\bs{W})_t + \nabla\cdot\bs{\eta}(\bs{W}) \ge 0\;,
%\end{equation}
with the equality hold whenever the solution is smooth.
Hence it has been widely used as an indicator for discontinuities in compressible flows.

Note that when the flow is smooth, one has the equivalent advective equation for the specific entropy: $s_t+\bs{v}\cdot\nabla s=0$.
To this end, we consider the entropy residual for $s$, which in the one-dimensional case and at the semi-discretized level is given by:
\begin{equation}\label{eq:rcv_res_semi}
  \oname{Res}_{\phf{j}}s = \frac{d\overline{s}_{\phf{j}}}{dt} + \overline{v}_{\phf{j}}\frac{s(\bs{W}_{j+1}) - s(\bs{W}_j)}{h}\;,
\end{equation}
where $\overline{s}_{\phf{j}}=s(\overline{\bs{W}}_{\phf{j}})$ and $\overline{v}_{\phf{j}}=v(\bs{W}_{\phf{j}})$ are the specific entropy and velocity computed from the cell-averaged solution, respectively.
%The entropy flux is given by $\bs{\eta}(\bs{W}) = S(\bs{W})\bs{v}(\bs{W})$.

%In the one-dimensional case, the entropy flux will be denoted $\eta(\bs{W})$ as it is a scalar. 
%For its construction we define for each cell $\Omega_j^{j+1}$ a semi-discretized residual of the entropy $S$:
%\begin{equation}\label{eq:rcv_res_semi}
%  \oname{Res}_{\phf{j}}S = \frac{d\overline{S}_{\phf{j}}}{dt} + \frac{1}{h}\left[\eta(\bs{W}_{j+1}) - \eta(\bs{W}_j)\right]\;,
%\end{equation}
%where $\overline{S}_{\phf{j}} = S(\overline{\bs{W}}_{\phf{j}})$.
To construct a fully-discretized residual $\oname{Res}_{\phf{j}}^ns$, the spatial part is replaced by $\overline{v}(\bs{W}_{\phf{j}}^n)\times (s(\bs{W}_{j+1}^n)-s(\bs{W}_j^n))/h$ and the time-derivative is approximated by the standard Backward Difference Formula (BDF) with the BDF order being chosen as $\min(n,2)$.

Because the entropy function $s(\cdot)$ and the velocity $v(\cdot)$ do not commute with the integral operator, thus~(\ref{eq:rcv_res_semi}) is only second-order accurate in space (and this is why we only use a BDF with up to second-order accuracy in time).
As we will see later, however, this choice will not affect for formal order of accuracy of the resulting method.

\subsection{Construction of the artificial viscosity}
\label{sec:rcv_vh}
At the beginning of the time step $t^n$, we compute an artificial viscosity $\nu_{\phf{j}}^n$ for each cell, average them to obtain a nodal viscosity (on the domain boundaries, the nodal value is taken as the nearest cell value), and fix the nodal viscosities throughout the Runge-Kutta stages.
In particular, let the cellular residual $\oname{Res}_{\phf{j}}^ns$ be computed as before, we compute $\nu_{\phf{j}}^n$ by:
\begin{equation}\label{eq:rcv_vh}
  \nu_{\phf{j}}^n = \frac{h v_{\max}(\overline{\bs{W}}_{\phf{j}}^n)}{4}(\mathcal{M}\!\circ\!\mathcal{S}\!\circ\!\mathcal{N})(Z_{\phf{j}}^n),\ Z_{\phf{j}}^n = \frac{h\abs{\oname{Res}_{\phf{j}}^ns}/v_{\max}(\overline{\bs{W}}_{\phf{j}}^n)}{\delta s^n + \delta S^n/\overline{\rho}^n + \epsilon\overline{s}^n}.
\end{equation}
%\begin{equation}\label{eq:rcv_vh}
%  \nu_j^n = \min\left(\frac{1}{2}h\left(v(\bs{W}_j^n)+c_s(\bs{W}_j^n)\right),\;\frac{\alpha_\nu h^2\abs{\oname{Res}_j^nS}}{\delta S^n+\epsilon\overline{S}^n}\cdot\frac{\abs{\Delta S_j^n}}{\delta S^n+\epsilon\overline{S}^n}\right)\;.
%\end{equation}
%{\color{red}
%  \begin{equation}\label{eq:rcv_vh_ver2}
%    \nu_j^n = \frac{1}{2}h\left(v(\bs{W}_j^n)+c_s(\bs{W}_j^n)\right)\mathcal{F}\left(\frac{\alpha_\nu\Delta t_n\abs{\oname{Res}_j^nS}/\alpha_{\cfl}}{\delta S^n+\epsilon\overline{S}^n}\right)\;,\quad\mathcal{F}(z) = \frac{z}{1+z}\;.
%  \end{equation}
%}
Here the first term ($hv_{\max}/4$) in the right hand side of~\cref{eq:rcv_vh} is the standard von Neumann-Richtmyer viscosity~\cite{JVonNeumann:1950a} corresponding to the average mesh size per unknown variable (i.e., $h/2$), the indicator $Z_{\phf{j}}^n$ is a dimensionless quantity computed from the entropy residual, and the three operations $\mathcal{M}$, $\mathcal{S}$, and $\mathcal{N}$ in the sequence of composition are a smoother, a sharpener, and a normalizer, respectively, as will be explained later.
The quantities involved in the computation of $Z_{\phf{j}}^n$ are $\delta s^n\approx\nrm{s-\abs{\Omega}^{-1}\nrm{s}_1}_{\infty}$, $\overline{s}^n\approx\abs{\Omega}^{-1}\nrm{s}_1$, $\delta S^n\approx\nrm{S-\abs{\Omega}^{-1}\nrm{S}_1}_{\infty}$, $\overline{S}^n\approx\abs{\Omega}^{-1}\nrm{S}_1$, and $\overline{\rho}^n\approx\abs{\Omega}^{-1}\nrm{\rho}_1$; and they are computed by:
\begin{displaymath}
  \overline{z}^n = \frac{1}{hN}\sum_{j=0}^{N-1}h\abs{z_{\phf{j}}^n}\;,
  \quad
  \delta z^n = \max\left(\max_j\abs{z_j^n-\overline{z}^n},\;\max_j\abs{z_{\phf{j}}^n-\overline{z}^n}\right)\;,
\end{displaymath}
where $z_{\phf{j}}^n=z(\overline{\bs{W}}_{\phf{j}}^n)$ and $z_j^n=z(\bs{W}_j^n)$ with $z$ stands for $\rho$, $s$, or $S$;
and $\epsilon=10^{-16}$ is a small positive number that prevents division-by-zero.

The three operations mentioned earlier are explained below:
\begin{itemize}
  \item $\mathcal{N}$ rescales all indicators according to:
    \begin{equation}\label{eq:rcv_vh_norm}
      Z_{\phf{j}}^n \mapsto Z_{\phf{j}}^n/\max(1,\max_kZ_{\phf{k}}^n)\;.
    \end{equation}
  \item $\mathcal{S}$ is a function and $\mathcal{S}(Z)$ computes a quantity that fast transits from $0$ to $1$ as $Z$ increases from $0$ to $1$:
    \begin{equation}\label{eq:rcv_vh_sharp}
      \mathcal{S}(Z) = \frac{1}{2} - \frac{1}{2}\cos\left(\pi\min\left(\frac{Z}{z_0},1\right)\right)\;,\quad 0\le Z\le 1\;.
    \end{equation}
  \item $\mathcal{M}$ smooths the sharpened scaling coefficient to avoid abrupt change in the artificial viscosity.
    In this work, we choose a smoother that first spread the largest artificial viscosity to a stencil slightly larger than the stencil of the numerical method, and then apply a linear low-pass filter to molify the resulting field that tends to be composed of step functions:
    \begin{equation}\label{eq:rcv_vh_smth}
      \mathcal{M}  = \mathcal{M}_{\ave}^3\!\circ\!\mathcal{M}_{\max}^2\;,
    \end{equation}
    where the operator $\mathcal{M}_{\max}$ computes $Z_{\phf{j}}\mapsto\max(Z_{\mhf{j}},Z_{\phf{j}},Z_{\phfg{j}{3}})$ and $\mathcal{M}_{\ave}$ computes $Z_{\phf{j}}\mapsto (Z_{\mhf{j}}+4Z_{\phf{j}}+Z_{\phfg{j}{3}})/6$, with obvious modifications near the boundaries such as $Z_{1/2}\mapsto\max(Z_{1/2},Z_{3/2})$ and $Z_{1/2}\mapsto(2Z_{1/2}+Z_{3/2})/3$ in the case of $\mathcal{M}_{\max}$ and $\mathcal{M}_{\ave}$, respectively.
    To see the impact of the smoother, we plot the scaling coefficient without any smoother, after applying $\mathcal{M}_{\max}^2$, and after applying the entire $\mathcal{M}_{\ave}$ in~\cref{fg:rcv_vh_coef}.
    These coefficients are computed based on solving the Sod shock tube problem, see~\cref{sec:num_1d_sod}.
    \begin{figure}\centering
  \includegraphics[width=.45\textwidth]{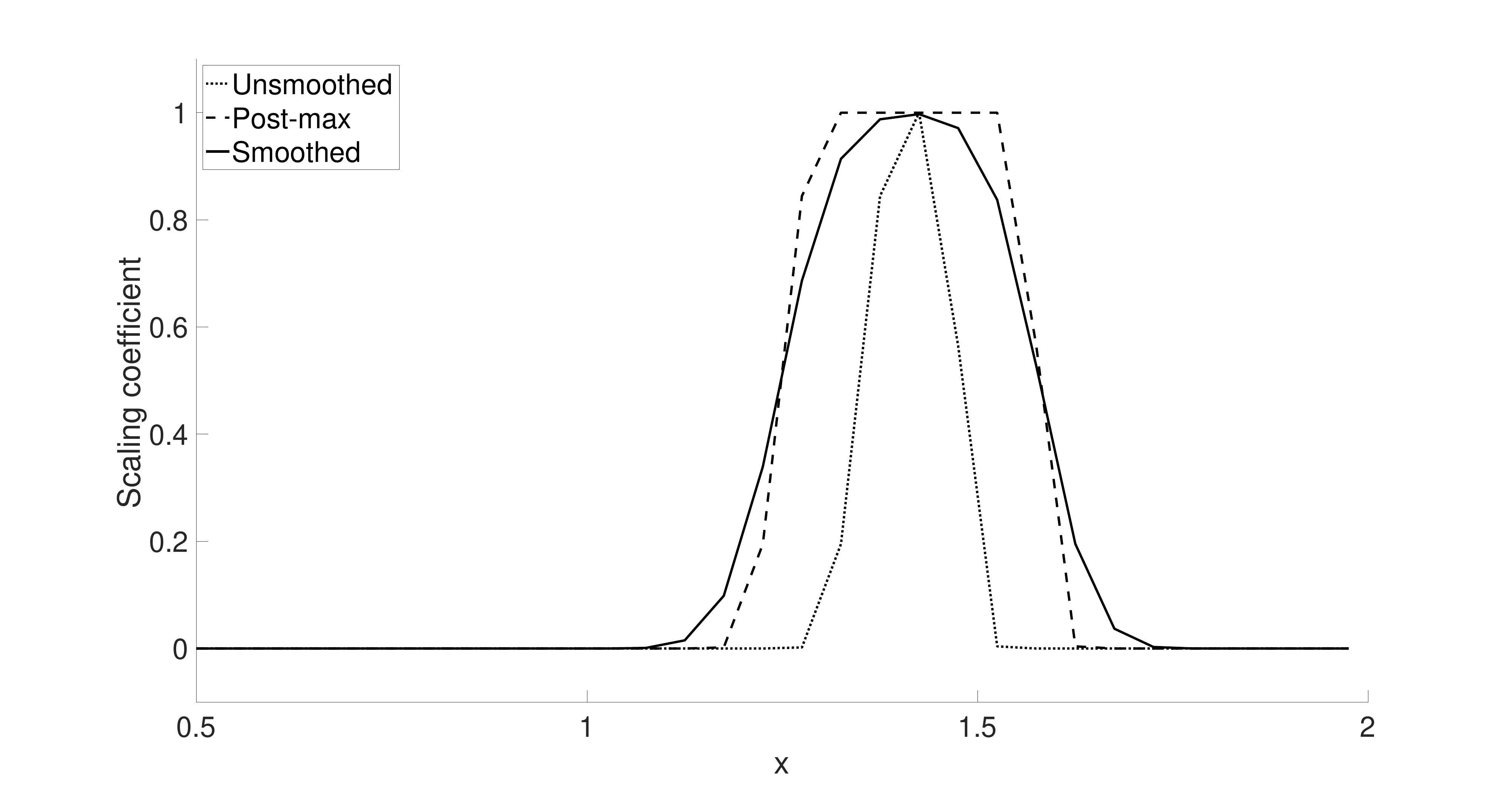}~~
  \includegraphics[width=.45\textwidth]{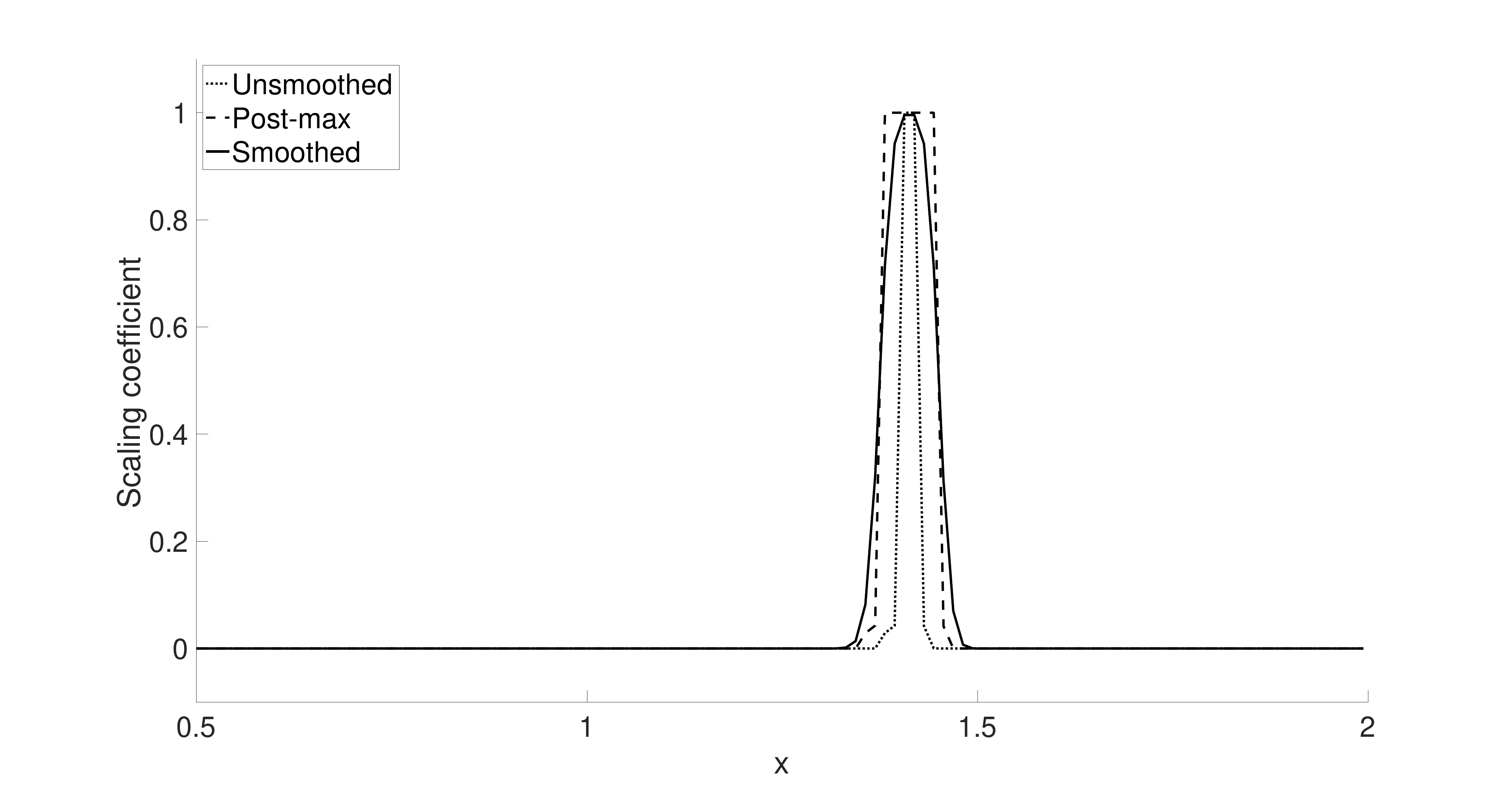}~~
      \caption{The scaling coefficients computed in the Sod shock tube test with $h=1/20$ (left) and $h=1/80$ (right).
      The coefficients $\mathcal{S}\circ\mathcal{N}(Z_{\phf{j}}^n)$, $\mathcal{M}_{\max}^2\circ\mathcal{S}\circ\mathcal{N}(Z_{\phf{j}}^n)$, and $\mathcal{M}\circ\mathcal{S}\circ\mathcal{N}(Z_{\phf{j}}^n)$ are denoted {\it unsmoothed}, {\it post-max}, and {\it smoothed}, respectively.}
      \label{fg:rcv_vh_coef}
    \end{figure}
\end{itemize}

Lastly we briefly estimate the scaling coefficient in various flow conditions and we suppress the script $^n$ for simplicity.
First, if the solution is sufficiently smooth, one has $\oname{Res}_{\phf{j}}=O(h^2)$ and thus $Z_{\phf{j}}=O(h^3)$ in~\cref{eq:rcv_vh}.
Now introducing the notation:
\begin{displaymath}
  Z_{\phf{j}}\xrightarrow{\mathcal{N}}\tilde{Z}_{\phf{j}}\xrightarrow{\mathcal{S}}\hat{Z}_{\phf{j}}\xrightarrow{\mathcal{M}}\check{Z}_{\phf{j}}\;,
\end{displaymath}
one see that:
\begin{displaymath}
  \tilde{Z}_{\phf{j}} = \frac{Z_{\phf{j}}}{\max(1,\max_kZ_{\phf{k}})} \le Z_{\phf{j}}\ \Rightarrow\ \tilde{Z}_{\phf{j}} = O(h^3)\;;
\end{displaymath}
then for sufficiently small $h>0$, there is:
\begin{displaymath}
  \hat{Z}_{\phf{j}} = \frac{1}{2} - \frac{1}{2}\cos\frac{\pi \tilde{Z}_{\phf{j}}}{z_0} = O(h^6)\;;
\end{displaymath}
and finally one easily obtains $\check{Z}_{\phf{j}}=O(h^6)$, which will not affect the formal fourth-order accuracy of the method.
At the beginning of the computation, due to a lower order BDF one has $\check{Z}_{\phf{j}}^0 = O(h^2)$ and $\check{Z}_{\phf{j}}^1 = O(h^4)$ for the first two time steps, respectively, which gives $\nu_{\phf{j}}^0=O(h^3)$ and $\nu_{\phf{j}}^1=O(h^5)$.
Hence we do not expect a reduction in the global order of accuracy, which is indeed confirmed by numerical tests.

Next, in the vicinity of a shock wave, with a small choice of $z_0$ one generally has $Z_{\phf{j}}>z_0$ and consequently $\hat{Z}_{\phf{j}}=1$, indicating the artificial viscosity reduces to the classical one by von Neumann and Richtmyer. 
Lastly, it is clear that $z_0$ plays a key role in the design of the entropy viscosity: on the one hand it decides the strength of shock wave at which the von Neumann-Richtmyer viscosity will be activated, and on the other hand the viscosity does not impact the formal order of accuracy only if $h^3\lesssim z_0$ or $h\lesssim\sqrt[3]{z_0}$.
In this work, a fixed value $z_0=0.04$ is selected for all numerical tests and it gives very good numerical performance.

%and lastly $\Delta S_j^n$ measures local entropy variation:
%\begin{displaymath}
%  \Delta S_j^n = \max\left(S_{j-1}^n,S_{\mhf{j}}^n,S_j^n,S_{\phf{j}}^n,S_{j+1}^n\right)-\min\left(S_{j-1}^n,S_{\mhf{j}}^n,S_j^n,S_{\phf{j}}^n,S_{j+1}^n\right)\;.
%\end{displaymath}
%Here if the node is on the boundary, the range on the right hand side excludes obvious non-existent variables.

\section{A fourth-order HV method for Euler equations, Part II}
\label{sec:ext}

Here we discuss extending the HV method to problems in a more general setting, namely the non-periodic boundary conditions in 1D in~\cref{sec:ext_bc} and the 2D problems on Cartesian grids in~\cref{sec:ext_2d} and~\cref{sec:ext_ev}, respectively.

\subsection{Non-periodic boundary conditions in 1D}
\label{sec:ext_bc}
Constructing $[\mathcal{D}_x]$ at a non-periodic boundary uses a stencil that stays inside the computational domain.
Without loss of generality, we demonstrate this procedure and construct $[\mathcal{D}_xw]_0$ for a scalar function as:
\begin{equation}\label{eq:ext_bc_left}
  [\mathcal{D}_xw]_0 = \frac{1}{h}\left(-4w_0+6\overline{w}_{1/2}-2w_1\right)\;.
\end{equation}
The formula~\cref{eq:ext_bc_left} is a second-order DDO that is one-order lower than the interior operators~\cref{eq:hv_1d_ddo_l} and~\cref{eq:hv_1d_ddo_r}; hence it will not affect the overall order of accuracy of the method.

For general hyperbolic conservation laws, \cref{eq:ext_bc_left} is used to compute the spatial derivative of an out-going characteristics.
To obtain an idea how the boundary condition affect the stability of the method, we consider the advection problem $w_t-w_x=0$ on the domain $[0,\;1]$ with either (1) periodic boundary conditions (Cauchy problem) or (2) homogeneous Dirichlet boundary condition at $x=1$ (initial boundary value problem or IBVP).
Denoting the entire solution vector by $\bs{U}=[\overline{w}_{1/2},\,\cdots,\,\overline{w}_{N-1/2},w_0,\,\cdots,w_{N-1}]$, in both cases the semi-discretized method is a linear ODE system:
\begin{equation}\label{eq:ext_bc_ode}
  \bs{U}' = \frac{1}{h}\bs{D}\bs{U}\;.
\end{equation}
In~\cref{fg:ext_bc_eigs}, we plot and compare the eigenvalue distribution of $\bs{D}$ for the Cauchy problem and the IBVP with two selections of the grid size $N$.
\begin{figure}\centering
  \includegraphics[width=.45\textwidth]{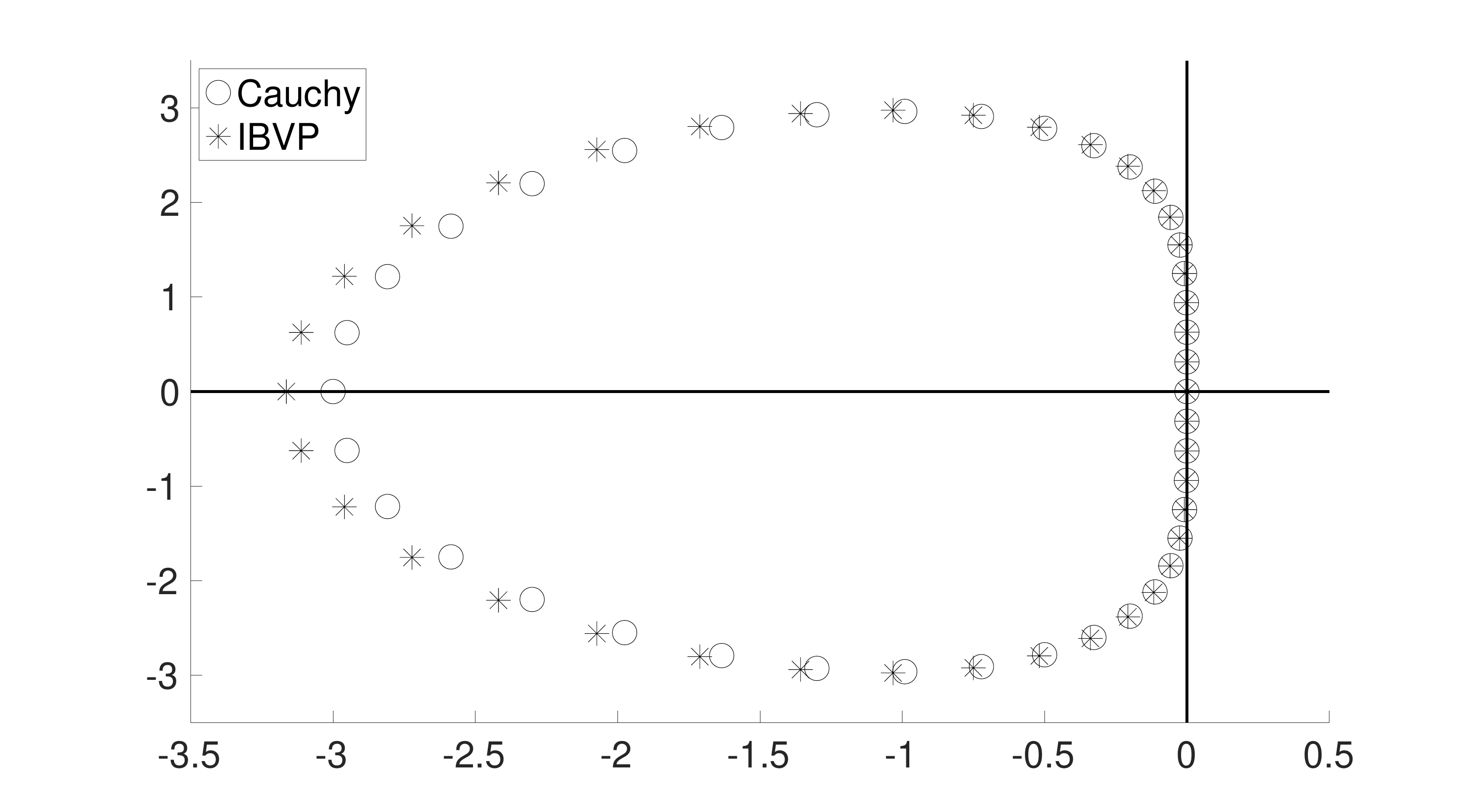}~~
  \includegraphics[width=.45\textwidth]{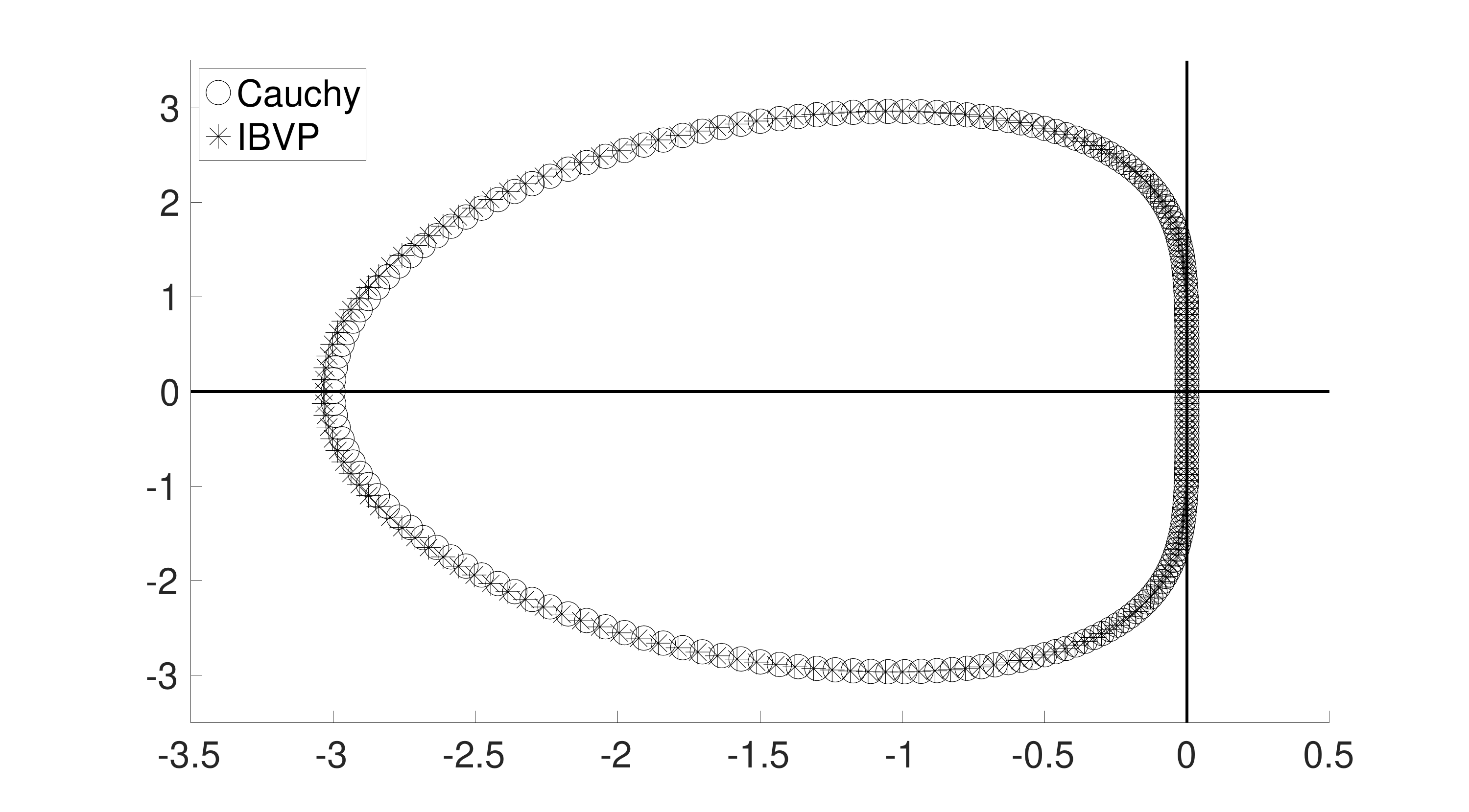}~~
  \caption{Eigenvalue distributions of $\bs{D}$ of~\cref{eq:ext_bc_ode} in the case of the Cauchy problem ($\circ$) and the IBVP ($\ast$) with the grid size equals either $N=20$ (left panel) or $N=100$ (right panel).}
  \label{fg:ext_bc_eigs}
\end{figure}
It appears that the boundary condition~\cref{eq:ext_bc_left} stabilizes the underlying method, and its impact diminishes as $N$ increases.

Lastly in the case of the wall boundary condition (or the slippery wall condition in multiple space dimensions) for the 1D Euler equations, say at $x_0$, we first strongly enforce the zero-velocity condition for $\bs{W}_0$ by setting the momentum to zero and removing its contribution from the total energy:
\begin{equation}\label{eq:ext_bc_wall_node}
  \bs{W}_0 = \begin{bmatrix}
    \rho_0 \\ \rho_0v_0 \\ \rho_0e_0+\frac{1}{2}\rho_0v_0^2
  \end{bmatrix}\quad\longrightarrow\quad
  \bs{W}_0 = \begin{bmatrix}
    \rho_0 \\ 0 \\ \rho_0e_0
  \end{bmatrix}\;,
\end{equation}
then introduce ghost cells that mirror the fluid states near the wall:
\begin{equation}\label{eq:ext_bc_wall_ghst}
  \overline{\bs{W}}_{\mhf{}} = \begin{bmatrix}
    \overline{\rho}_{1/2} \\ -\overline{\rho v}_{1/2} \\ \overline{\rho E}_{1/2}
  \end{bmatrix}\;,\quad
  \bs{W}_{-1} = \begin{bmatrix}
    \rho_1 \\ -\rho_1v_1 \\ \rho_1E_1
  \end{bmatrix}
\end{equation}
 and apply the same interior operator to compute $[\mathcal{D}_x\bs{W}]_0$.

\subsection{A fourth-order HV method for 2D Euler equations on Cartesian grids}
\label{sec:ext_2d}
In extending the methodology to Cartesian grids in multiple space dimensions, we focus on two-dimensional problems in this work although a similar strategy can be used for three-dimensional ones.
To this end, let us denote the grid points by $\bs{x}_{i,j}=(x_i,y_j)=(ih_x,jh_y)$ and the cell centers by $\bs{x}_{\phf{i},\phf{j}}$, respectively; here $h_x$ and $h_y$ are the uniform grid sizes in the $x$-direction and $y$-direction.
We assume that there exists $C_0>1$ such that $h_x/C_0 \le h_y \le C_0h_x$, i.e., the mesh is not severely stretched.  
Denoting the 2D Euler equation in compact form as:
\begin{equation}\label{eq:2d_euler}
  \bs{W}_t + \bs{F}(\bs{W})_x + \bs{G}(\bs{W})_y = \bs{0}\;, 
\end{equation}
the semi-discretized HV method approximates the nodal solutions at grid points and cell averages of each cell:
\begin{equation}\label{eq:2d_var}
  \bs{W}_{i,j}(t) \approx \bs{W}(\bs{x}_{i,j},t)\;,\quad
  \overline{\bs{W}}_{\phf{i},\phf{j}}(t) \approx \frac{1}{h_xh_y}\int_{x_i}^{x_{i+1}}\int_{y_j}^{y_{j+1}}\bs{W}(\bs{x},t)dxdy\;.
\end{equation}
Integrating~\cref{eq:2d_euler} over the cell $[x_i,x_{i+1}]\times[y_j,y_{j+1}]$ one derives the ordinary differential equation for $\overline{\bs{W}}_{\phf{i},\phf{j}}$ as:
\begin{equation}\label{eq:2d_semi_cell}
  \overline{\bs{W}}'_{\phf{i},\phf{j}} + \frac{1}{h_x}\left(\mathcal{J}_j^{j+1}\big|_{i+1}\bs{F}-\mathcal{J}_j^{j+1}\big|_i\bs{F}\right) + \frac{1}{h_y}\left(\mathcal{I}_i^{i+1}\big|_{j+1}\bs{G}-\mathcal{I}_i^{i+1}\big|_j\bs{G}\right) = \bs{0}\;,
\end{equation}
where $\mathcal{J}_j^{j+1}\big|_k\bullet\approx\frac{1}{h_y}\int_{y_j}^{y_{j+1}}\bullet(x_k,y)\,dy$ and $\mathcal{I}_i^{i+1}\big|_k\bullet\approx\frac{1}{h_x}\int_{x_i}^{x_{i+1}}\bullet(x,y_k)\,dx$ approximate the averages of the integral of fluxes along edges.
In this work, we focus on fourth-order methods hence the numerical fluxes for interior edges are computed by:
\begin{subequations}\label{eq:2d_flux_int}
  \begin{align}
    \label{eq:2d_flux_int_x}
    \mathcal{I}_i^{i+1}\big|_k\bs{G} &\eqdef \frac{13}{24}\left[\bs{G}(\bs{W}_{i,k})+\bs{G}(\bs{W}_{i+1,k})\right]-\frac{1}{24}\left[\bs{G}(\bs{W}_{i-1,k}) + \bs{G}(\bs{W}_{i+2,k})\right]\;, \\
    \label{eq:2d_flux_int_y}
    \mathcal{J}_j^{j+1}\big|_k\bs{F} &\eqdef \frac{13}{24}\left[\bs{F}(\bs{W}_{k,j})+\bs{F}(\bs{W}_{k,j+1})\right]-\frac{1}{24}\left[\bs{F}(\bs{W}_{k,j-1}) + \bs{F}(\bs{W}_{k,j+2})\right]\,;
  \end{align}
\end{subequations}
whereas third-order methods are used near the boundaries, for example, at the left and bottom boundaries:
\begin{subequations}\label{eq:2d_flux_bc}
  \begin{align}
    \label{eq:2d_flux_bc_x}
    \mathcal{I}_0^1\big|_k\bs{G} &\eqdef \frac{5}{12}\bs{G}(\bs{W}_{0,k})+\frac{2}{3}\bs{G}(\bs{W}_{1,k})-\frac{1}{12}\bs{G}(\bs{W}_{2,k})\;, \\
    \label{eq:2d_flux_bc_y}
    \mathcal{J}_0^1\big|_k\bs{F} &\eqdef \frac{5}{12}\bs{F}(\bs{W}_{k,0})+\frac{2}{3}\bs{F}(\bs{W}_{k,1})-\frac{1}{12}\bs{F}(\bs{W}_{k,2})\;,
  \end{align}
\end{subequations}
and similar ones are used at the other two boundaries.

The ordinary differential equation for $\bs{W}_{i,j}$ is constructed by:
\begin{equation}\label{eq:2d_semi_node}
  \bs{W}'_{i,j}+\bs{R}_{i,j}\bs{\Lambda}_{i,j}[\mathcal{D}_x\bs{R}_{i,j}^{-1}\bs{W}]_{i,j}+\bs{Q}_{i,j}\bs{\Upsilon}_{i,j}[\mathcal{D}_y\bs{Q}_{i,j}^{-1}\bs{W}]_{i,j} = \bs{0}\;.
\end{equation}
Here $\pp{\bs{F}(\bs{W}_{i,j})}{\bs{W}}=\bs{R}_{i,j}\bs{\Lambda}_{i,j}\bs{R}_{i,j}^{-1}$ and $\pp{\bs{G}(\bs{W}_{i,j})}{\bs{W}}=\bs{Q}_{i,j}\bs{\Upsilon}_{i,j}\bs{Q}_{i,j}^{-1}$ are the eigenvalue decompositions of the $x$-flux function and $y$-flux function, respectively; and the operators $[\mathcal{D}_x]_{i,j}$ and $[\mathcal{D}_y]_{i,j}$ are computed the same way as the DDO $[\mathcal{D}_x]$ described in~\cref{sec:hv} and next.

Let $w$ be a generic scalar function and we focus on constructing $[\mathcal{D}_xw]_{i,j}$ for a right-going wave using a left-biased stencil, then:
\begin{displaymath}
  [\mathcal{D}_xw]_{i,j} = \frac{1}{h_x}\left(w_{i-1,j}-\frac{7}{2}\overline{w}_{\mhf{i},j}+2w_{i,j}+\frac{1}{2}\overline{w}_{\phf{i},j}\right)\;.
\end{displaymath}
All variables in the right hand side except the edge averages $\overline{w}_{\mhf{i},j}$ and $\overline{w}_{\phf{i},j}$ are known; and these missing terms are approximated using nearby cell averages.
In particular for horizontal edges, we have the fourth-order approximation
\begin{equation}\label{eq:2d_edge_int}
  \overline{w}_{\phf{i},j} = \frac{7}{12}\left[\overline{w}_{\phf{i},\mhf{j}}+\overline{w}_{\phf{i},\phf{j}}\right] - \frac{1}{12}\left[\overline{w}_{\phf{i},\mhfg{j}{3}}+\overline{w}_{\phf{i},\phfg{j}{3}}\right]
\end{equation}
in the interior of the domain and the third-order approximations:
\begin{align}
  \label{eq:2d_edge_bc_0}
  \overline{w}_{\phf{i},0} &= \frac{11}{6}\overline{w}_{\phf{i},1/2} - \frac{7}{6}\overline{w}_{\phf{i},3/2} + \frac{1}{3}\overline{w}_{\phf{i},5/2}\;, \\
  \label{eq:2d_edge_bc_1}
  \overline{w}_{\phf{i},1} &= \frac{1}{3}\overline{w}_{\phf{i},1/2} + \frac{5}{6}\overline{w}_{\phf{i},3/2} - \frac{1}{6}\overline{w}_{\phf{i},5/2}\;.
\end{align}
near the bottom boundary.
The edge average of horizontal edges near the upper boundary as well as the edge averages of vertical edges can be computed similarly.

The wall boundary condition is handled similarly as in the 1D case (see the end of~\cref{sec:ext_bc}); particularly we first enforce the zero normal velocity condition strongly at nodes on the wall, then introduce one layer of ghost cells and define ghost nodal and cell-averaged values that mirror the nearby fluid states, and use these ghost values to compute the edge averages along the wall as well as the discrete differential operators.

\subsection{Entropy viscosity for 2D Euler equations on Cartesian grids}
\label{sec:ext_ev}
In this section we briefly discuss the entropy viscosity for 2D problems.
Let the artificial viscosity be denoted $\nu_h$ as before, the regularized equation is given by:
\begin{equation}\label{eq:ext_ev_eqn}
  \bs{W}_t + \bs{F}(\bs{W})_x + \bs{G}(\bs{W})_y - (\nu_h(\bs{A}_1\bs{W}_x+\bs{A}_2\bs{W}_y))_x - (\nu_h(\bs{B}_1\bs{W}_x+\bs{B}_2\bs{W}_y))_x = 0\;, 
\end{equation}
where for Euler equations the scaling matrices are:
\begin{equation}\label{eq:ext_ev_scale}
  \begin{array}{ll}
  \bs{A}_1 = \begin{bmatrix}
    1 & 0 & 0 & 0 \\
    -v_1 & 1 & 0 & 0 \\
    -v_2/2 & 0 & 1/2 & 0 \\
    -E+v_2^2/2 & 0 & -v_2/2 & 1
  \end{bmatrix}\,,\quad &
  \bs{A}_2 = \begin{bmatrix}
    0 & 0 & 0 & 0 \\
    0 & 0 & 0 & 0 \\
    -v_1/2 & 1/2 & 0 & 0 \\
    -v_1v_2/2 & v_2/2 & 0 & 0 
  \end{bmatrix}\,, \\
  \bs{B}_1 = \begin{bmatrix}
    0 & 0 & 0 & 0 \\
    -v_2/2 & 0 & 1/2 & 0 \\
    0 & 0 & 0 & 0 \\
    -v_1v_2/2 & 0 & v_1/2 & 0
  \end{bmatrix}\,,\quad & 
  \bs{B}_2 = \begin{bmatrix}
    1 & 0 & 0 & 0 \\
    -v_1/2 & 1/2 & 0 & 0 \\
    -v_2 & 0 & 1 & 0 \\
    -E+v_1^2/2 & -v_1/2 & 0 & 1
  \end{bmatrix}\,.
  \end{array}
\end{equation}
Here $v_1$ and $v_2$ are the two components of the velocity vector $\bs{v}$.

As in the 1D case, the diffusion terms are discretized using second-order central difference schemes.
Particularly in the cell-averaged equation for $\overline{\bs{W}}_{\phf{i},\phf{j}}$, there is:
\begin{align*}
  (\nu_h\bs{A}_1\bs{W}_x)_x &\approx
  \frac{
    \nu_{i+1,\phf{j}}\bs{A}_{1;i+1,\phf{j}}[\mathcal{D}_x^c\bs{W}]_{i+1,\phf{j}}-
    \nu_{i,\phf{j}}\bs{A}_{1;i,\phf{j}}[\mathcal{D}_x^c\bs{W}]_{i,\phf{j}}
  }{h_x}\;, \\
  (\nu_h\bs{A}_2\bs{W}_y)_x &\approx
  \frac{
    \nu_{i+1,\phf{j}}\bs{A}_{2;i+1,\phf{j}}[\mathcal{D}_y^c\bs{W}]_{i+1,\phf{j}}-
    \nu_{i,\phf{j}}\bs{A}_{2;i,\phf{j}}[\mathcal{D}_y^c\bs{W}]_{i,\phf{j}}
  }{h_x}\;, \\
  (\nu_h\bs{B}_1\bs{W}_x)_y &\approx
  \frac{
    \nu_{\phf{i},j+1}\bs{B}_{1;\phf{i},j+1}[\mathcal{D}_x^c\bs{W}]_{\phf{i},j+1}-
    \nu_{\phf{i},j}\bs{B}_{1;\phf{i},j}[\mathcal{D}_x^c\bs{W}]_{\phf{i},j}
  }{h_y}\;, \\
  (\nu_h\bs{B}_2\bs{W}_y)_y &\approx
  \frac{
    \nu_{\phf{i},j+1}\bs{B}_{2;\phf{i},j+1}[\mathcal{D}_y^c\bs{W}]_{\phf{i},j+1}-
    \nu_{\phf{i},j}\bs{B}_{2;\phf{i},j}[\mathcal{D}_y^c\bs{W}]_{\phf{i},j}
  }{h_y}\;.
\end{align*}
Here $\nu$ at half indices are computed as the average of nodal values, like $\nu_{i,\phf{j}}=(\nu_{i,j}+\nu_{i,j+1})/2$, and similarly the scaling matrices are evaluated using the average of nodal solutions, such as $\bs{A}_{1;i,\phf{j}}=\bs{A}_1((\bs{W}_{i,j+1}+\bs{W}_{i,j})/2)$.
The central HV-DDOs are computed similarly as in 1D case:
\begin{align*}
  [\mathcal{D}_x^c\bs{W}]_{i,\phf{j}} = \frac{1}{h_x}\left[\overline{\bs{W}}_{\phf{i},\phf{j}}-\overline{\bs{W}}_{\mhf{i},\phf{j}}\right]\;,\quad 
  [\mathcal{D}_y^c\bs{W}]_{i,\phf{j}} = \frac{1}{h_y}\left[\bs{W}_{i,j+1}-\bs{W}_{i,j}\right]\;;
\end{align*}
$[\mathcal{D}_x^c\bs{W}]_{\phf{i},j}$ and $[\mathcal{D}_y^c\bs{W}]_{\phf{i},j}$ are computed similarly.

To discretization the diffusion term in the nodal equation of $\bs{W}_{i,j}$, we again freeze the viscosity coefficient and compute:
\begin{align*}
  (\nu_h\bs{A}_1\bs{W}_x)_x &\approx \nu_{i,j}\left[\bs{A}_{1}(\bs{W}_{i,j})[\mathcal{D}_{xx}^c\bs{W}]_{i,j}+\left(\pp{\bs{A}_1(\bs{W}_{i,j})}{\bs{W}}:[\mathcal{D}_x^c\bs{W}]_{i,j}\right)[\mathcal{D}_x^c\bs{W}]_{i,j}\right]\;, \\
  (\nu_h\bs{A}_2\bs{W}_y)_x &\approx \nu_{i,j}\left[\bs{A}_{2}(\bs{W}_{i,j})[\mathcal{D}_{yx}^c\bs{W}]_{i,j}+\left(\pp{\bs{A}_2(\bs{W}_{i,j})}{\bs{W}}:[\mathcal{D}_x^c\bs{W}]_{i,j}\right)[\mathcal{D}_y^c\bs{W}]_{i,j}\right]\;, \\
  (\nu_h\bs{B}_1\bs{W}_x)_y &\approx \nu_{i,j}\left[\bs{B}_{1}(\bs{W}_{i,j})[\mathcal{D}_{xy}^c\bs{W}]_{i,j}+\left(\pp{\bs{B}_1(\bs{W}_{i,j})}{\bs{W}}:[\mathcal{D}_y^c\bs{W}]_{i,j}\right)[\mathcal{D}_x^c\bs{W}]_{i,j}\right]\;, \\
  (\nu_h\bs{B}_2\bs{W}_y)_y &\approx \nu_{i,j}\left[\bs{B}_{2}(\bs{W}_{i,j})[\mathcal{D}_{yy}^c\bs{W}]_{i,j}+\left(\pp{\bs{B}_2(\bs{W}_{i,j})}{\bs{W}}:[\mathcal{D}_y^c\bs{W}]_{i,j}\right)[\mathcal{D}_y^c\bs{W}]_{i,j}\right]\;, 
\end{align*}
where the computation of the central HV-DDOs are illustrated as below:
\begin{align*}
  &[\mathcal{D}_x^c\bs{W}]_{i,j} = \frac{1}{h_x}\left[\overline{\bs{W}}_{\phf{i},j}-\overline{\bs{W}}_{\mhf{i},j}\right]\;,\quad
  [\mathcal{D}_{xx}^c\bs{W}]_{i,j} = \frac{1}{h_x^2}\left[3\overline{\bs{W}}_{\mhf{i},j}-6\bs{W}_{i,j}+3\overline{\bs{W}}_{\phf{i},j}\right]\;, \\
  &[\mathcal{D}_{xy}^c\bs{W}]_{i,j} = [\mathcal{D}_{yx}^c\bs{W}]_{i,j} = \frac{1}{h_xh_y}\left[\overline{\bs{W}}_{\mhf{i},\mhf{j}}-\overline{\bs{W}}_{\mhf{i},\phf{j}}-\overline{\bs{W}}_{\phf{i},\mhf{j}}+\overline{\bs{W}}_{\phf{i},\phf{j}}\right]\;.
\end{align*}

Lastly, the artificial viscosity $\nu_h$ is constructed following essentially the same recipe in~\cref{sec:hv}; and we only describe here the construction of cellular entropy residual and the benchmark von Neumann-Richtmyer viscosity.
Following the discussion at the end of~\cref{sec:rcv_vh}, a second-order cellular entropy residual is needed to preserve the formal fourth-order accuracy of the method when the solutions are smooth.
To this end, at semi-discrete level we compute:
%\begin{align}
%  \notag
%  \oname{Res}_{\phf{i},\phf{j}}S =&\ \frac{d\overline{S}_{\phf{i},\phf{j}}}{dt} + \frac{\eta_1(\bs{W}_{i+1,j})+\eta_1(\bs{W}_{i+1,j+1})-\eta_1(\bs{W}_{i,j})-\eta_1(\bs{W}_{i,j+1})}{2h_x} \\
%  \label{eq:ext_ev_res}
%  &\ + \frac{\eta_2(\bs{W}_{i,j+1})+\eta_2(\bs{W}_{i+1,j+1})-\eta_2(\bs{W}_{i,j})-\eta_2(\bs{W}_{i+1,j})}{2h_y}\;,
%\end{align}
\begin{align}
  \notag
  \oname{Res}_{\phf{i},\phf{j}}s =&\ \frac{d\overline{s}_{\phf{i},\phf{j}}}{dt} + \overline{v}_{1,\phf{i},\phf{j}}
  \frac{s(\bs{W}_{i+1,j})+s(\bs{W}_{i+1,j+1})-s(\bs{W}_{i,j})-s(\bs{W}_{i,j+1})}{2h_x} \\
  \label{eq:ext_ev_res}
  &\ + \overline{v}_{2,\phf{i},\phf{j}}\frac{s(\bs{W}_{i,j+1})+s(\bs{W}_{i+1,j+1})-s(\bs{W}_{i,j})-s(\bs{W}_{i+1,j})}{2h_y}\;,
\end{align}
where as before $\overline{s}_{\phf{i},\phf{j}}$, $\overline{v}_{1,\phf{i},\phf{j}}$, and $\overline{v}_{2,\phf{i},\phf{j}}$ are $s$, $v_1$, and $v_2$ computed from $\overline{\bs{W}}_{\phf{i},\phf{j}}$.
Following the same strategy as described in~\cref{sec:rcv_vh}, one can then compute the fully discretized residual $\oname{Res}_{\phf{i},\phf{j}}^ns$ and process it to obtain $Z_{\phf{i},\phf{j}}^n$ and $\mathcal{M}\!\circ\!\mathcal{S}\!\circ\!\mathcal{N}(Z_{\phf{i},\phf{j}}^n)$; and the cellular artificial viscosity computes as:
\begin{equation}\label{eq:ext_ev_vh}
  \nu_{\phf{i},\phf{j}}^n = \frac{h v_{\max}}{4}(\mathcal{M}\!\circ\!\mathcal{S}\!\circ\!\mathcal{N})(Z_{\phf{i},\phf{j}}^n)\;,
\end{equation}
where $h=(h_x\abs{v_1}+h_y\abs{v_2})/\abs{\bs{v}}$ is the reference mesh length in the direction of flow velocity and $v_{\max} = \abs{\bs{v}}+c_s$ is the largest characteristic velocity; both terms are evaluated using the cell-average $\bs{W}_{\phf{i},\phf{j}}^n$.

\section{Numerical examples}
\label{sec:num}
In this section we assess the numerical performance of the proposed method with various one-dimensional and two-dimensional problems.
Unless otherwise noted, the Courant number is fixed to $\alpha_{\cfl}=0.6$ and the parameter $z_0$ that determines the artificial viscosity is selected $z_0=0.04$.
Most of the tests are governed by Euler equations; in other cases, the choice of the entropy in the artificial viscosity computation will be discussed therein.

\subsection{One-dimensional tests}
\label{sec:num_1d}
The first three 1D tests are governed by the linear advection equation $w_t+\lambda w_x=0$ with the entropy variables chosen as $s(w) = S(w) = w^2/2$; and for the purpose of computing the artificial viscosity as shown in~\cref{eq:rcv_res_semi} and~\cref{eq:rcv_vh}, we take $\overline{v}\equiv\lambda$ and $\overline{\rho}\equiv1$.
Particularly,~\cref{sec:num_1d_cauchy} and~\cref{sec:num_1d_ibvp} evaluate the accuracy of the method with periodic and Dirichlet type boundary conditions, respectively; and~\cref{sec:num_1d_gste} assesses the performance advecting various profiles including discontinuous ones.
Then the standard Euler equations are solved, with~\cref{sec:num_1d_col} and~\cref{sec:num_1d_wall} focusing on the numerical accuracy with periodic and wall boundary conditions whereas~\cref{sec:num_1d_sod} concerns a benchmark shock tube test.

\subsubsection{Linear advection: A Cauchy problem}
\label{sec:num_1d_cauchy}
We solve the equation $w_t+w_x=0$ on the domain $(x,t)\in[0,\,1]\times[0,\,1]$ with periodic boundary condition and the initial condition:
\begin{equation}\label{eq:num_1d_cauchy_ic}
  w(x,0) = \sin(2\pi x) + \cos(4\pi x)\;.
\end{equation}
At $T=1$ the exact solution is the same as the initial data, which is used to compute the numerical errors.
In~\cref{fg:num_1d_cauchy} we plot the $L_1$-norms of numerical errors measured in logarithmic scale on a sequence of 7 uniform grids with number of cells ranging from $40$ to $2560$, using different values of $z_0$.
Recall that {\it higher} $z_0$ indicates activation of the von Neumann viscosity at {\it stronger} shocks, in which case the method is {\it less} impacted by the artificial viscosity; in the figure, the convergence curves corresponding to $z_0=0.01$, $z_0=0.02$, $z_0=0.04$ (default choice of this paper), $z_0=0.08$, $z_0=1.0$ (the artificial viscosity is completely determined by the entropy residual), and no artificial viscosity at all.
\begin{figure}\centering
  \includegraphics[trim=.6in .0in 1.6in .6in, clip, width=.48\textwidth]{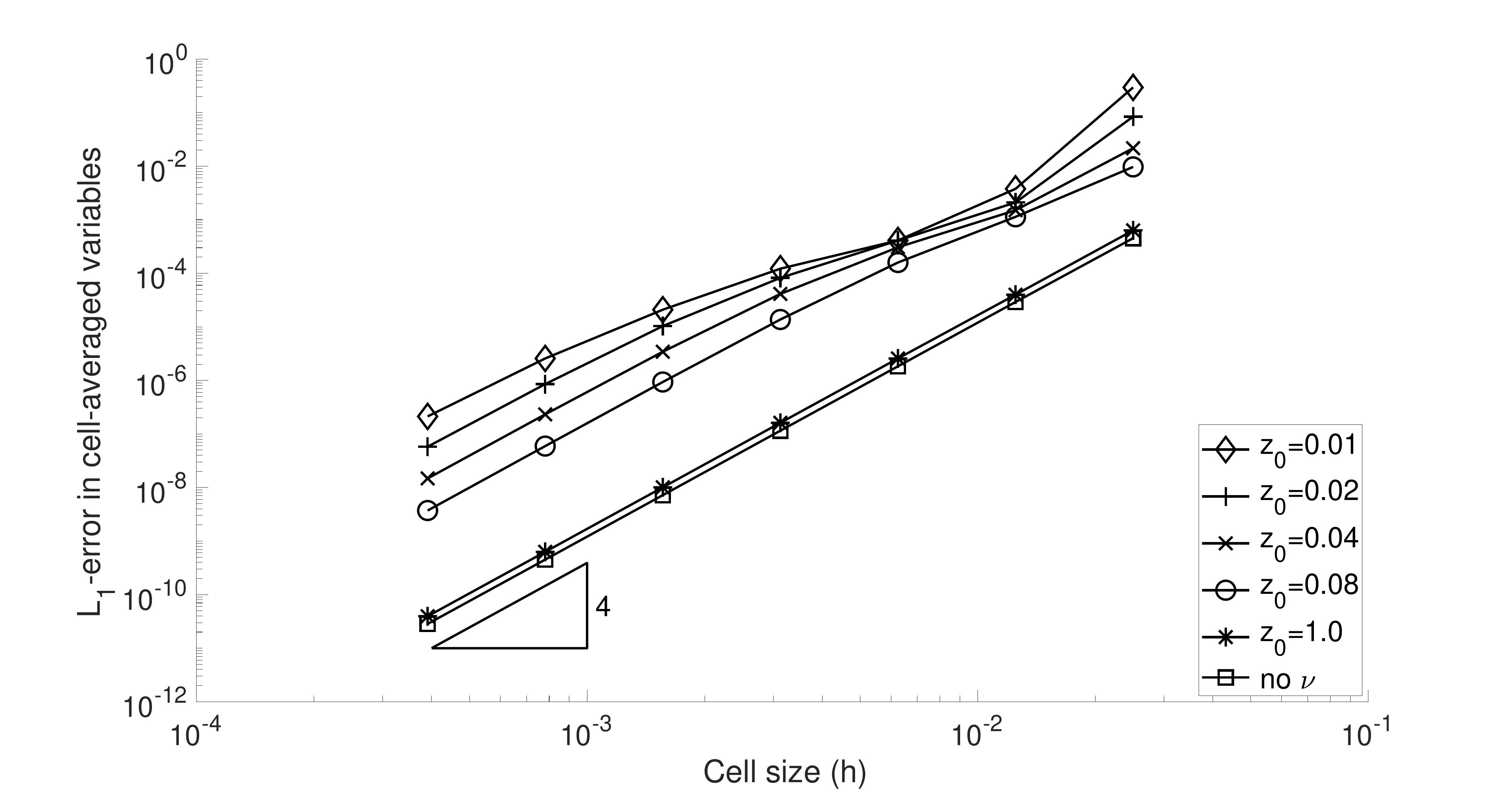} ~~~~
  \includegraphics[trim=.6in .0in 1.6in .6in, clip, width=.48\textwidth]{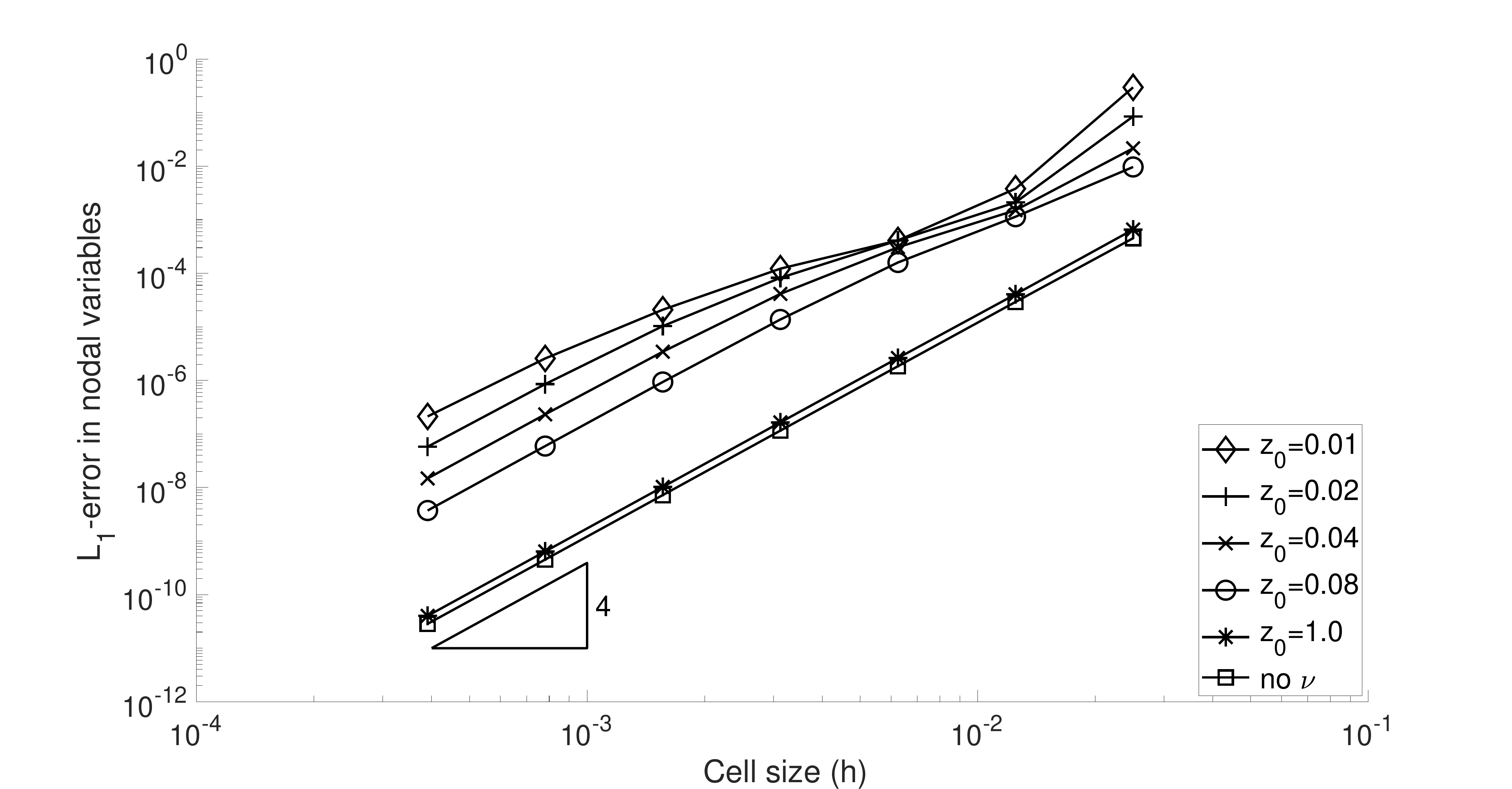}
  \caption{Convergence curves of numerical errors (logarithmic scale) computed in solving the Cauchy problem of linear advection equation (\cref{sec:num_1d_cauchy}) with various $z_0$ values.}
  \label{fg:num_1d_cauchy}
\end{figure}
Note that the numerical errors in cell-averaged values (left panel) and nodal values (right panel) are very close.

\subsubsection{Linear advection: An initial boundary value problem}
\label{sec:num_1d_ibvp}
Next essentially the same problem as before is solved, except that instead of a periodic boundary condition we enforce the following Dirichlet condition at $x=0$: $w(0,t) = -\sin(2\pi t) + \cos(4\pi t)$.
The same sequence of grids are used to compute the numerical solutions, whose errors are measured in $L_1$-norm and plotted in logarithmic scale in~\cref{fg:num_1d_ibvp}.
\begin{figure}\centering
  \includegraphics[trim=.6in .0in 1.6in .6in, clip, width=.48\textwidth]{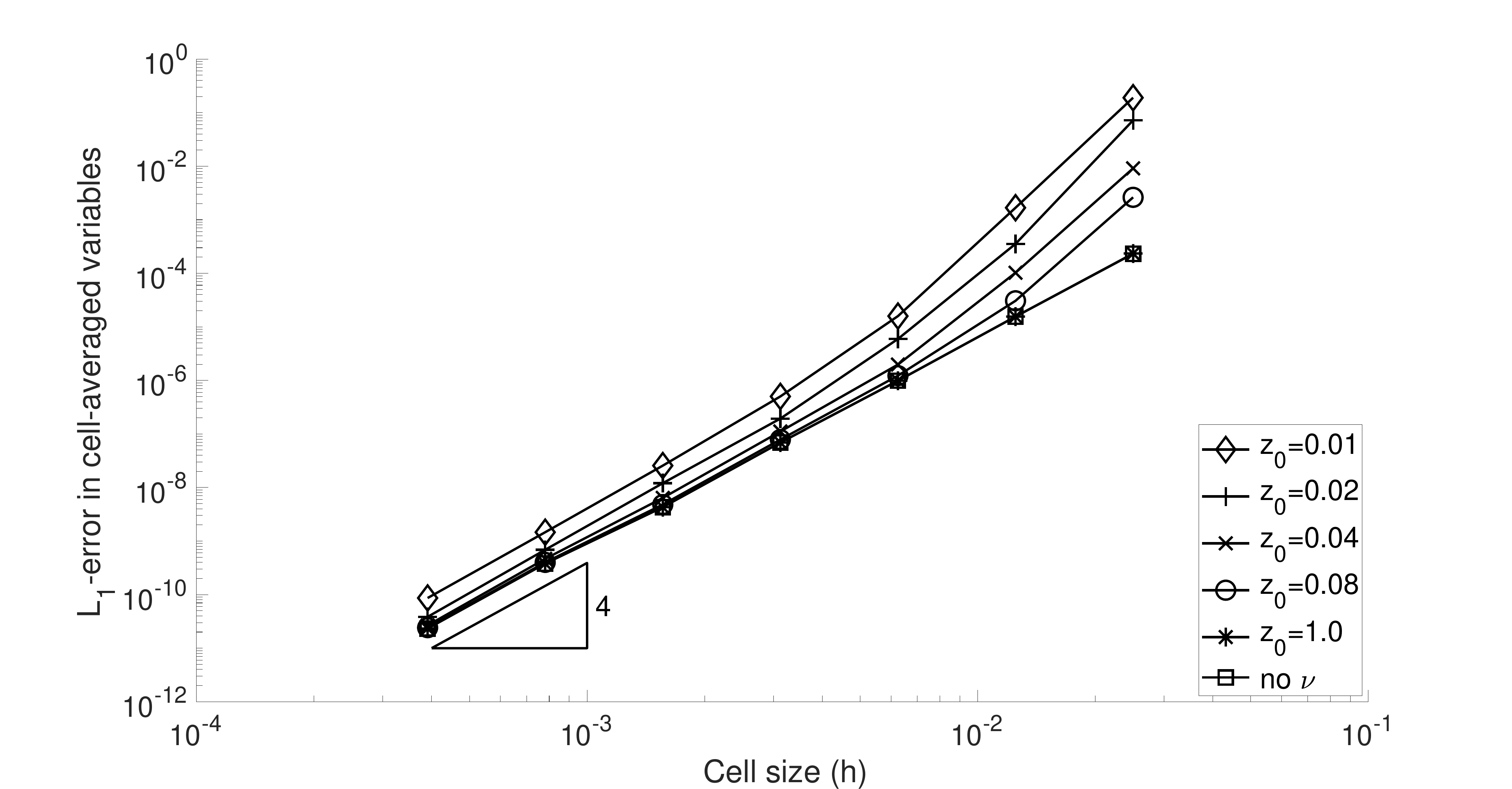} ~~~~
  \includegraphics[trim=.6in .0in 1.6in .6in, clip, width=.48\textwidth]{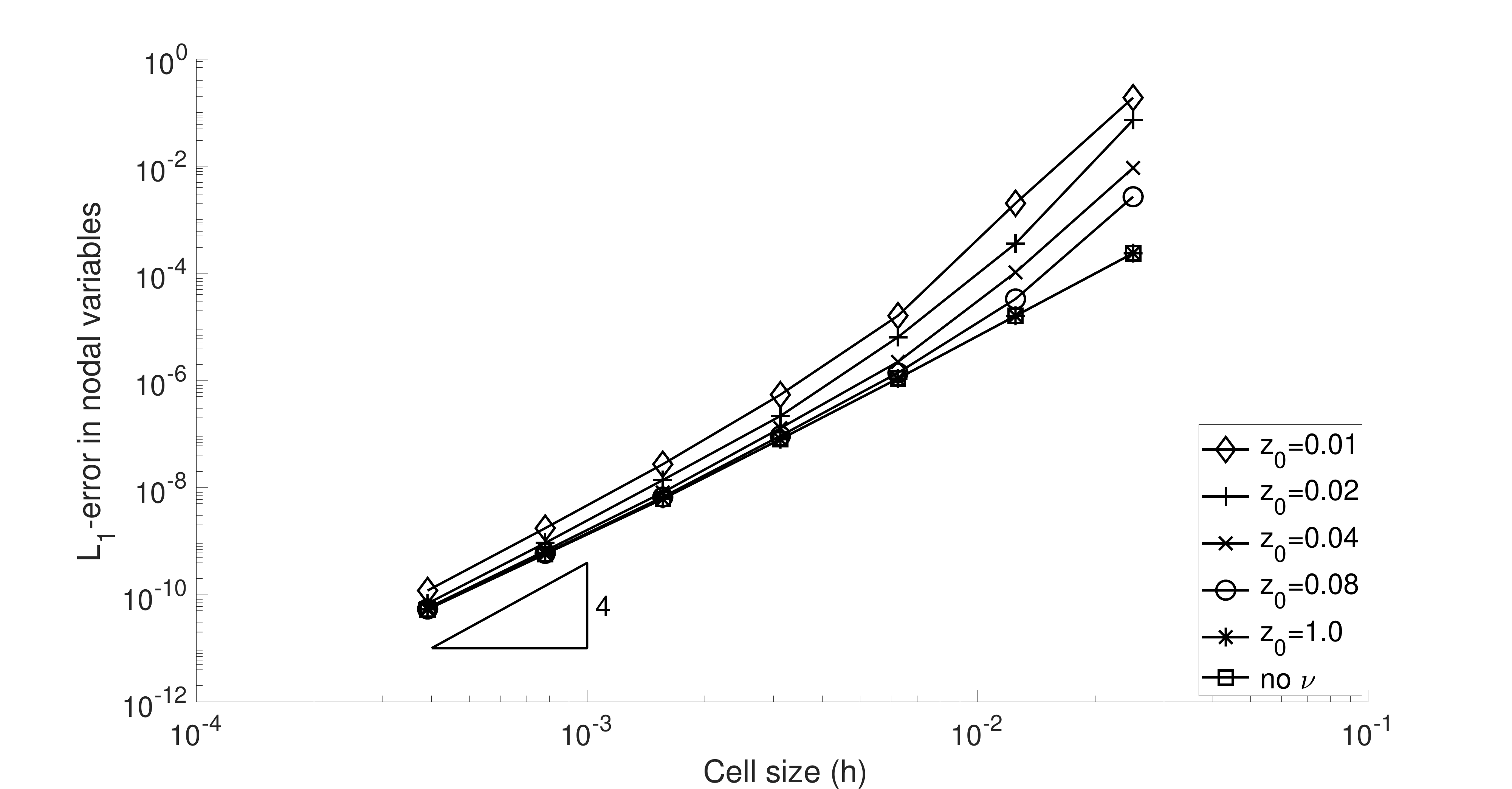}
  \caption{Convergence curves of numerical errors (logarithmic scale) computed in solving the initial boundary value problem of linear advection equation (\cref{sec:num_1d_ibvp}) with various $z_0$ values.}
  \label{fg:num_1d_ibvp}
\end{figure}
The convergence rate for nodal variables is slightly less than fourth-order whereas that for cell-averaged variables are fourth-order as predicted.

%The convergence curves in this and previous sub-sections confirm the formal fourth-order of accuracy of the proposed method.

\subsubsection{Linear advection of Gaussian, square, sharp triangle, and half ellipse waves}
\label{sec:num_1d_gste}
In the last 1D linear advection problem we assess the performance of the HV method in handling various types of discontinuities.
Particularly we solve $w_t+w_x=0$ on $(x,t)\in[-1,\,1]\times[0,\,8]$ with periodic boundary condition, and an initial wave that is composed of smooth but narrow Gaussians, a square wave, a sharp triangle wave, and a half ellipse~\cite{GSJiang:1996a}:
\begin{equation}\label{eq:num_1d_gste_ic}
  w(x,0) = \left\{\begin{array}{lcl}
    \frac{1}{6}\left[G_{\beta,\,z-\delta}(x)+4G_{\beta,\,z}(x)+G_{\beta,\,z+\delta}(x)\right]\,, & & -0.8\le x\le-0.6\;; \\ \vspace*{-.15in} \\
    1\,, & & -0.4\le x\le-0.2\;; \\ \vspace*{-.15in} \\
    1-\abs{10(x-0.1)}\,, & & 0\le x\le0.2\;; \\ \vspace*{-.15in} \\
    \frac{1}{6}\left[L_{\alpha,\,a-\delta}(x)+4L_{\alpha,\,a}(x)+L_{\alpha,\,a+\delta}(x)\right]\,, & & 0.4\le x\le0.6\;; \\ \vspace*{-.15in} \\
    0\,, & & \textrm{otherwise}\;.
  \end{array}\right.\,,
\end{equation}
where $G_{\beta,\,z}(x)\eqdef e^{-\beta(x-z)^2}$ and $L_{\alpha,\,a}(x)\eqdef\sqrt{\max(1-\alpha^2(x-a)^2,0)}$, and the constants are taken as $a=0.5$, $z=-0.7$, $\delta=0.005$, $\alpha=10$, and $\beta=\log(2)/(36\delta^2)$.
In~\cref{fg:num_1d_gste} we plot the HV solution obtained without or with the artificial viscosity on a uniform grid with $320$ cells and compare them to those obtained by a finite volume method (MUSCL~\cite{BvanLeer:1979a} with van Albada slope limiter~\cite{GDvanAlbada:1982a} and Rusanov numerical flux~\cite{VVRusanov:1962a}) using either the same grid ($320$ cells) or the same number of unknowns ($640$ cells).
\begin{figure}\centering
  \includegraphics[width=\textwidth]{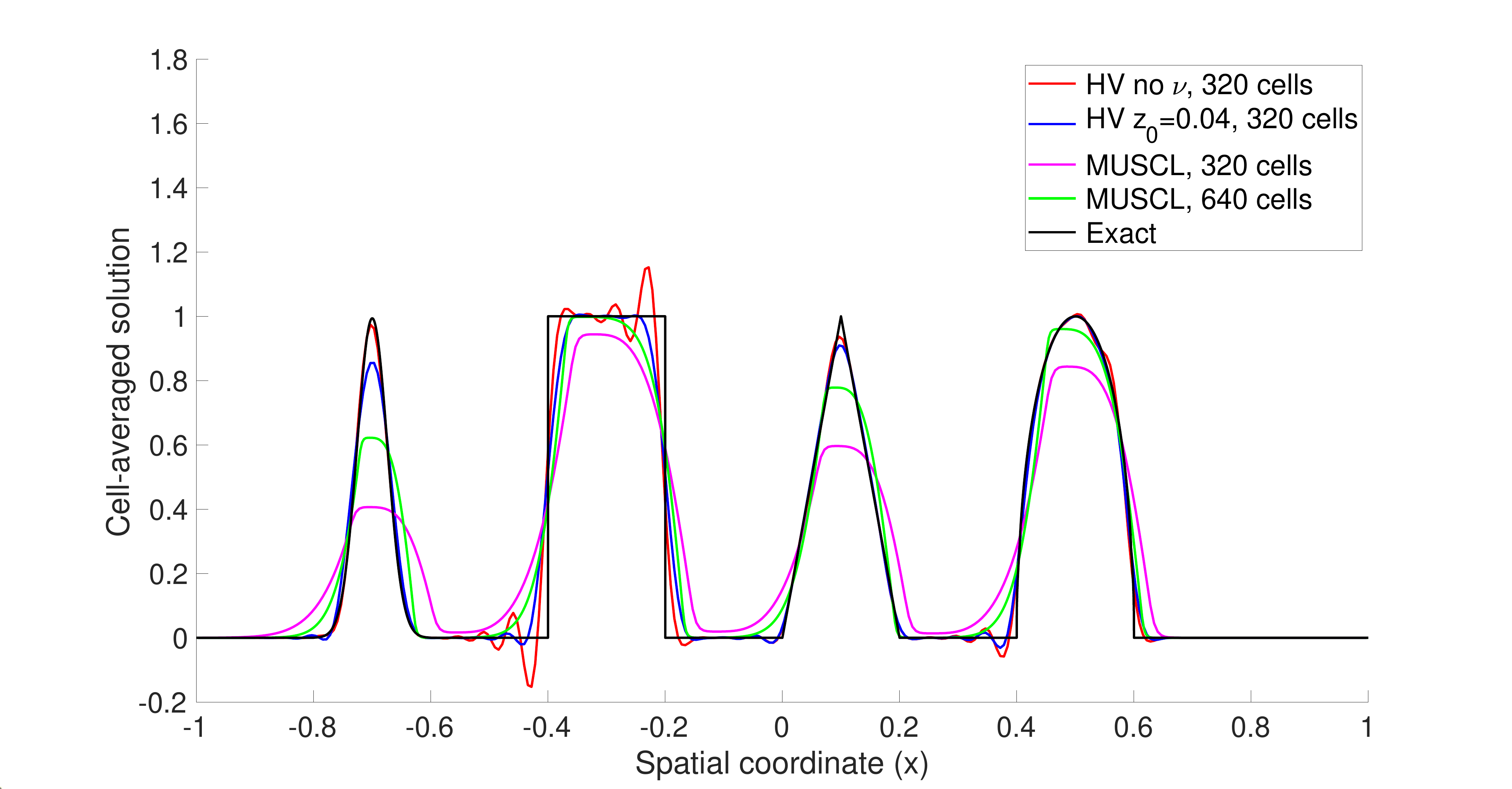}
  \caption{Cell-averaged solutions obtained by the HV method without (red curve) or with artificial viscosity (blue curve), MUSCL with the same grid (magenta curve), MUSCL with the same number of unknowns (green curve), and the exact solution (black curve).}
  \label{fg:num_1d_gste}
\end{figure}
One observes that the HV method provides much better accuracy than MUSCL whether using the same mesh resolution or the same number of unknowns; and comparing the two HV computations, the artificial viscosity effectively suppresses the spurious oscillations in the upstream direction of discontinuities.

\subsubsection{Euler equations: A Cauchy problem of wave collision}
\label{sec:num_1d_col}
Moving on to gas dynamics, we consider the collision between two symmetric bumps~\cite{MMHasan:2023a} by solving the Euler equations with specific heat ratio $\gamma=1.4$ on a periodic domain $\Omega=[-2,\,2]$ and the initial condition:
\begin{equation}\label{eq:num_1d_col_ic}
  \rho(x,0) = 1.4+1.4\epsilon B(x)\,,\quad
  v(x,0) = 0.0\,,\quad
  p(x,0) = 1+\epsilon B(x)\;,
\end{equation}
where $\epsilon=0.1$ and the symmetric bump $B(x)$ is given by:
\begin{equation}\label{eq:num_1d_col_bump}
  B(x) = \left\{\begin{array}{lcl}
    \left[\frac{1}{2}\left(1-\cos(2\pi(x+0.5))\right)\right]^4 & , & -1.5\le x\le-0.5 \\
    \left[\frac{1}{2}\left(1-\cos(2\pi(x-0.5))\right)\right]^4 & , & 0.5\le x\le1.5 \\
    0.0 & , & \textrm{otherwise}.
  \end{array}\right.
\end{equation}
Two symmetric waves moving in opposite directions are originated from the bump centered at $x=-0.5$ as well as the bump centered at $x=0.5$, respectively.
The two outward moving waves run into each other at $x=-2$ (or $x=2$) when $T\approx0.64$ and the two inward moving waves collide with each other at $x=0$ about the same time (see the left panel of~\cref{fg:num_1d_col_prs}).
This problem admits a smooth solution at least until $T=1.2$ (see the right panel of~\cref{fg:num_1d_col_prs}), when the numerical solution is computed and compared to a reference one to assess the numerical accuracy.
In these plots, little arrows are added above the bumps to denote their moving directions.
The reference solution is computed using an extremely fine grid with $20480$ cells, which is used to compute the numerical errors of the solutions obtained on a sequence of 7 uniform grids with number of cells ranging from $40$ to $2560$, with their $L_1$-norms plotted in logarithmic scales by the solid curves in~\cref{fg:num_1d_col}.
\begin{figure}\centering
  \includegraphics[trim=.6in .0in 1.6in .6in, clip, width=.48\textwidth]{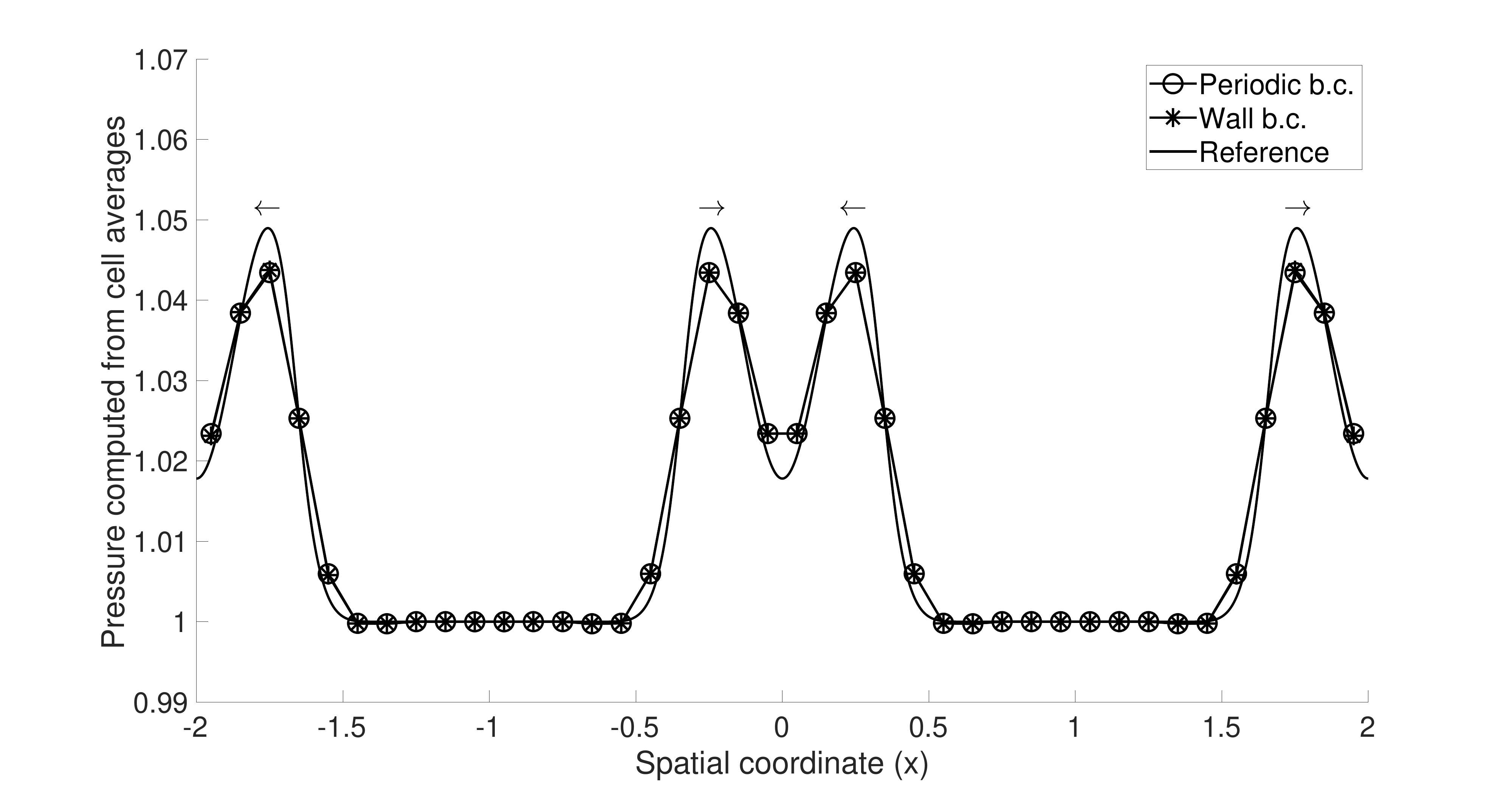} ~~~~
  \includegraphics[trim=.6in .0in 1.6in .6in, clip, width=.48\textwidth]{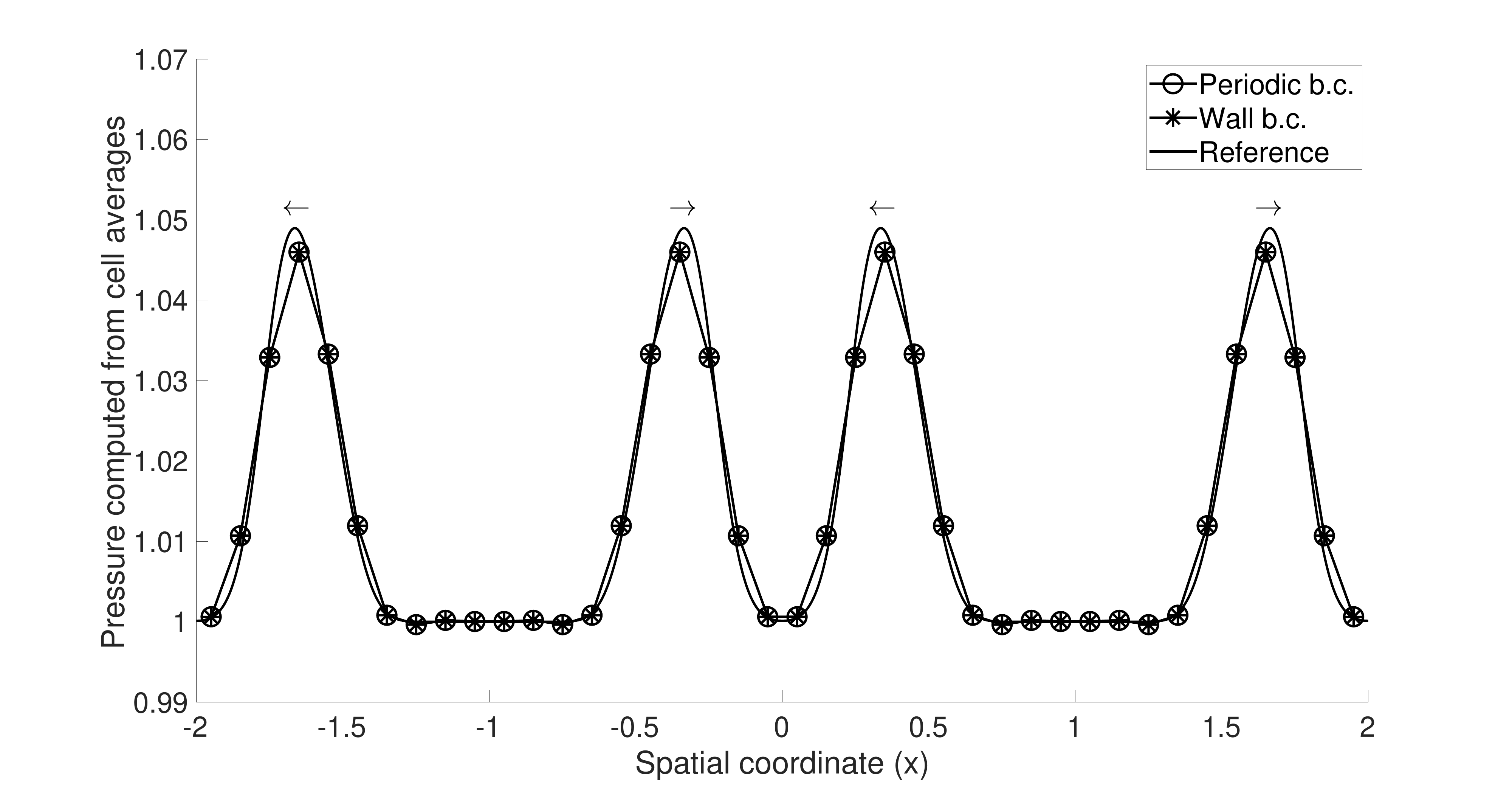}
  \caption{Pressure at $T=0.64$ right before the waves collide (left panel) and at $T=1.20$ (right panel). 
    The computations using periodic boundary conditions and wall boundary conditions are denoted by squares and asteroids, respectively.
    Only cell-averaged solutions are used to make the plots; all computations use a uniform grid of $40$ cells and the entropy viscosity with $z_0=0.04$.}
    \label{fg:num_1d_col_prs}
\end{figure}

\subsubsection{Euler equations: Wave collision with walls}
\label{sec:num_1d_wall}
Due to the symmetry of the previous problem, one can replace the periodic boundary condition with the wall boundary condition and obtain a mathematically equivalent problem.
Using the same sequence of grids, we compute the numerical solutions with wall boundaries and plot them in the same figures as before.
\begin{figure}\centering
  \includegraphics[trim=.6in .0in 1.6in .6in, clip, width=.48\textwidth]{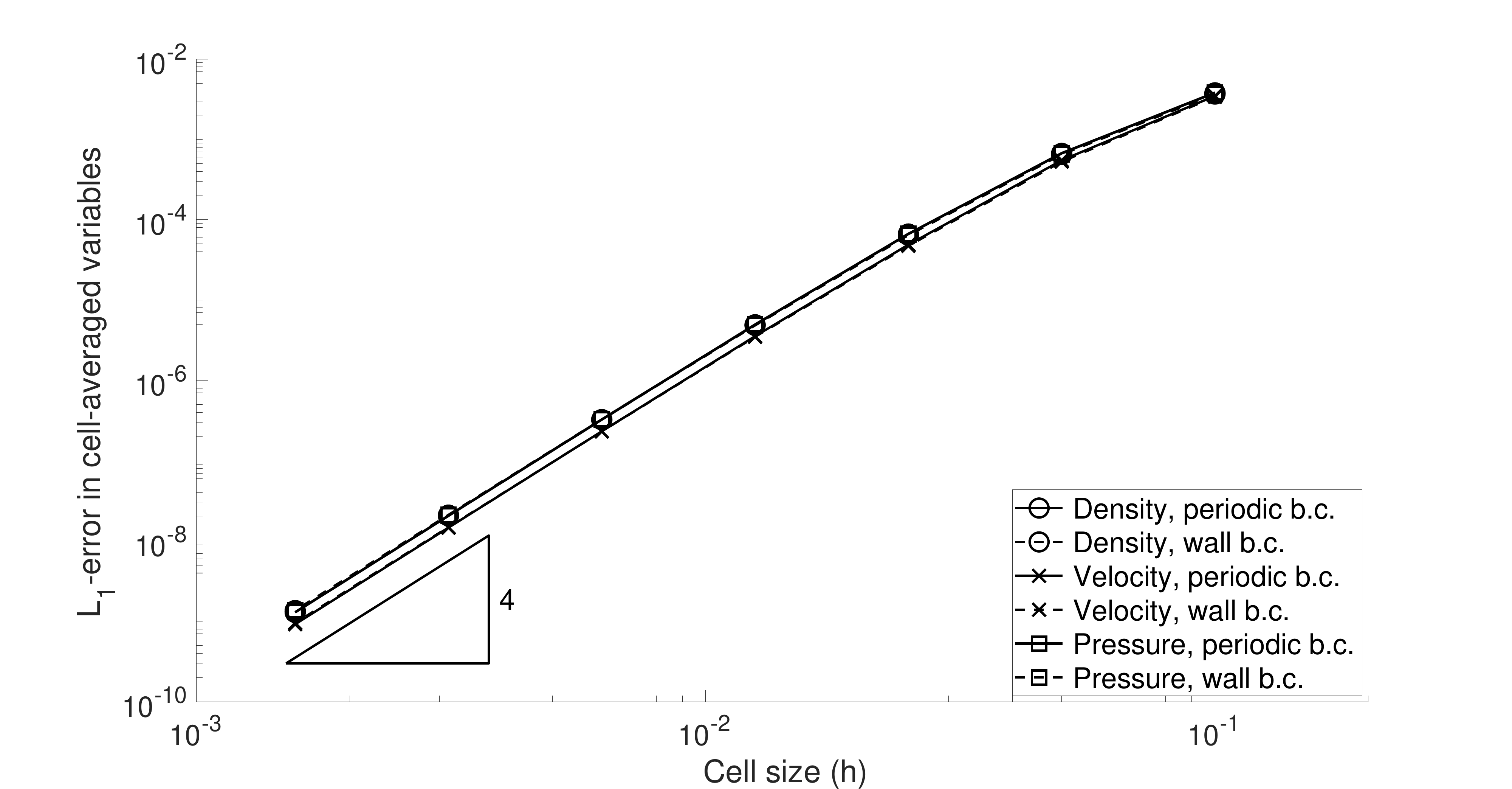} ~~~~
  \includegraphics[trim=.6in .0in 1.6in .6in, clip, width=.48\textwidth]{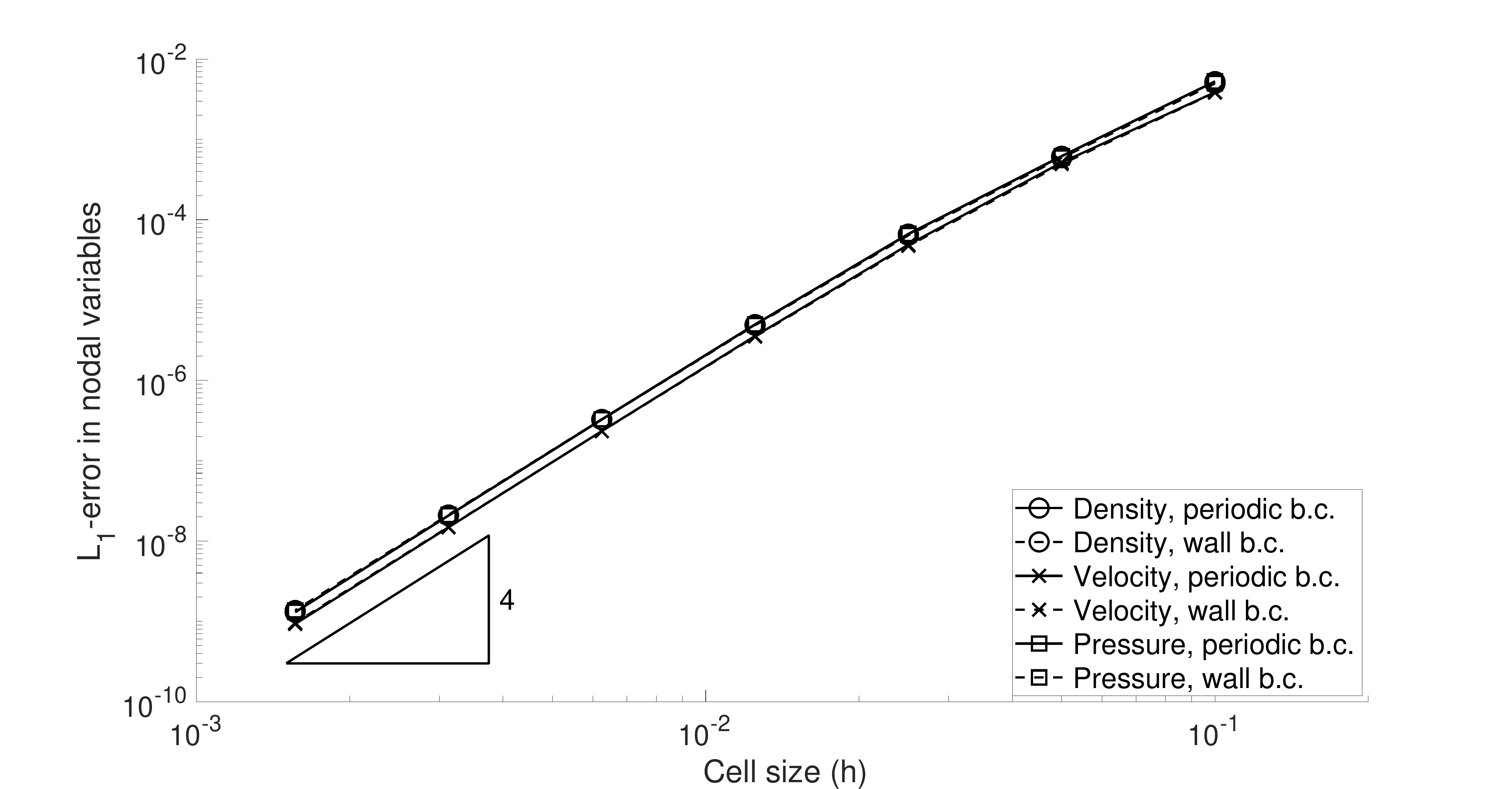}
  \caption{Convergence curves of numerical errors (logarithmic scale) in wave collision problems.
    Solid curves -- periodic boundary conditions (\cref{sec:num_1d_col}); dashed curves -- wall boundary conditions (\cref{sec:num_1d_wall}).
    All computations use the entropy viscosity with $z_0=0.04$.}
  \label{fg:num_1d_col}
\end{figure}
Particularly, in~\cref{fg:num_1d_col} the convergence curves obtained by using the wall boundary conditions are denoted by dashed lines and they're almost on top of the solid ones.
Further comparison can be seen from~\cref{fg:num_1d_col_prs}, where the pressures at $T=0.64$ and $T=1.2$ are plotted and compared to those obtained using periodic boundary condition as well as the reference solution.

\subsubsection{Euler equations: The Sod shock tube problem}
\label{sec:num_1d_sod}
In the last 1D test we assess the performance of the proposed method handling strong discontinuities by solving the benchmark Sod shock tube problem~\cite{GSod:1978a}.
The problem is again governed by the Euler equations with $\gamma=1.4$ on the domain $\Omega = [-2,\,2]$ and initial data:
\begin{equation}\label{eq:num_1d_sod_ic}
  \left\{\begin{array}{lcl}
    \rho(x,0) = 1.0,\quad v(x,0) = 0.0,\quad p(x,0) = 1.0\,, & & x\in[-2,\,0]\;, \\ \vspace{-.1in} \\
    \rho(x,0) = 0.125,\quad v(x,0) = 0.0,\quad p(x,0) = 0.1\,, & & x\in[0,\,2]\;.
  \end{array}\right.
\end{equation}
The problem is solved until $T=0.8$, by which time the waves have not hit the boundaries yet; hence the Dirichlet boundary condition is enforced throughout the computation.
Numerical solutions obtained using a uniform grid of $160$ cells are plotted in~\cref{fg:num_1d_sod}, together with the MUSCL solutions\footnote{The Roe method~\cite{PLRoe:1981a} is chosen to compute the numerical fluxes.} on a $160$-cell grid and a $320$-cell grid as well as the exact solution.
\begin{figure}\centering
  \includegraphics[trim=.6in .0in 1.6in .6in, clip, width=.48\textwidth]{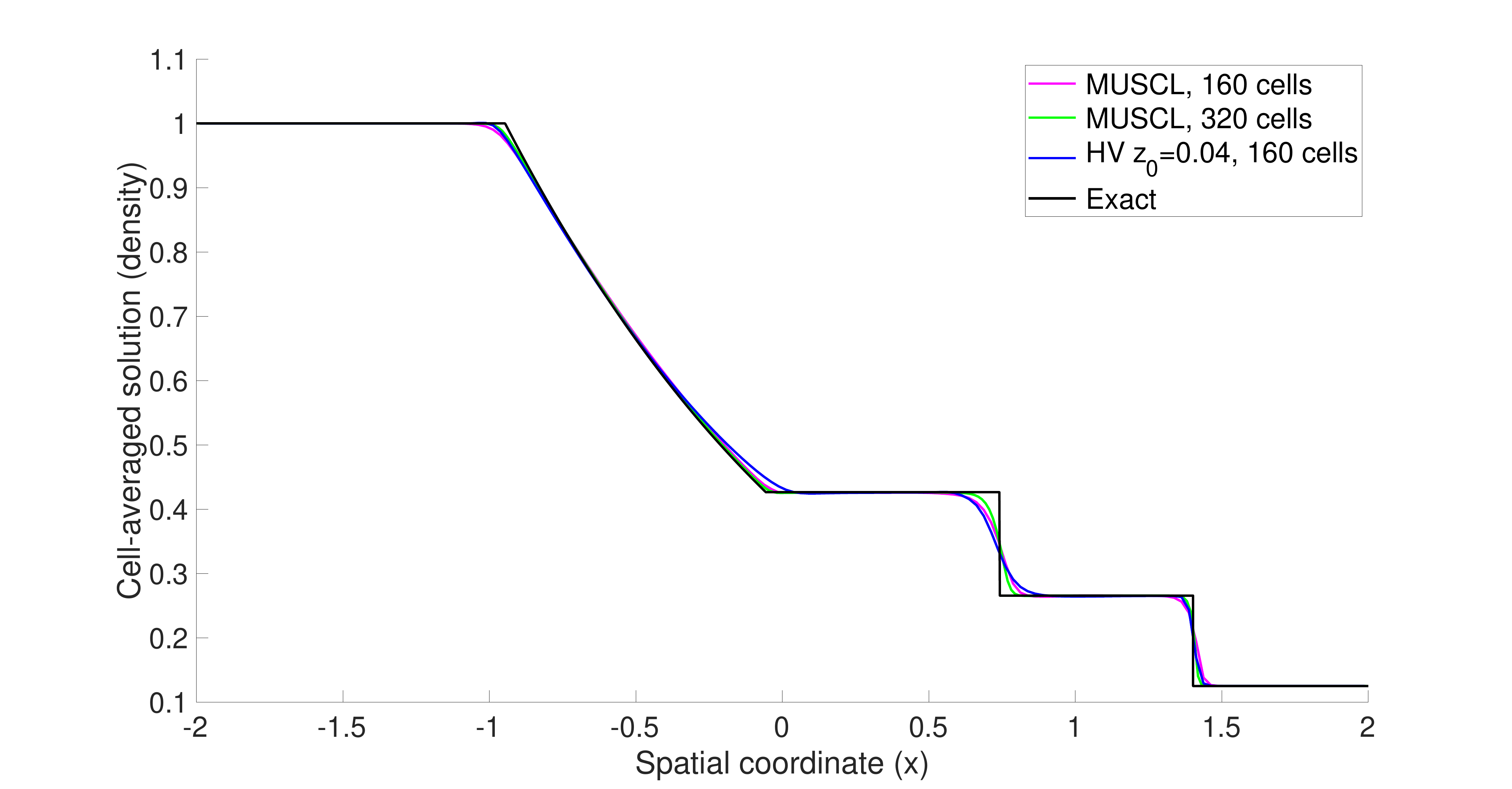} ~~~~
  \includegraphics[trim=.6in .0in 1.6in .6in, clip, width=.48\textwidth]{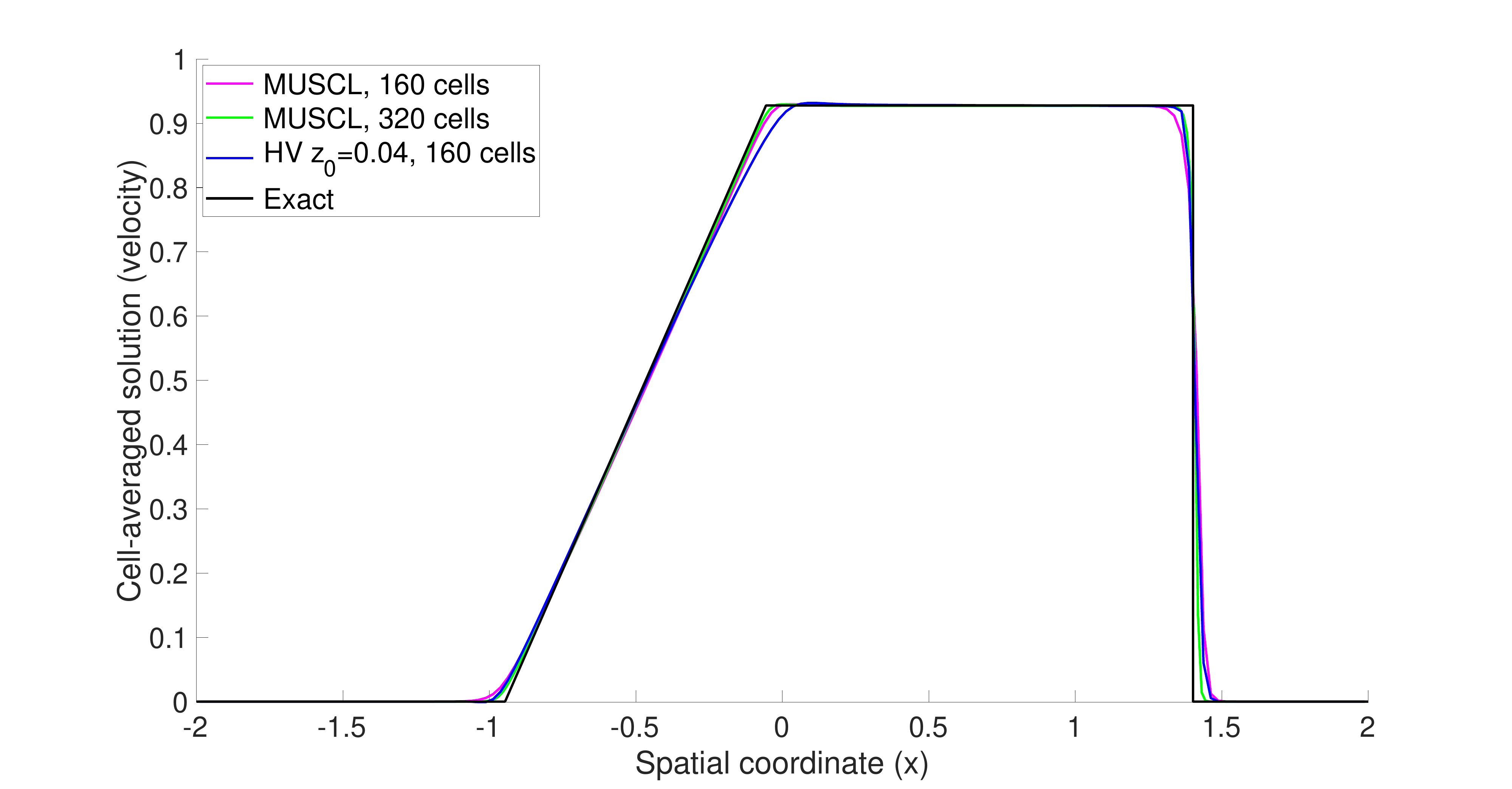} \\
  \includegraphics[trim=.6in .0in 1.6in .6in, clip, width=.48\textwidth]{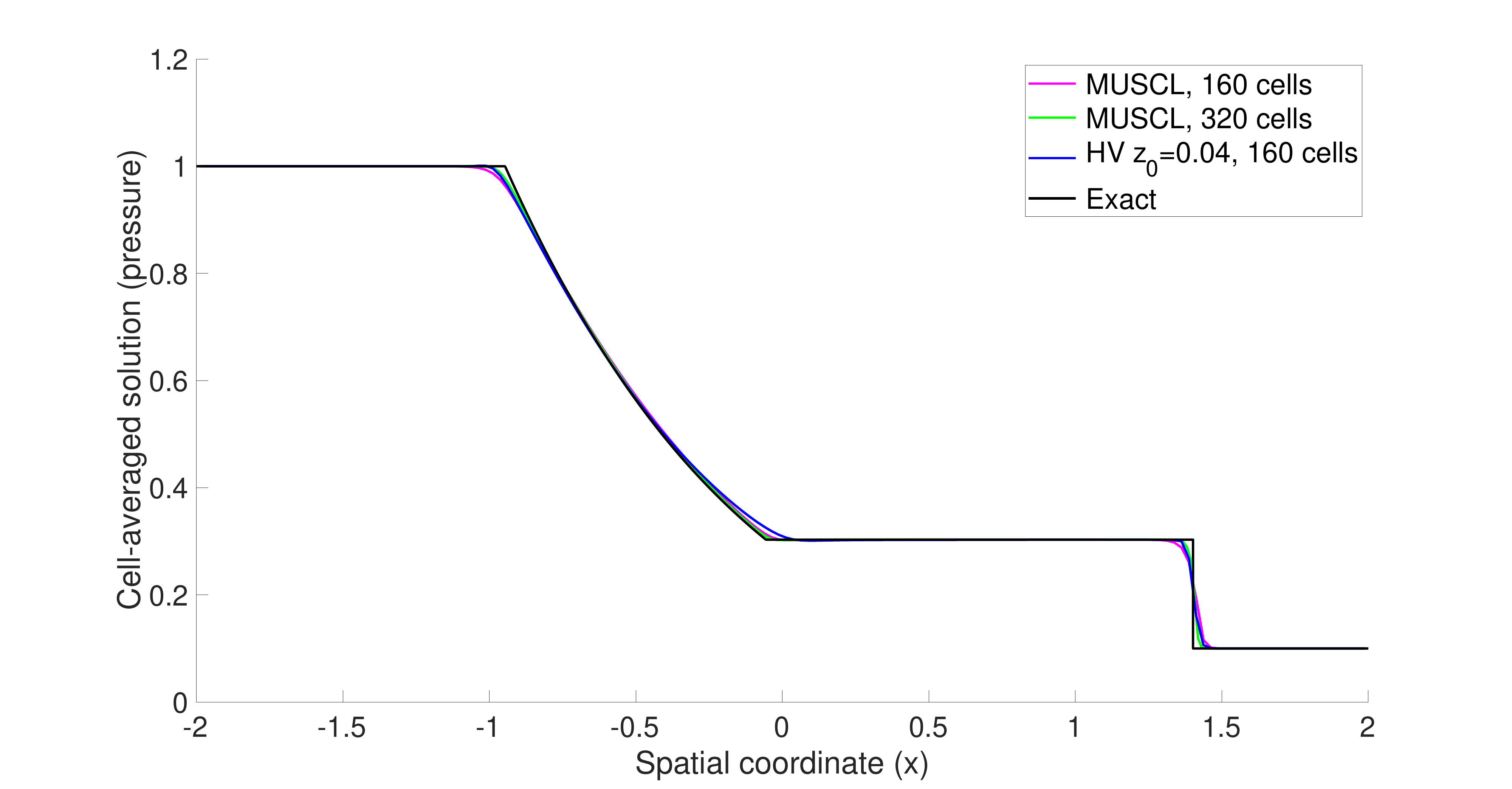} ~~~~
  \includegraphics[trim=.6in .0in 1.6in .6in, clip, width=.48\textwidth]{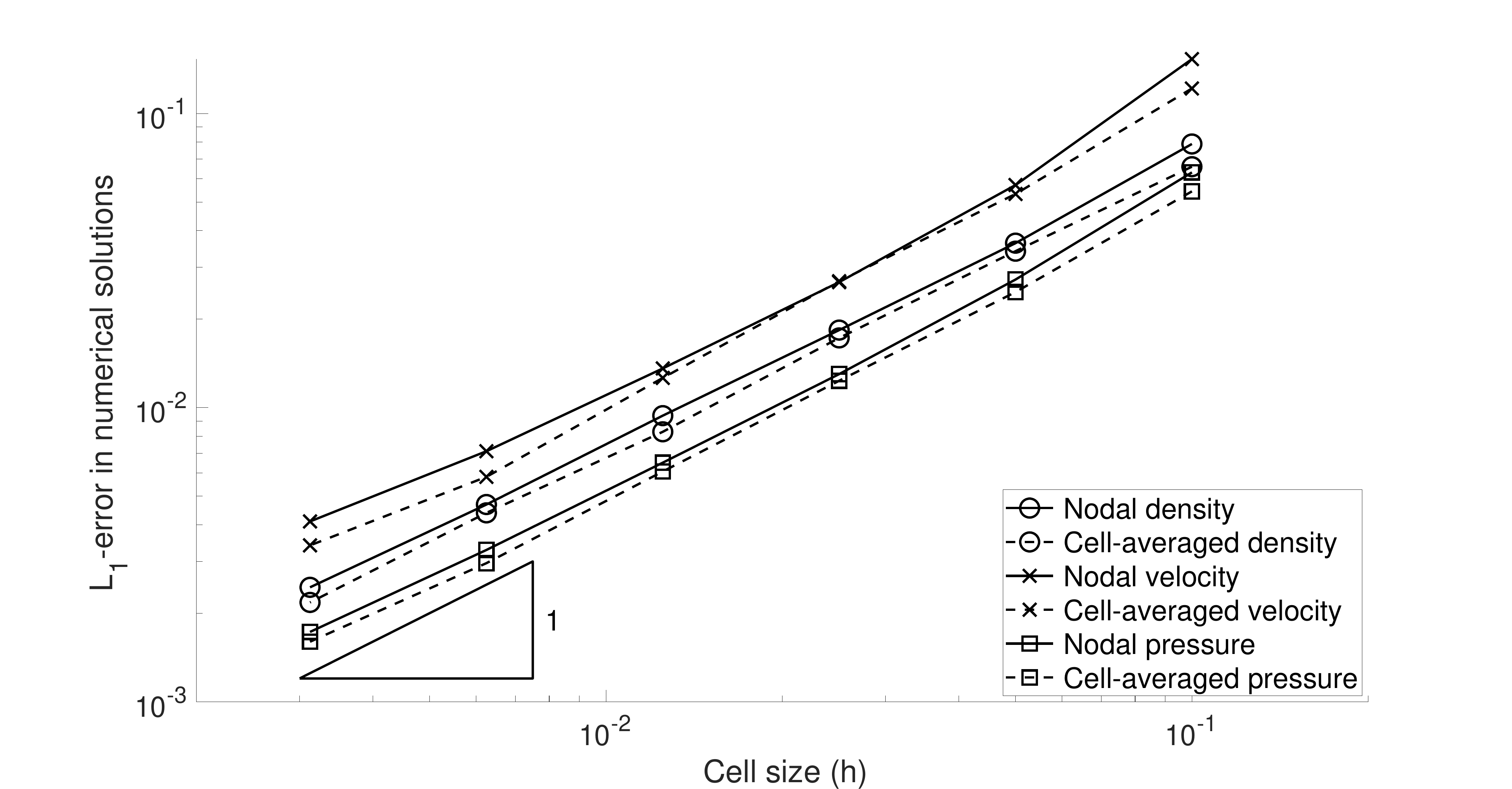} 
  \caption{Numerical solutions in the Sod shock tube test. 
    Density, velocity, and pressure computed from cell-averaged variables are plotted in upper-left panel, upper-right panel, and lower-left panel, respectively; convergence curves are provided in the lower-right panel.}
  \label{fg:num_1d_sod}
\end{figure}
It is observed that the HV solutions show similar resolution as MUSCL with the same number of unknowns at the shock front, whereas the numerical error at a contact discontinuity is largely determined by the mesh resolution.
Finally, using a sequence of 7 uniform grids with number of cells ranging from $40$ to $1280$ we obtain first-order convergence of all variables (see the lower-right panel of~\cref{fg:num_1d_sod}), as expected for discontinuous problems.

Note that we plotted the scaled entropy residual and its processed variants at $T=0.8$ in the $80$-cell and $320$-cell computations in~\cref{fg:rcv_vh_coef}; and it is clear that the location where the baseline von Neumann viscosity is activated agrees well with the shock position.

\subsection{Two-dimensional tests}
\label{sec:num_2d}
In the last section, we consider various benchmark two-dimensional tests.
Particularly, two tests governed by the Euler equations are considered for accuracy assessment, where~\cref{sec:num_2d_vortex} focuses on the interior discretization operator and~\cref{sec:num_2d_tg} concentrates on wall boundary conditions and the handling of source terms.
Solutions with discontinuities are investigated in~\cref{sec:num_2d_kpp} and~\cref{sec:num_2d_sb} for the scalar KPP problem and the 2D Euler equations, respectively.

\subsubsection{The KPP problem}
\label{sec:num_2d_kpp}
The KPP problem~\cite{AKurganov:2007a} is a scalar conservation law given by:
\begin{equation}\label{eq:num_2d_kpp_eqn}
  w_t + (\sin(w))_x + (\cos(w))_y = 0\;,\quad
  (x,y)\in\Omega = [-2,\,2]\times[-2.5,\,1.5]\;,
\end{equation}
with initial condition:
\begin{equation}\label{eq:num_2d_kpp_ic}
  w(x,y,0) = \left\{\begin{array}{lcl}
    \frac{7\pi}{2} & & \textrm{if }\ \sqrt{x^2+y^2}\le1\;, \\ \vspace*{-.15in} \\
    \frac{\pi}{4} & & \textrm{otherwise}\;.
  \end{array}\right.
\end{equation}
Dirichlet condition $w=\pi/4$ is enforced at $\partial\Omega$ for the duration of simulation until $T=1.0$.
To compute the artificial viscosity, we select the usual entropy for scalar equations $s(w)=S(w)=w^2/2$ and use $\bs{v}(w)=[\cos(w),\,-\sin(w)]^T$ and $\overline{\rho}\equiv1$ in the computation of cellular residual and the dimensionless indicator $Z$. 

The KPP equation has a non-convex flux and it is challenging due to a composite wave structure.
In~\cref{fg:num_2d_kpp}, we plot the cell-averaged solution obtained by solving the problem using a uniform $240\times240$ grid alongside the nonentropic solution obtained by the classical MUSCL scheme with superbee limiter~\cite{PLRoe:1986b}; in addition, the activation factor ($\mathcal{M}\circ\mathcal{S}\circ\mathcal{N}(Z_{\phf{i},\phf{j}})$) of the HV computation is also provided and it shows the artificial viscosity is mainly activated in the region of the spiral discontinuity.
\begin{figure}\centering
  \begin{subfigure}[b]{.32\textwidth}\centering
    \includegraphics[trim=5.8in .6in 5.8in .6in, clip, width=\textwidth]{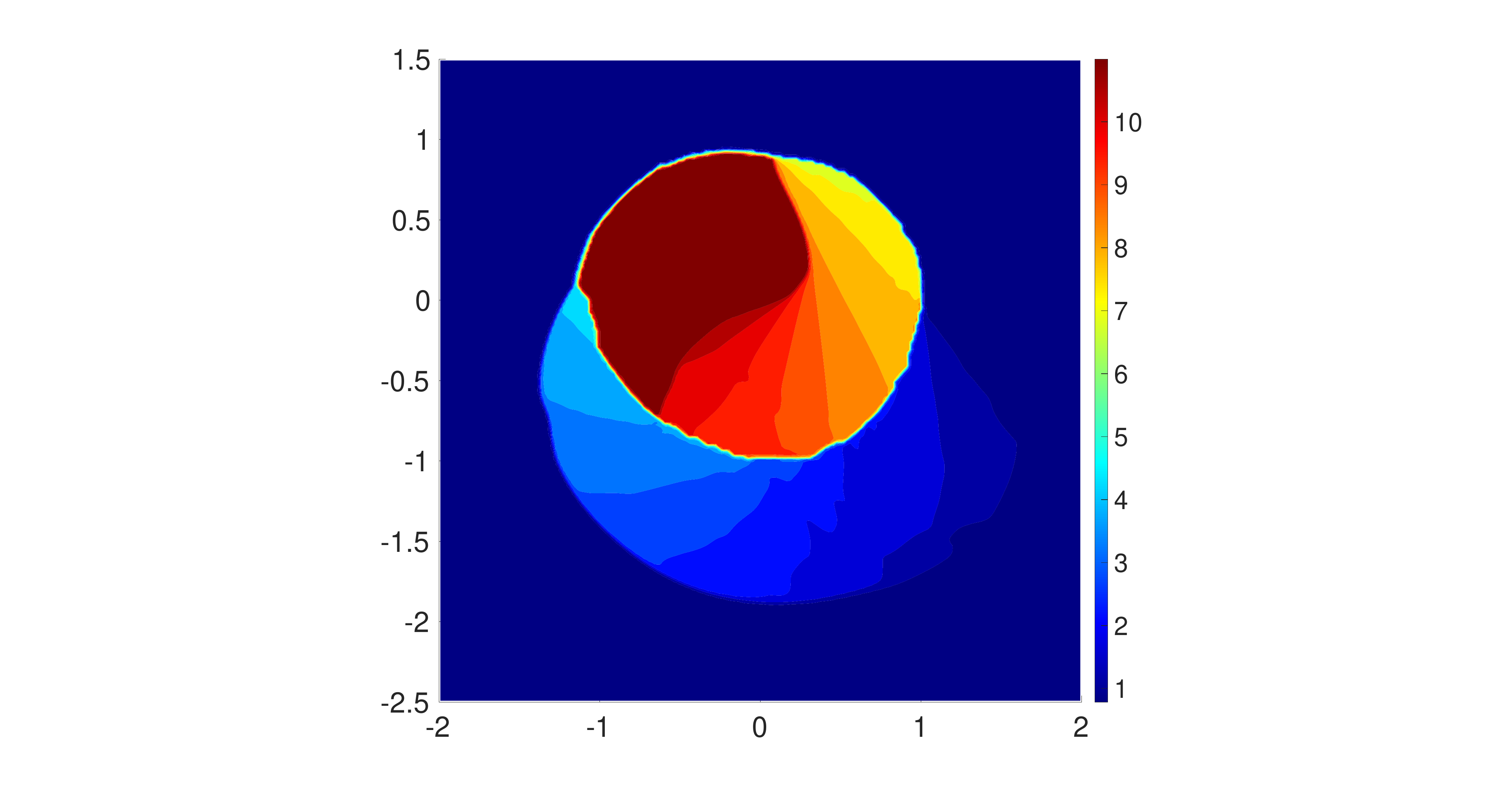}
    \caption{MUSCL solution.}
    \label{fg:num_2d_kpp_muscl}
  \end{subfigure} ~~
  \begin{subfigure}[b]{.64\textwidth}\centering
    \includegraphics[trim=5.8in .6in 5.8in .6in, clip, width=.5\textwidth]{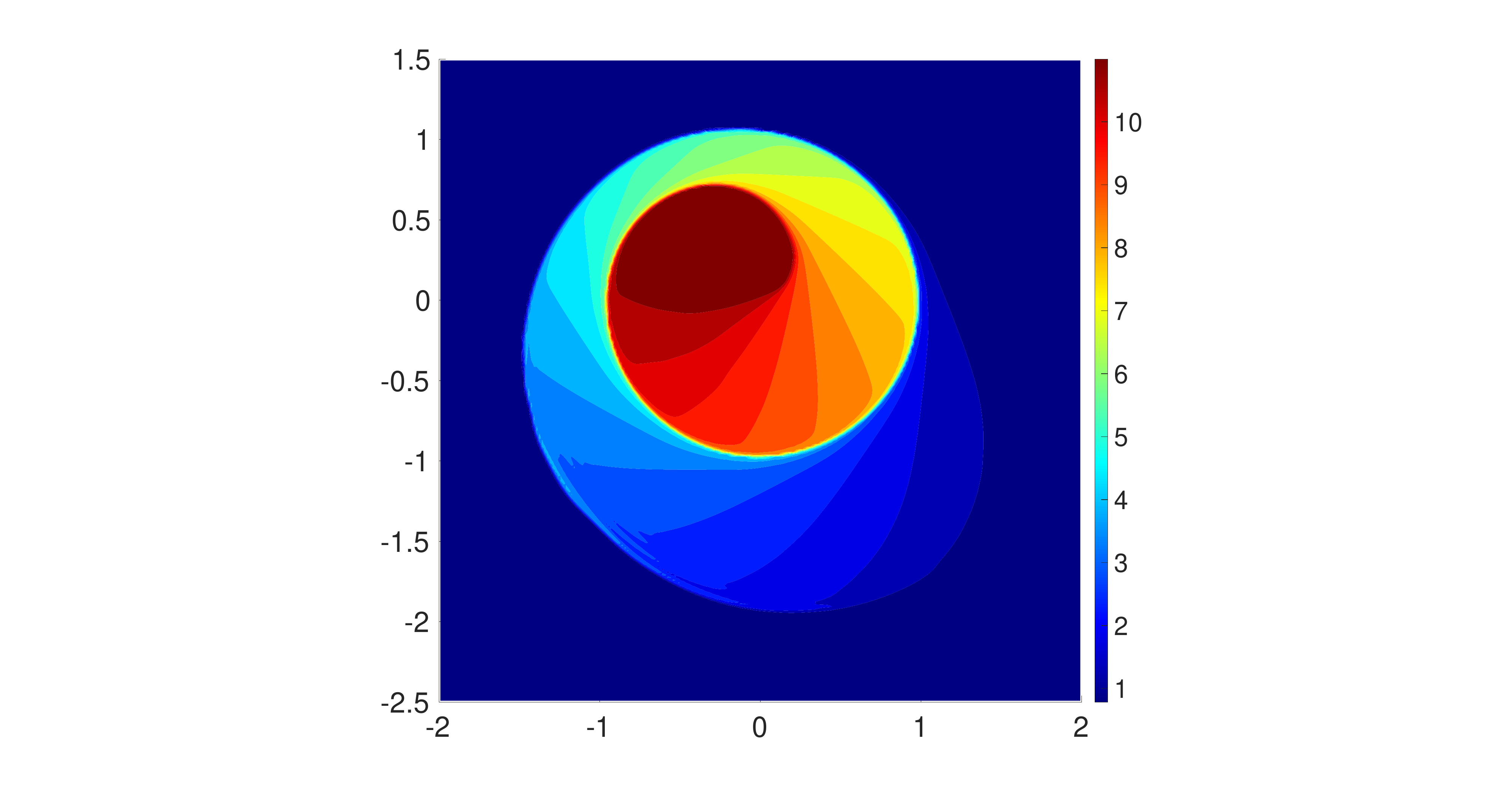}~
    \includegraphics[trim=5.6in .6in 5.8in .6in, clip, width=.5\textwidth]{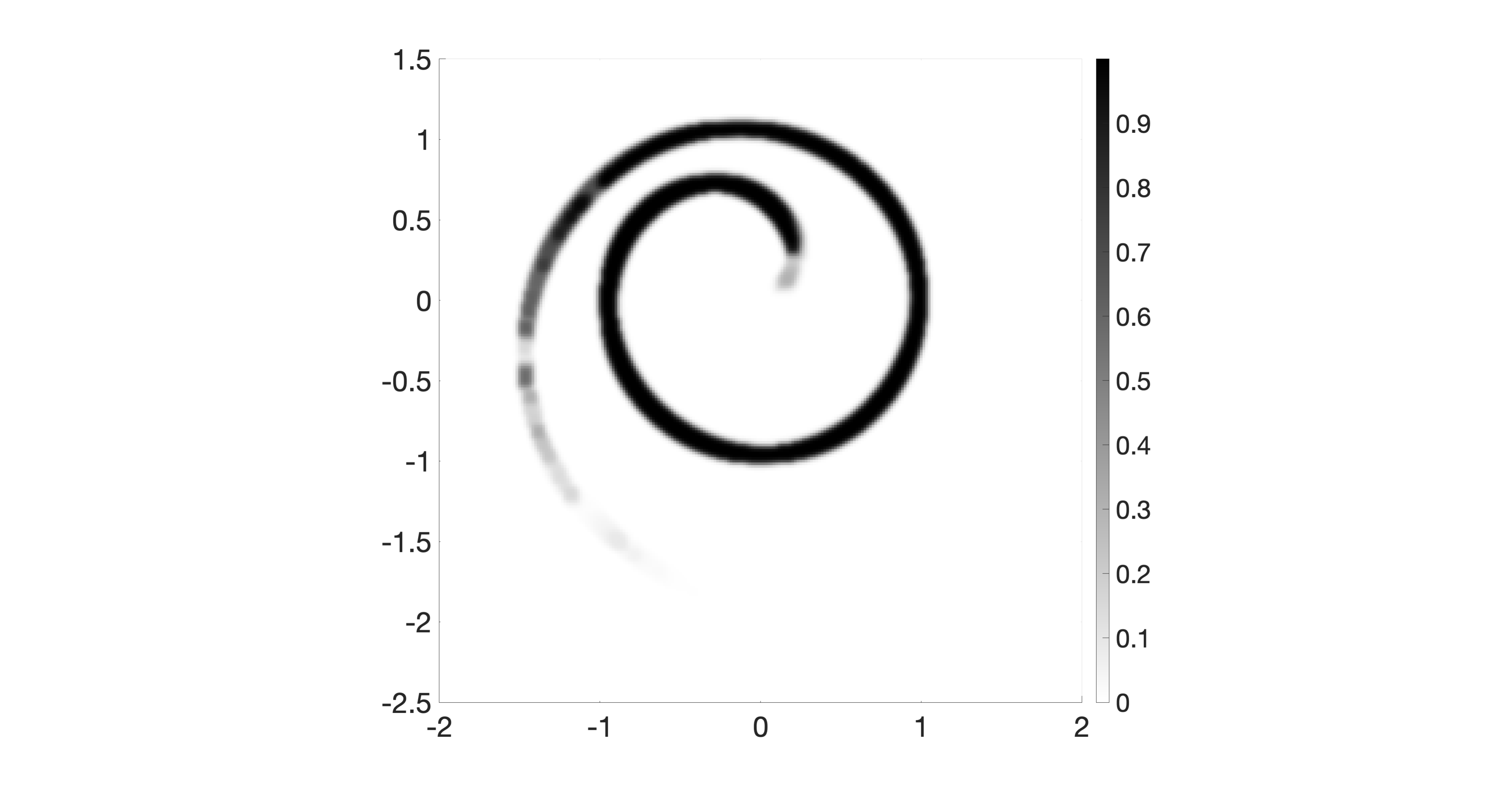}
    \caption{HV solution with $z_0=0.04$.}
    \label{fg:num_2d_kpp_z0d04}
  \end{subfigure} \\
  \begin{subfigure}[b]{.32\textwidth}\centering
    $\quad$
  \end{subfigure}
  \begin{subfigure}[b]{.64\textwidth}\centering
    \includegraphics[trim=5.8in .6in 5.8in .6in, clip, width=.5\textwidth]{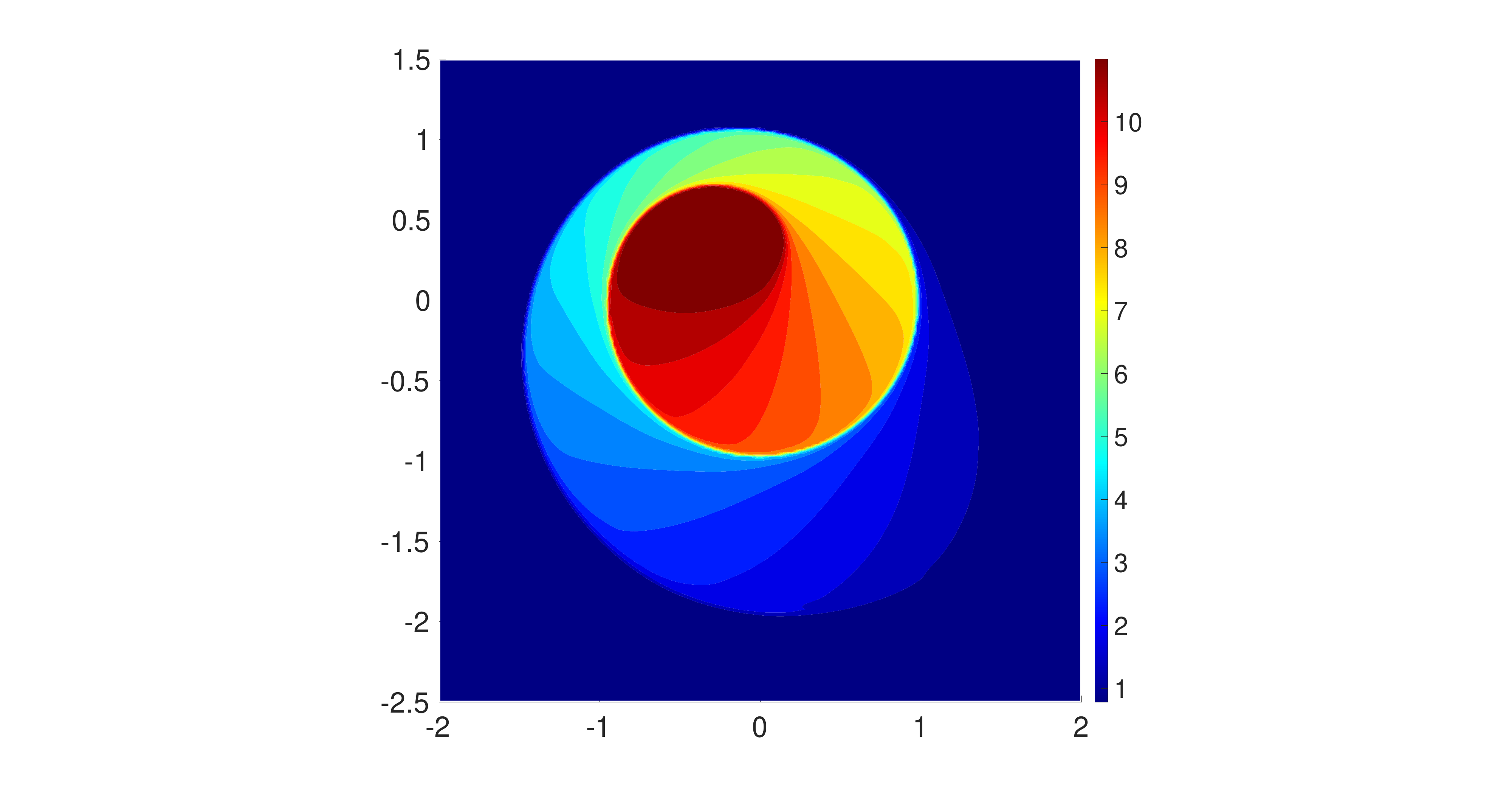}~
    \includegraphics[trim=5.6in .6in 5.8in .6in, clip, width=.5\textwidth]{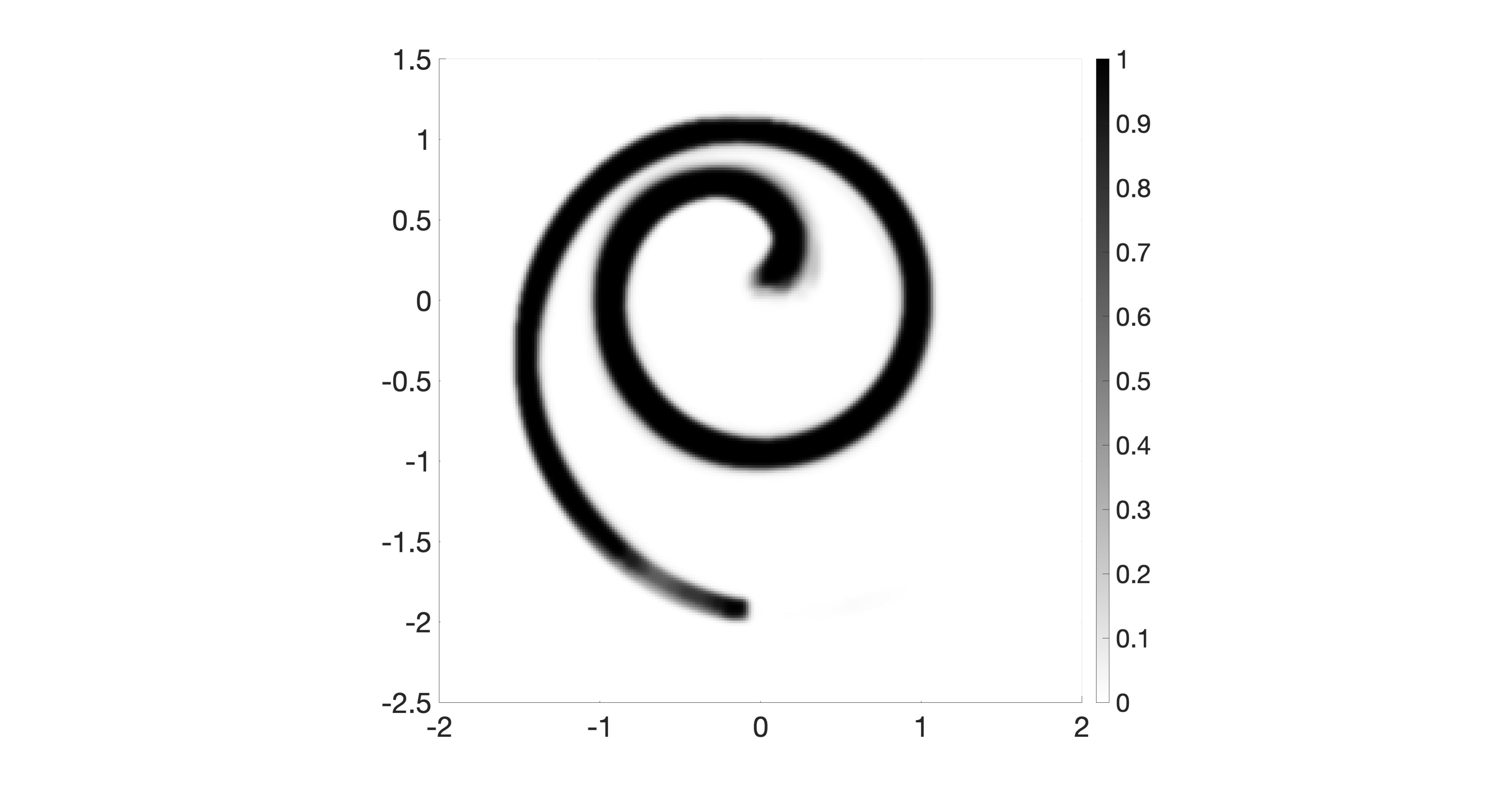}
    \caption{HV solution with $z_0=0.002$.}
    \label{fg:num_2d_kpp_z0d002}
  \end{subfigure}
  \caption{The nonentropic solution of the KPP problem by the MUSCL scheme with superbee limiter (upper row, left panel), the admissible solution and the activation factor of the entropy viscosity of the HV computation with $z_0=0.04$ (upper row middle and right panels), and those of the HV computation with $z_0=0.002$ (lower row).}
  \label{fg:num_2d_kpp}
\end{figure}
Note that although the HV solution computed with $z_0=0.04$ (upper row) is consistent with the entropy one, it still exhibits oscillation along the exterior spiral as the discontinuity gets weaker towards the tail.
We expect to have much better suppression of spurious oscillations if a lower threshold is used, as it is confirmed with the HV solution computed with $z_0=0.002$ (lower row).
The same conclusion can be obtained by comparing the activation factor of the artificial viscosity between these two HV computations.

\subsubsection{Euler equations: Isentropic vortex}
\label{sec:num_2d_vortex}
To assess the accuracy of the method, we consider the isentropic vortex problem~\cite{CWShu:1998a} that is governed by the 2D Euler equation on the square domain $\Omega=[-5,\,5]^2$ with periodic boundary conditions and the initial condition:
\begin{equation}\label{eq:num_2d_vortex_ic}
  \left\{\begin{array}{l}
    \rho(x,y,0) = \left(1-\frac{\epsilon^2}{8}\frac{\gamma-1}{\gamma\pi^2}\exp(1-x^2-y^2)\right)^{1/(\gamma-1)}\;, \\
    v_1(x,y,0) = 1-\frac{\epsilon y}{2\pi}\exp\left(\frac{1}{2}(1-x^2-y^2)\right)\;, \\
    v_2(x,y,0) = 1+\frac{\epsilon x}{2\pi}\exp\left(\frac{1}{2}(1-x^2-y^2)\right)\;, \\
    p(x,y,0) = \left(1-\frac{\epsilon^2}{8}\frac{\gamma-1}{\gamma\pi^2}\exp(1-x^2-y^2)\right)^{\gamma/(\gamma-1)}\;,
  \end{array}\right.
\end{equation}
where $\epsilon=5$ is a parameter that controls the strength of the vortex and the specific heat capacity ratio is $\gamma=1.4$. 
The problem describes a vortex whose center travels with the velocity $(1,\,1)$ across the diagonal of the domain; and the initial condition serves as the reference solution at $T=10.0$.
In~\cref{fg:num_2d_vortex}, we plot the $L_1$-norms of numerical errors in logarithmic scale on a sequence of 6 uniform grids with number of cells ranging from $10\times10$ to $320\times320$ with artificial viscosity activated ($z_0=0.04$).
\begin{figure}\centering
  \includegraphics[trim=.6in .0in 1.6in .6in, clip, width=.48\textwidth]{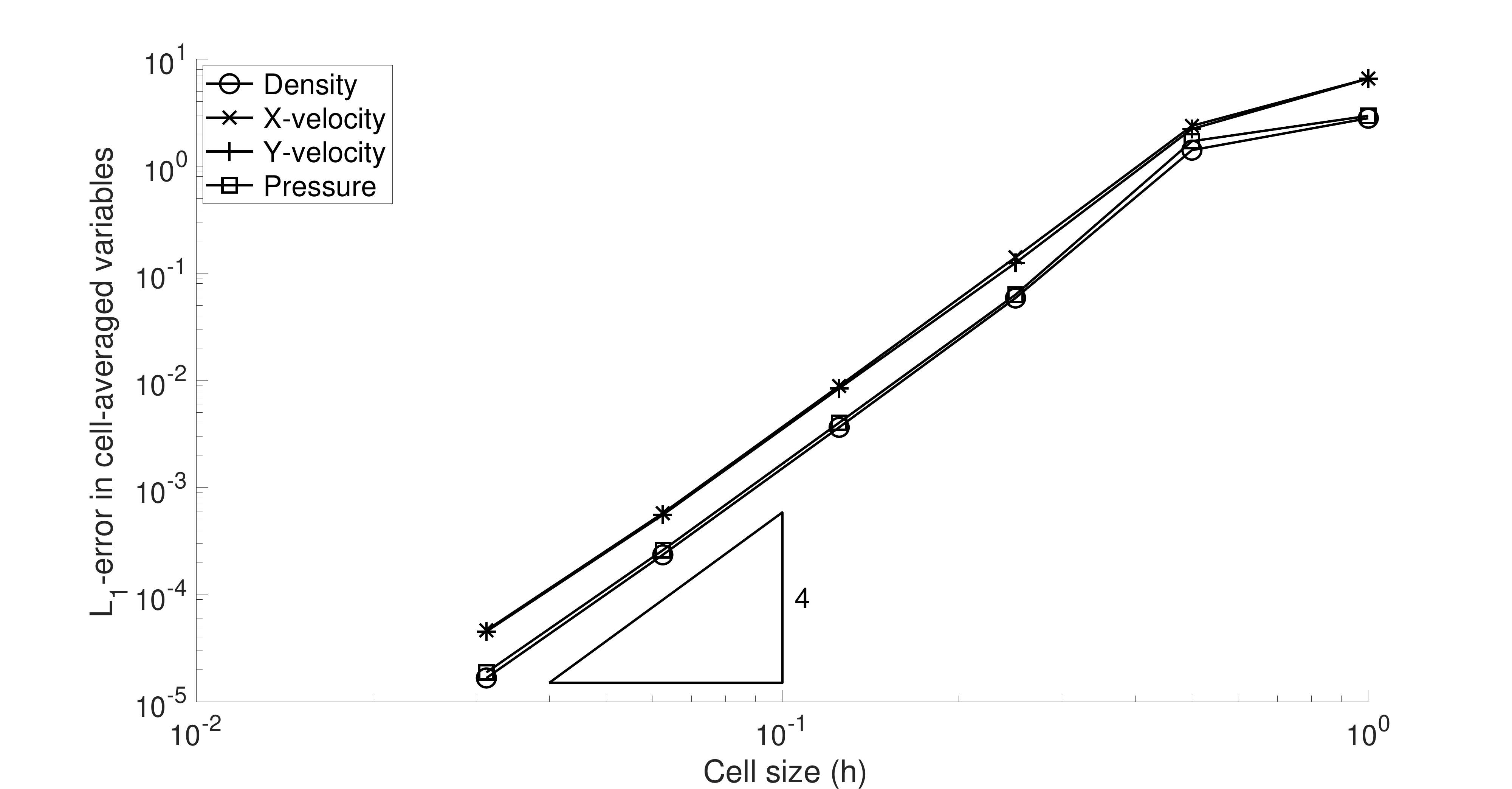} ~~~~
  \includegraphics[trim=.6in .0in 1.6in .6in, clip, width=.48\textwidth]{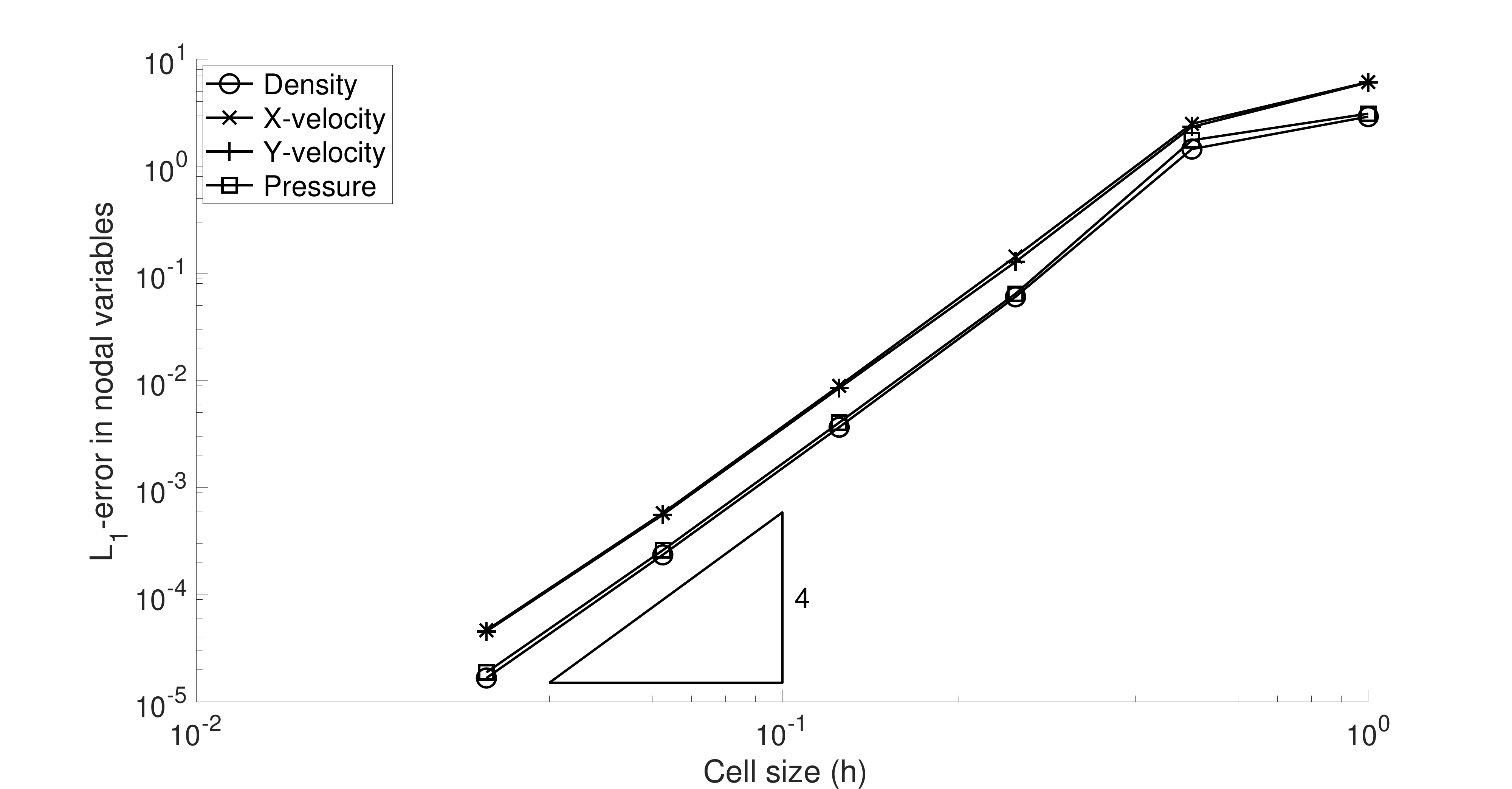}
  \caption{Convergence curves of numerical errors (logarithmic scale) computed in solving the isentropic vortex problem (\cref{sec:num_2d_vortex}).}
  \label{fg:num_2d_vortex}
\end{figure}
We clearly observe the optimal order of convergence once the grid resolution is no coarser than $20\times20$.

\subsubsection{Euler equations: The Taylor-Green vortex}
\label{sec:num_2d_tg}
Next we consider the two-dimensional Taylor-Green vortex problem~\cite{JLGuermond:2016a} that is again governed by the Euler equations with a heat source and specific heat capacity ratio $\gamma=5/3$ on the unit square $\Omega = [0,\,1]^2$:
\begin{align}
  \label{eq:num_2d_tg_eqn}
  \bs{W}_t &+ \nabla\cdot\bs{F}(\bs{W}) + \bs{B} = \bs{0}\;,\quad
  \bs{B} = \left[\begin{array}{c}0 \\ \bs{0} \\ -\rho r\end{array}\right]\;, \\
  \notag
  &r(\bs{x},t) = r(x,y,t) = \frac{3\pi}{8}\left[\cos(3\pi x)\cos(\pi y)-\cos(\pi x)\cos(3\pi y)\right]\;.
\end{align}
With wall boundary conditions on $\partial\Omega$, it admits the smooth and time-independent solution:
\begin{equation}\label{eq:num_2d_tg_sol}
  \rho(\bs{x},t) = 1\,,\ 
  \bs{v}(\bs{x},t) = \left[\begin{array}{c}\sin(\pi x)\cos(\pi y) \\ -\cos(\pi x)\sin(\pi y))\end{array}\right]\,,\ 
  p(\bs{x},t) = \frac{1}{4}\left[\cos(2\pi x)+\cos(2\pi y)\right]+1\;.
\end{equation}
To carry out the computation, the only missing piece is the calculation of cellular entropy residual in appearance of a source term, for which purpose all we need to do is adding to the right hand side of~\cref{eq:ext_ev_res} the term $\pp{s}{\bs{W}}(\overline{\bs{W}}_{\phf{i},\phf{j}})\cdot\bs{B}(\overline{\bs{W}}_{\phf{i},\phf{j}})$.

\begin{figure}\centering
  \includegraphics[trim=.6in .0in 1.6in .6in, clip, width=.48\textwidth]{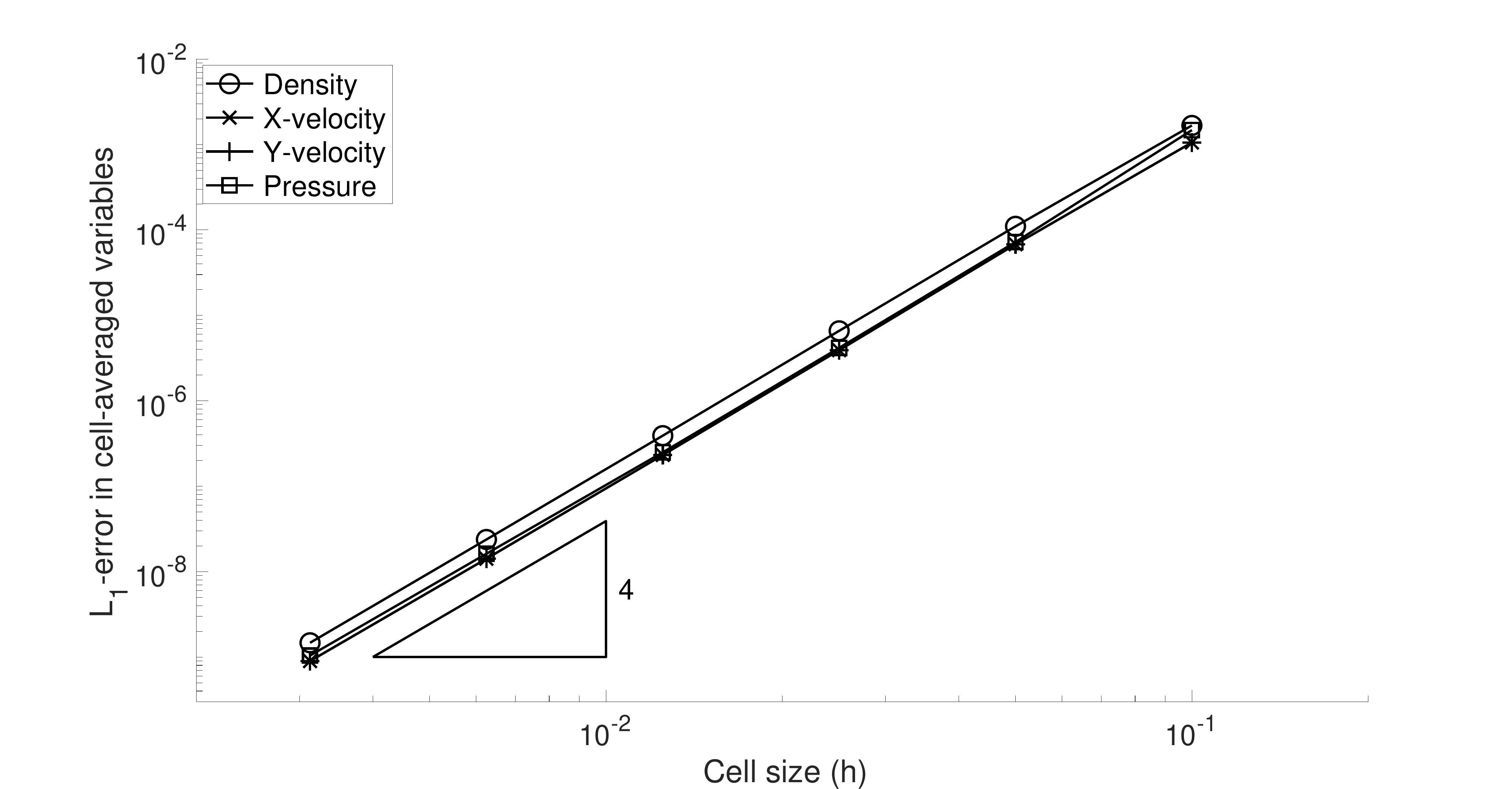} ~~~~
  \includegraphics[trim=.6in .0in 1.6in .6in, clip, width=.48\textwidth]{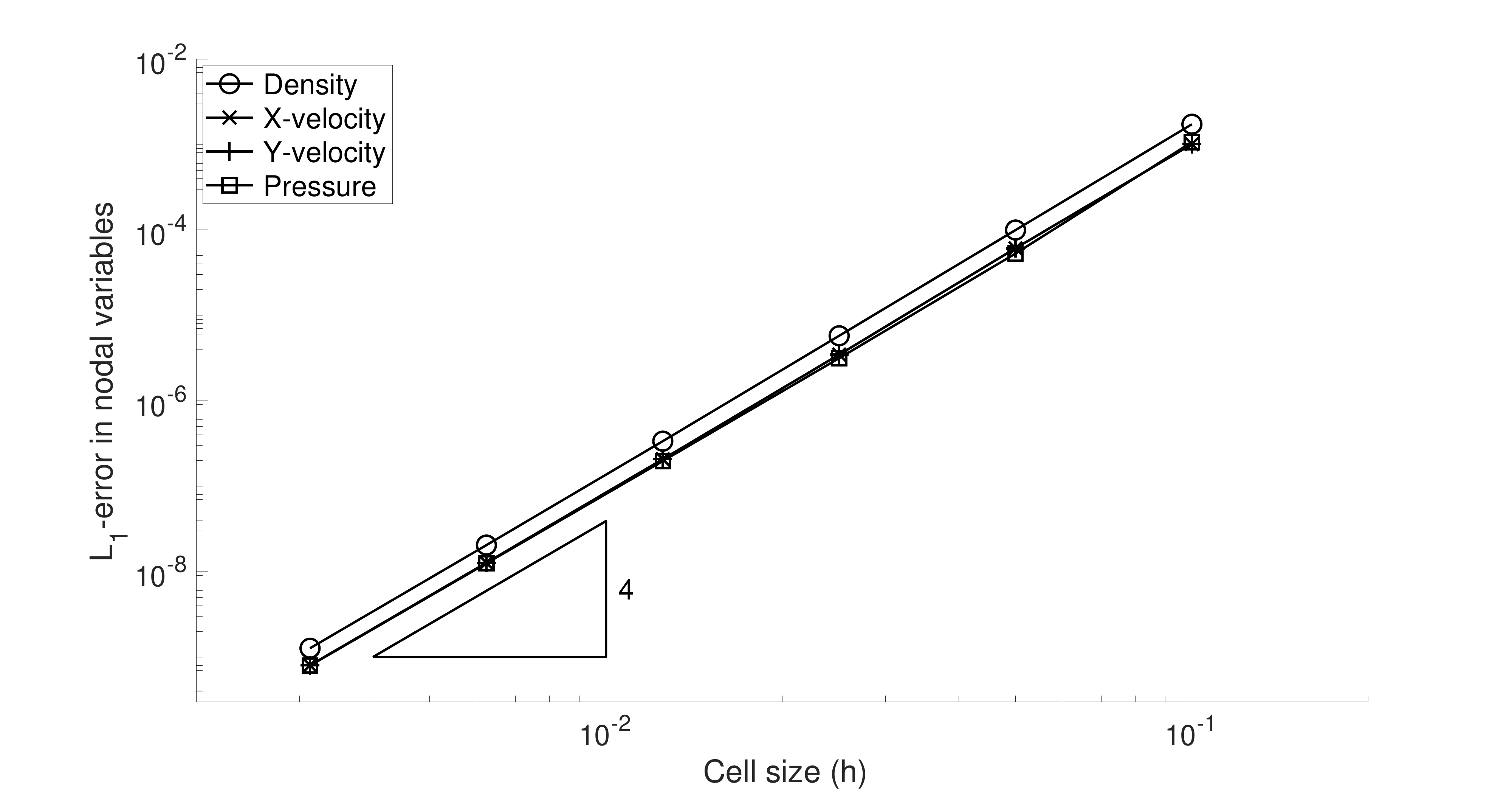}
  \caption{Convergence curves of numerical errors (logarithmic scale) computed in the Taylor-Green vortex problem (\cref{sec:num_2d_tg}).}
  \label{fg:num_2d_tg}
\end{figure}
We assess the numerical performance by solving the problem until $T=0.5$ using a sequence of 6 uniform grids with number of cells ranging from $10\times10$ to $320\times320$ with $z_0=0.04$ in the artificial viscosity computation.
The $L_1$-norms of the numerical errors are plotted in logarithmic scale in~\cref{fg:num_2d_tg}, and we observe the optimal fourth-order convergence in all variables.

\subsubsection{Euler equations: Shock-bubble interaction}
\label{sec:num_2d_sb}
Lastly, we consider a shock-bubble interaction problem motivated by~\cite{MCada:2009a}.
It considers the Euler equations with $\gamma=1.4$ on the rectangular domain $[-0.1,\,1.6]\times[-0.5,\,0.5]$; initially, the problem consists of a vertical shock at $x=0.0$ and a lighter bubble with radius $0.2$ centered at $(0.3,\,0.0)$.
The initial conditions are specified in~\cref{fg:num_2d_sb_setup} along with the boundary conditions at the four edges.
\begin{figure}\centering
  \begin{tikzpicture}[line width=1.6] % x 5 of original coordinates
    \draw [line width=2.4] (-0.5,-2.5) rectangle (8.0,2.5);
    \draw [line width=1.2] (0.0,-2.5) -- (0.0, 2.5);
    \draw [line width=1.2] (1.5,0.0) circle (1.0);
    \draw (0.0,2.5) node [above] {$0.0$};
    \draw (-0.5,-2.5) node [below] {$-0.1$};
    \draw (8.0,-2.5) node [below] {$1.6$};
    \draw (-0.5,-2.5) node [left] {$-0.5$};
    \draw (-0.5, 2.5) node [left] {$0.5$};
    \draw [densely dashed, line width=0.8] (1.5,-1.0) -- (1.5,-2.5) node [below] {$0.3$};
    \draw [densely dashed, line width=0.8] (2.5, 0.0) -- (2.5,-2.5) node [below] {$0.5$};
    \draw (-0.25,0.0) node {\rotatebox{90}{\small$\rho=3.81,\ \bs{v}=(2.58,0.0),\ p=10.0$}};
    \draw (1.5,0.0) node {\small$\begin{array}{l}\rho=0.5\\\bs{v}=(0,0)\\p=1.0\end{array}$};
    \draw (5.25,0.0) node {\small$\begin{array}{l}\rho=1.0\\\bs{v}=(0,0)\\p=1.0\end{array}$};
    \draw (8.25,0.0) node {\rotatebox{90}{\small out-flow}};
    \draw (-0.9,0.0) node {\rotatebox{90}{\small in-flow}};
    \draw (3.75,-2.5) node [below] {\small wall};
    \draw (3.75, 2.5) node [above] {\small wall};
  \end{tikzpicture}
  \caption{Initial setup and boundary conditions for the shock-bubble interaction problem.}
  \label{fg:num_2d_sb_setup}
\end{figure}
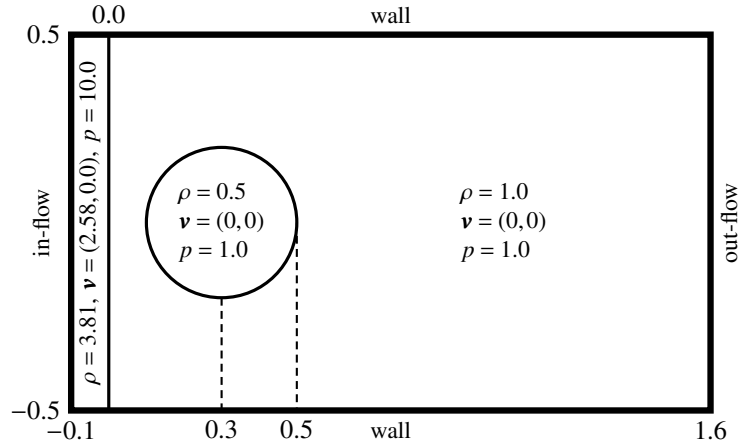
Due to the symmetry of the problem, we only consider the upper half and thus the computational domain is $\Omega = [-0.1,\,1.6]\times[0.0,\,0.5]$.

\begin{figure}\centering
  \begin{subfigure}[b]{\textwidth}\centering
    \includegraphics[trim=1.5in 3in 1.5in 3in, clip, width=.48\textwidth]{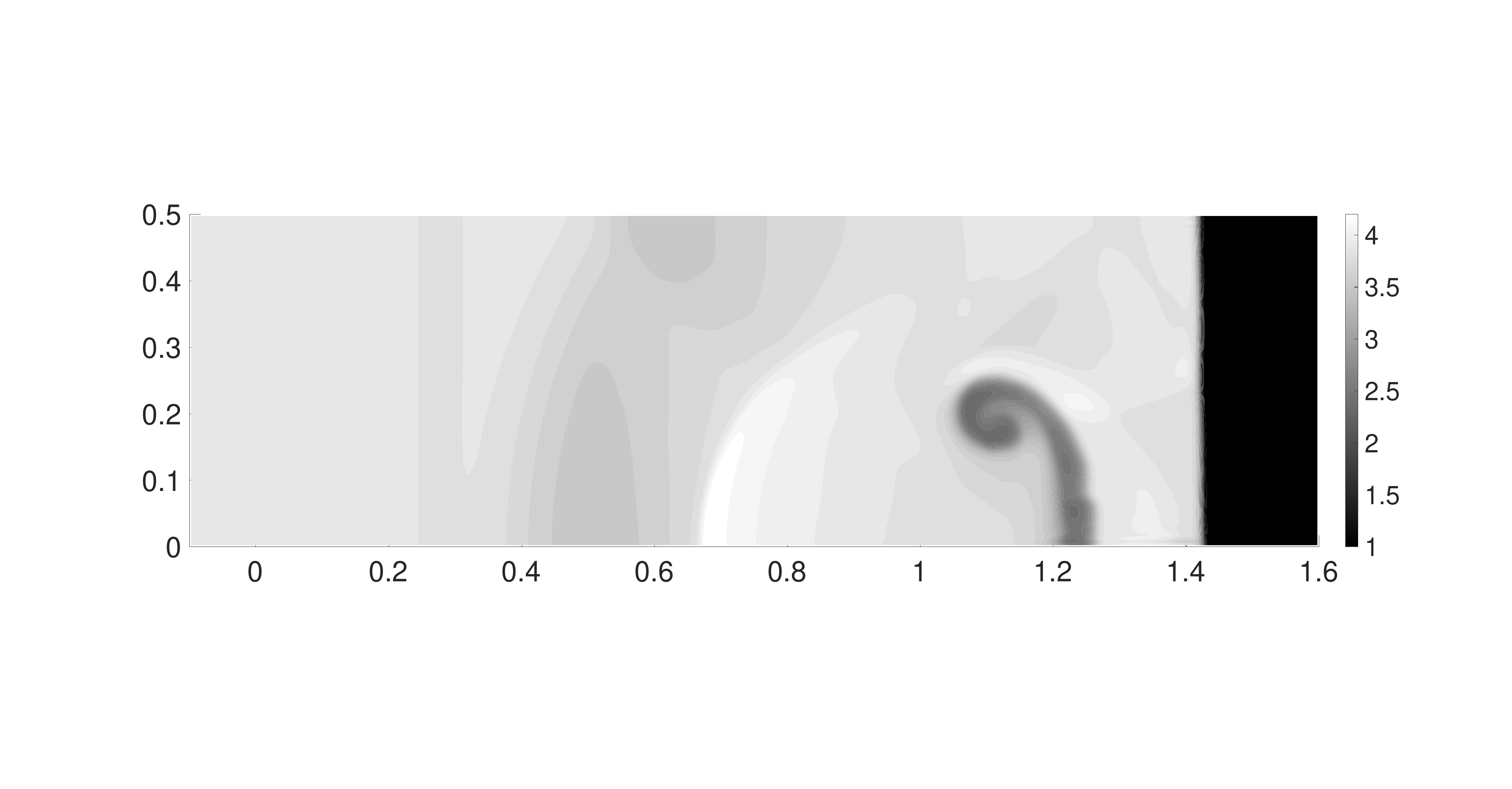}~
    \includegraphics[trim=1.5in 3in 1.5in 3in, clip, width=.48\textwidth]{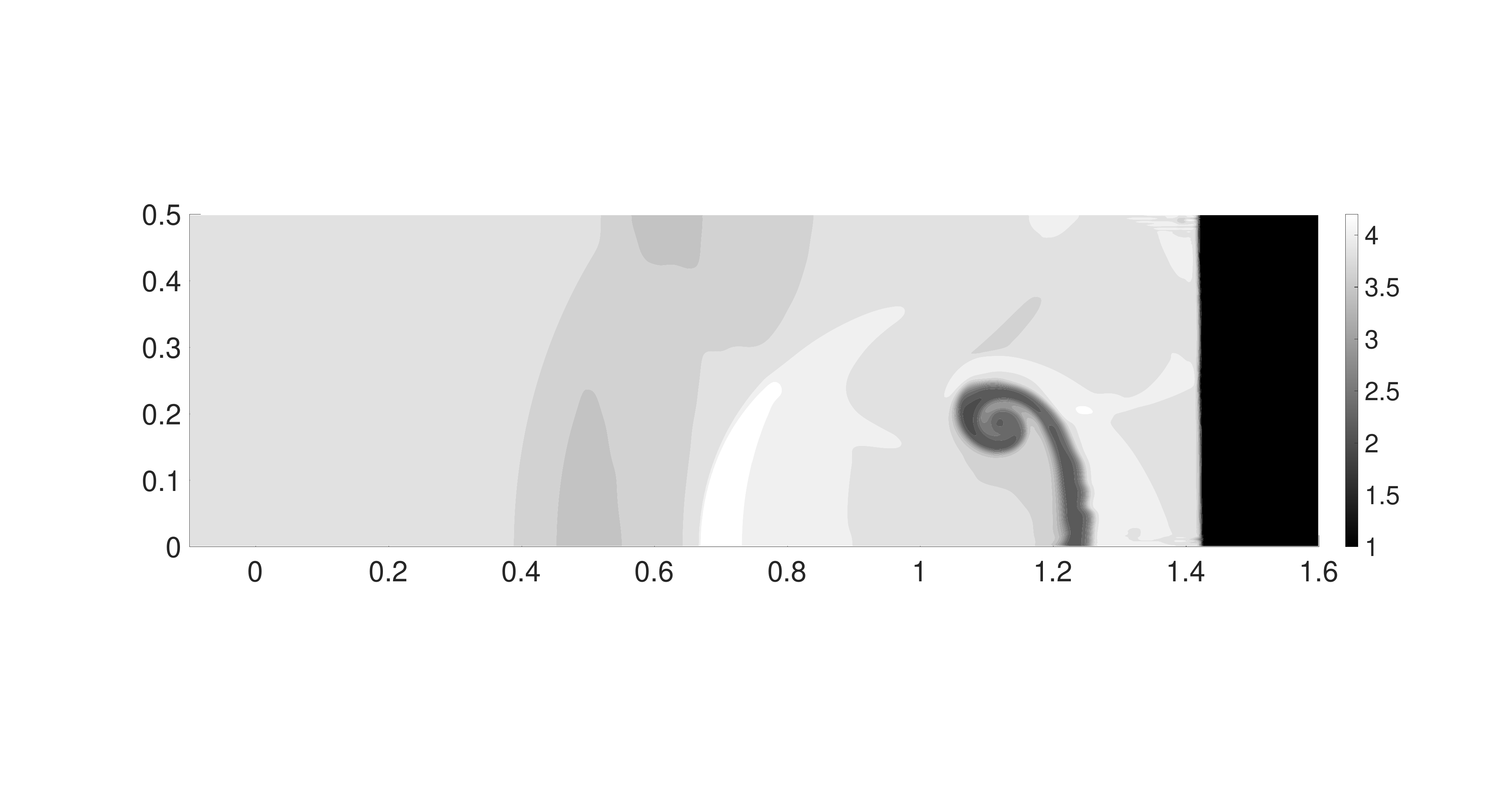}
    \caption{Density computed by MUSCL using: (left) $340\times100$ grid, (right) $680\times200$ grid.}
    \label{fg:num_2d_sb_fvm}
  \end{subfigure}
  \begin{subfigure}[b]{\textwidth}\centering
    \includegraphics[trim=1.5in 3in 1.5in 3in, clip, width=.48\textwidth]{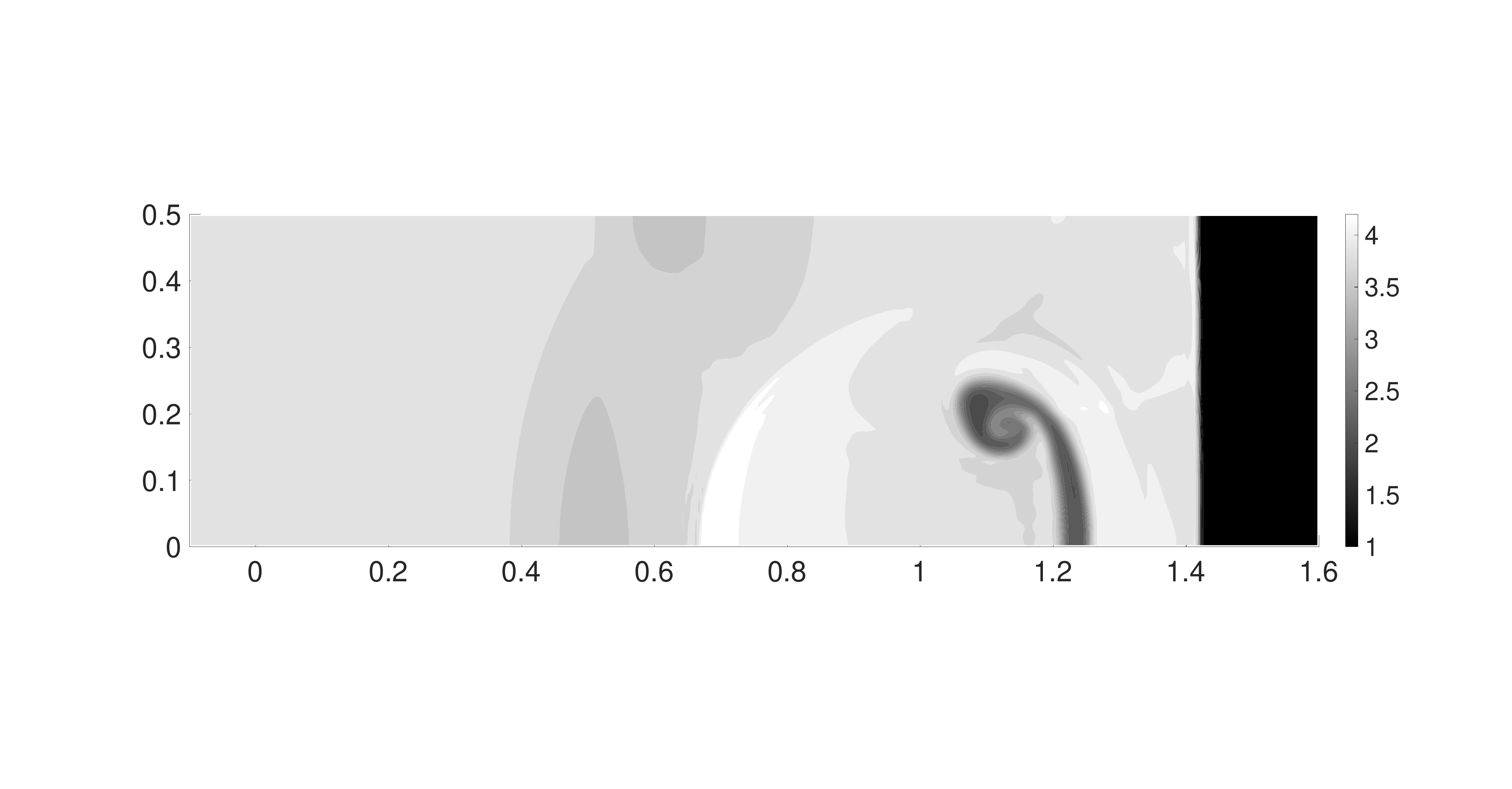}~
    \includegraphics[trim=1.5in 3in 1.5in 3in, clip, width=.48\textwidth]{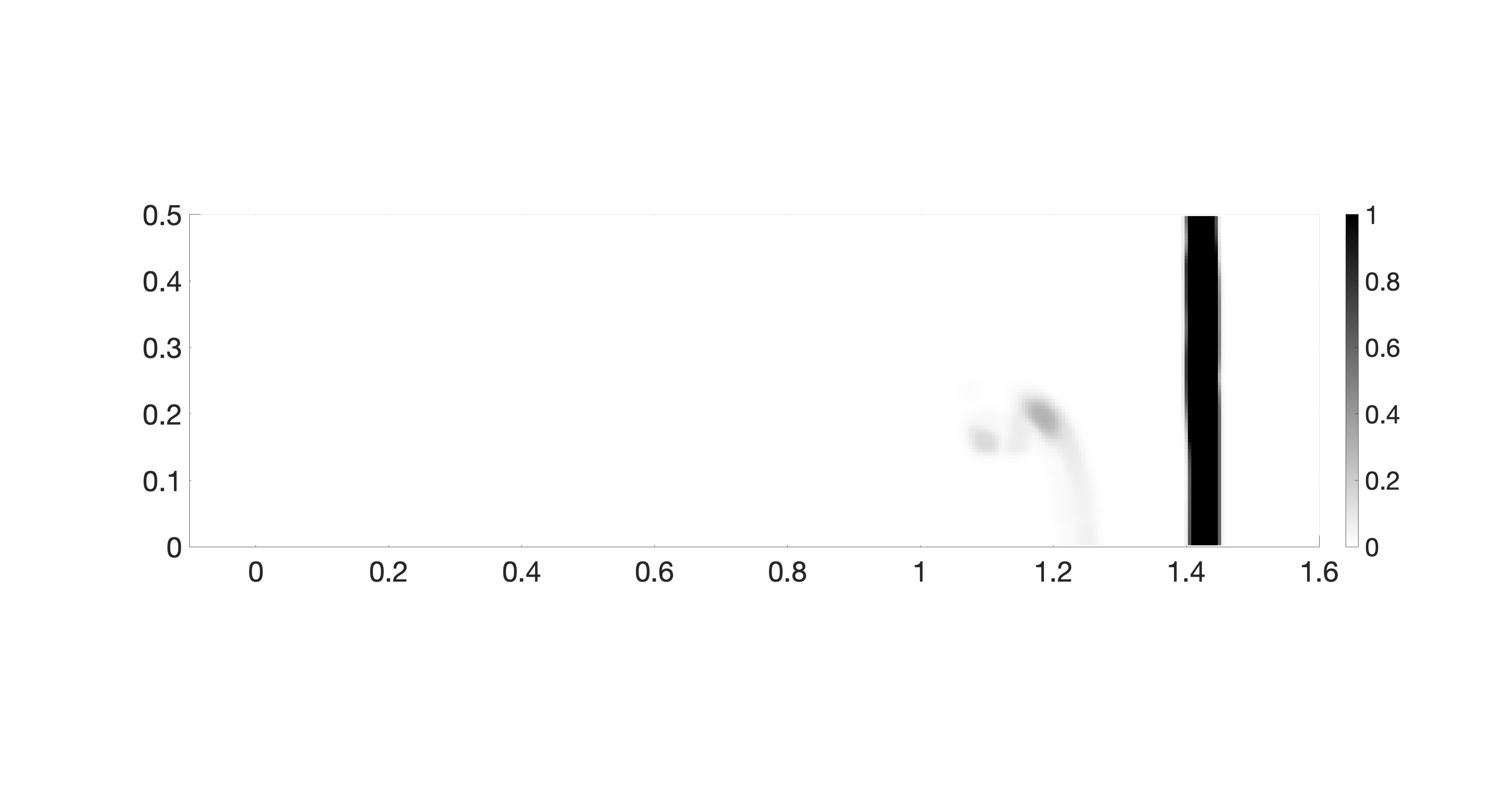}
    \caption{Density (left) and activation factor of viscosity (right) by HV using a $340\times100$ grid.}
    \label{fg:num_2d_sb_hv}
  \end{subfigure}
  \caption{Density contour by the MUSCL scheme with van Albada limiter on two grids (upper row) in comparison with that by the HV scheme on the coarser grid along with the activation factor of the artificial viscosity (lower row).}
  \label{fg:num_2d_sb}
\end{figure}
In the lower row of~\cref{fg:num_2d_sb}, we plot the density at $T=0.4$ computed by the proposed HV method using a $340\times100$ grid along with the activation factor of the artificial viscosity.
For comparison, we plot the density computed by MUSCL with van Albada limiter on the same grid as well as on a finer grid in the upper row of the same figure.
It is clear that HV resolves the vortex due to the bubble much better than MUSCL on the same grid; and MUSCL produces similar solution as the HV using a twice refined grid and twice the number of unknowns comparing to the latter.

\section{Conclusions}
\label{sec:concl}
We presented an explicit discretization method of hybrid type for Euler equations, that is, it seeks approximations to both nodal solutions and cell-averaged solutions and evolves them in time using the method of lines.
While the hybrid-variable (HV) framework is general, the current work focuses on using the smallest stencil (three neighboring cells in 1D or nine cells in 2D) to achieve optimal fourth-order accuracy, utilizing the inherent superconvergence property of the HV methods. 
We proved the convergence of the proposed method for model 1D advection equations given sufficient regularity, and constructed a non-linear artificial viscosity that is based on an entropy residual to capture strong discontinuities that usually appear in the solutions to hyperbolic conservation laws, such as the Euler equations.
Extensive numerical tests are provided to assess the performance of the proposed method, including the handling of various types of boundary conditions, source terms, and the handling of complex discontinuities such as in the challenge KPP problem and shock-bubble interaction.

\section*{Acknowledgements}
The author is supported by the U.S. National Science Foundation (NSF) under Grant DMS-2302080.

% BibTeX users please use one of
\bibliographystyle{plain}      % mathematics and physical sciences
\bibliography{pap}

\begin{thebibliography}{10}

\bibitem{RAbgrall:2019a}
R\'{e}mi Abgrall, Paola Bacigaluppi, and Svetlana Tokareva.
\newblock High-order residual distribution scheme for the time-dependent
  {E}uler equations of fluid dynamics.
\newblock {\em Comput. Methods Appl. Mech. Eng.}, 78(2):274--297, July 2019.

\bibitem{DAppelo:2012a}
Daniel Appel\"{o} and Thomas Hagstrom.
\newblock On advection by {H}ermite methods.
\newblock {\em Pacific Journal of Applied Mathematics}, 4(2):125, April 2012.

\bibitem{JPBoris:1976a}
J.~P. Boris and D.~L. Book.
\newblock Flux-corrected transport. {III}. {M}inimal-error {FCT} algorithms.
\newblock {\em J. Comput. Phys.}, 20(4):397--431, April 1976.

\bibitem{BCockburn:1998a}
Bernado Cockburn and Chi-Wang Shu.
\newblock The {R}unge-{K}utta discontinuous {G}alerkin method of conservation
  laws {V}: {M}ultidimensional systems.
\newblock {\em J. Comput. Phys.}, 141(2):199--224, April 1998.

\bibitem{EFrank:1947a}
Evelyn Frank.
\newblock On the real parts of the zeros of complex polynomials and
  applications to continued fraction expansions of analytic functions.
\newblock {\em T. Am. Math. Soc.}, 62(2):272--283, September 1947.

\bibitem{JGoodrich:2006a}
John Goodrich, Thomas Hagstrom, and Jens Lorenz.
\newblock Hermite methods for hyperbolic initial-boundary value problems.
\newblock {\em Math. Comput.}, 75(254):595--630, 2006.

\bibitem{JLGuermond:2014a}
Jean-Luc Guermond and Bojan Popov.
\newblock Viscous regularization of the {E}uler equations and entropy
  principles.
\newblock {\em SIAM J. Appl. Math.}, 74(2):284--305, 2014.

\bibitem{JLGuermond:2016a}
Jean-Luc Guermond, Bojan Popov, and Vladimir Tomov.
\newblock Entropy-viscosity method for the single material {E}uler equations in
  {L}agrangian frame.
\newblock {\em Comput. Methods Appl. Mech. Eng.}, 300:402--426, March 2016.

\bibitem{EHairer:1993a}
Ernst Hairer, Syvert~P. N{\o}rsett, and Gerhard Wanner.
\newblock {\em Solving Ordinary Differential Equations I: Nonstiff Problems},
  volume~8 of {\em Springer Series in Computational Mathematics}.
\newblock Springer, 2nd edition, 1993.

\bibitem{MMHasan:2023a}
Md~Mahmudul Hasan and Xianyi Zeng.
\newblock A central compact hybrid-variable method with spectral-like
  resolution: {O}ne-dimensional case.
\newblock {\em J. Comput. Appl. Math.}, 421:114894, March 2023.

\bibitem{GSJiang:1996a}
Guang-Shan Jiang and Chi-Wang Shu.
\newblock Efficient implementation of weighted {ENO} schemes.
\newblock {\em J. Comput. Phys.}, 126(1):202--228, June 1996.

\bibitem{CJohnson:1990a}
Claes Johnson, Anders Szepessy, and Peter Hansbo.
\newblock On the convergence of shock-capturing streamline diffusion finite
  element methods for hyperbolic conservation laws.
\newblock {\em Math. Comput.}, 54(189):107--129, January 1990.

\bibitem{AKurganov:2007a}
Alexander Kurganov, Guergana Petrova, and Bojan Popov.
\newblock Adaptive semidiscrete central-upwind schemes for nonconvex hyperbolic
  conservation laws.
\newblock {\em SIAM J. Sci. Comput.}, 29(6):2381--2401, 2007.

\bibitem{JVonNeumann:1950a}
J.~Von Neumann and R.~D. Richtmyer.
\newblock A method for the numerical calculation of hydrodynamic shocks.
\newblock {\em J. Appl. Phys.}, 21(3):232--237, March 1950.

\bibitem{NCNguyen:2012a}
N.~C. Nguyen and J.~Peraire.
\newblock Hybridizable discontinuous {G}alerkin methods for partial
  differential equations in continuum mechanics.
\newblock {\em J. Comput. Phys.}, 231(18):5955--5988, September 2012.

\bibitem{PLRoe:1981a}
P.~L. Roe.
\newblock Approximate {R}iemann solvers, parameter vectors, and difference
  schemes.
\newblock {\em J. Comput. Phys.}, 43(2):357--372, October 1981.

\bibitem{PLRoe:1986b}
P.~L. Roe.
\newblock Characteristic-based schemes for the {E}uler equations.
\newblock {\em Annu. Rev. Fluid Mech.}, 18:337--365, 1986.

\bibitem{VVRusanov:1962a}
V.~V. Rusanov.
\newblock The calculation of interaction of non-steady shock waves with
  obstacles.
\newblock {\em USSR Comput. Math. Math. Phys.}, 1(2):304--320, 1962.

\bibitem{FShakib:1991a}
Farzin Shakib, Thomas J.~R. Hughes, and Zden\u{e}k Johan.
\newblock A new finite element formulation for computational fluid dynamics: X.
  {T}he compressible {E}uler and {N}avier-{S}tokes equations.
\newblock {\em Comput. Methods Appl. Mech. Eng.}, 89(1--3):141--219, August
  1991.

\bibitem{CWShu:1998a}
Chi-Wang Shu.
\newblock Essentially non-oscillatory and weighted essentially non-oscillatory
  schemes for hyperbolic conservation laws.
\newblock In Alfio Quarteroni, editor, {\em Advanced Numerical Approximation of
  Nonlinear Hyperbolic Equations}, volume 1697 of {\em Lecture Notes in
  Mathematics}, pages 325--432. Springer Berlin Heidelberg, 1998.

\bibitem{GSod:1978a}
Gary~A. Sod.
\newblock A survey of several finite difference methods for systems of
  nonlinear hyperbolic conservation laws.
\newblock {\em J. Comput. Phys.}, 27(1):1--31, April 1978.

\bibitem{VStiernstrom:2021a}
Vidar Stiernstr\"{o}m, Lukas Lundgren, Murtazo Nazarov, and Ken Mattsson.
\newblock A residual-based artificial viscosity finite difference method for
  scalar conservation laws.
\newblock {\em J. Comput. Phys.}, 430:110100, April 2021.

\bibitem{GDvanAlbada:1982a}
G.~D. van Albada, B.~van Leer, and Jr. W.~W.~Roberts.
\newblock A comparative study of computational methods in cosmic gas dynamics.
\newblock {\em Astron. Astrophys.}, 108(1):76--84, April 1982.

\bibitem{BvanLeer:1979a}
Bram van Leer.
\newblock Towards the ultimate conservative difference scheme {V}. {A}
  second-order sequel to {G}odunov's method.
\newblock {\em J. Comput. Phys.}, 32(1):101--136, July 1979.

\bibitem{MCada:2009a}
Miroslav \v{C}ada and Manuel Torrilhon.
\newblock Compact third-order limiter functions for finite volume methods.
\newblock {\em J. Comput. Phys.}, 228(11):4118--4145, June 2009.

\bibitem{STZalesak:1979a}
Steven~T. Zalesak.
\newblock Fully multidimensional flux-corrected transport algorithm for fluids.
\newblock {\em J. Comput. Phys.}, 31(3):335--362, June 1979.

\bibitem{XZeng:2014a}
Xianyi Zeng.
\newblock A high-order hybrid finite difference-finite volume approach with
  application to inviscid compressible flow problems: {A} preliminary study.
\newblock {\em Comput. Fluids}, 98:91--110, July 2014.

\bibitem{XZeng:2019a}
Xianyi Zeng.
\newblock Linear hybrid-variable methods for advection equations.
\newblock {\em Adv. Comput. Math.}, 45(2):929--980, 2019.

\end{thebibliography}

%\appendix

\end{document}